\theoremstyle{plain}
\newtheorem{thm}{Theorem}[section]
\newtheorem{cor}[thm]{Corollary}
\newtheorem{lem}[thm]{Lemma}
\newtheorem{prob}[thm]{Problem}
\newtheorem{rem}[thm]{Remark}
\newtheorem{ques}[thm]{Question}
\newtheorem{conj}[thm]{Conjecture}
\def\bbb{\mathbb}
\def\op{\operatorname}
\renewcommand{\phi}{\varphi}
\newcommand{\N}{\bbb{N}}
\newcommand{\Z}{\bbb{Z}}
\newcommand{\F}{\bbb{F}}
\newcommand{\C}{\bbb{C}}
\newcommand{\eps}{\varepsilon}
\newcommand{\Mod}[1]{\ (\mathrm{mod}\ #1)}
\begin{document}

\title[On $p$-adic valuations of $m$ colored $p$-ary partition functions]{On $p$-adic valuations of certain $m$ colored $p$-ary partition functions}
\author{Maciej Ulas and B{\l}a\.{z}ej \.{Z}mija}

\keywords{partitions into powers of $p$, power series, $p$-adic valuation, congruences} \subjclass[2010]{11P81, 11P83, 11B50}
\thanks{During the preparation of the work, the second author was a scholarship holder of the Kartezjusz program funded by the Polish National Centre for Research and Development.}

\begin{abstract}
Let $k\in\N_{\geq 2}$ and for given $m\in\Z\setminus\{0\}$ consider the sequence $(S_{k,m}(n))_{n\in\N}$ defined by the power series expansion
$$
\frac{1}{(1-x)^{m}}\prod_{i=0}^{\infty}\frac{1}{(1-x^{k^{i}})^{m}}=\sum_{n=0}^{\infty}S_{k,m}(n)x^{n}.
$$
The number $S_{k,m}(n)$ for $m\in\N_{+}$ has a natural combinatorial interpretation: it counts the number of representations of $n$ as sums of powers of $k$, where the part equal to $1$ takes one among $mk$ colors and each part $>1$ takes $m(k-1)$ colors. We concentrate on the case when $k=p\in\mathbb{P}$. Our main result is the computation of the exact value of the $p$-adic valuation of $S_{p,m}(n)$. In particular, in each case the set of values of $\nu_{p}(S_{p,m}(n))$ is finite and the maximum value is bounded by $\op{max}\{\nu_{p}(m)+1,\nu_{p}(m+1)+1\}$. Our results can be seen as a generalization of earlier work of Churchhouse and recent work of Gawron, Miska and Ulas, and the present authors.

\end{abstract}

\maketitle

\section{Introduction}\label{sec1}

Theory of partitions is a classical part of number theory. Main arithmetical questions concerning counting functions of various type of partitions are related to divisibility properties and $p$-adic behaviour for various values of $p\in\mathbb{P}$. Particular partition functions are connected with $k$-ary representations of integers. Let us recall that by a $k$-ary partition of an integer $n$ we understand the representation of $n$ into powers of $k$, i.e., the representation of the form
\begin{equation}\label{prep}
n=\sum_{i=0}^{u}k^{s_{i}},
\end{equation}
where $s_{i}\in\N$ for $i=0,\ldots, u$ and $s_{0}\leq s_{1}\leq \ldots\leq s_{u}$. By a colored $k$-ary representation we understand the $k$-ary representation, where parts take colors from finite sets.

The basic $k$-ary partition function is related to the case of $k=2$. Then, the counting function $b(n)$ of representations of $n$ in the form (\ref{prep}) is called {\it the binary partition function}. The sequence $(b(n))_{n\in\N}$ was introduced by Euler, who also noticed the identity
$$
B(x)=\prod_{n=0}^{\infty}\frac{1}{1-x^{2^{n}}}=\sum_{n=0}^{\infty}b(n)x^{n}.
$$
The series $B$ satisfies the functional equation $(1-x)B(x)=B(x^2)$. In consequence, by comparing the coefficients near $x^n$ on the both sides of the functional relation, we see that the sequence $(b(n))_{n\in\N}$ satisfies: $b(0)=b(1)=1$ and
\begin{equation*}
b(2n)=b(2n-1)+b(n),\quad b(2n+1)=b(2n).
\end{equation*}
However, it seems that the first non-trivial result concerning the sequence $(b(n))_{n\in\N}$ was obtained by Churchhouse in \cite{Chu}. In order to state his result, let $p\in\mathbb{P}$ be given and recall that the $p$-adic valuation of an integer $n$, denoted by $\nu_{p}(n)$, is just the highest power of $p$ dividing $n$, i.e.,
$$
\nu_{p}(n):=\op{max}\{k\in\N:\;p^{k}\mid n\}
$$
with the convention that $\nu_{p}(0)=+\infty$. It is clear that the $p$-adic valuation satisfies the properties: $\nu_{p}(n_{1}n_{2})=\nu_{p}(n_{1})+\nu_{p}(n_{2})$ and $\nu_{p}(n_{1}+n_{2})\geq \op{min}\{\nu_{p}(n_{1}),\nu_{p}(n_{2})\}$. Churchhouse proved that the sequence of the $2$-adic valuations of $(b(n))_{n\in\N}$ is bounded by 2. More precisely, if $n\geq 2$, we have $\nu_{2}(b(n))=2$ if and only if $n$ or $n-1$ can be written in the form $4^r(2u+1)$ for some $r\in\N_{+}$ and $u\in\N$. In the remaining cases we have $\nu_{2}(b(n))=1$. One can observe that we can shortly write
$$
\nu_{2}(b(n))=
\begin{cases}
\frac{1}{2}|t_{n}-2t_{n-1}+t_{n-2}|,& \mbox{if }\;n\geq 2\\
0,& \mbox{if }\;n\in\{0,1\}
\end{cases},
$$
where $(t_{n})_{n\in\N}$ is the Prouhet-Thue-Morse sequence (the PTM sequence for short). The PTM sequence can be defined in many equivalent ways but the most natural in the context of binary partitions is the one through generating functions. More precisely, we have
$$
T(x)=\frac{1}{B(x)}=\prod_{n=0}^{\infty}\left(1-x^{2^{n}}\right)=\sum_{n=0}^{\infty}t_{n}x^{n}.
$$
As an immediate consequence of the functional relation $T(x)=(1-x)T(x^2)$ we get the recurrence relations
$$
t_{0}=1, \quad t_{2n}=t_{n}, \quad t_{2n+1}=-t_{n}.
$$
The PTM sequence appears in many different areas of mathematics (and even physics). An interesting introduction to the properties of this sequence can be found in \cite{AllSh}. We will encounter this sequence in the sequel one more time.

In the recent paper \cite{GMU} Gawron, Miska and the first author generalized the result of Churchhouse in the following way. Let $m\in\N_{+}$ and consider the sequence $(b_{m}(n))_{n\in\N}$, where
$$
B(x)^{m}=\prod_{n=0}^{\infty}\frac{1}{\left(1-x^{2^{n}}\right)^{m}}=\sum_{n=0}^{\infty}b_{m}(n)x^{n}.
$$
The sequence $(b_{m}(n))_{n\in\N}$ has a natural combinatorial interpretation. More precisely, the number $b_m(n)$ counts the number of binary representations (\ref{prep}), where each part has one among $m$ colors. In the cited paper it was proved that the sequence  $(\nu_{2}(b_{m}(n)))_{n\in\N}$, with $m=2^{k}-1, k\in\N_{+}$, is bounded by 2 (and independent of $k$ - a rather unexpected result). This result was further generalized for $p$-ary colored partitions with $p\in\mathbb{P}_{\geq 3}$ in \cite{Ula}. Indeed, let $A_{p,m}(n)$ denotes the number of $p$-ary representations (\ref{prep}) of the number $n$, where each summand has one among $m$ colors. We proved that for each $p\in\mathbb{P}_{\geq 3}$ and $s\in\N_{+}$, the $p$-adic valuation of the number $A_{p,(p-1)(p^s-1)}(n)$ is equal to 1 for $n\geq p^s$. We also obtained some results concerning the behaviour of the sequence $(\nu_{p}(A_{p,(p-1)(up^s-1)}(n)))_{n\in\N}$ for fixed $u\in\{2,\ldots,p-1\}$ and $p\geq 3$. Let us note that the exponent $(p-1)(p^s-1)$ for $p=2$ reduces to $2^s-1$ considered in \cite{GMU} (in particular $b_{2^{s}-1}(n)=A_{2,2^s-1}(n)$ for all $n\in\N$). This shows striking difference in behaviour of the $p$-adic valuations of $A_{p,(p-1)(p^s-1)}(n)$ between the case of $p=2$ and $p\in\mathbb{P}_{\geq 3}$.

In the light of the cited results a natural question arises whether there is a $p$-ary partition type function such that its colored version has a bounded $p$-adic valuation for each fixed number of colors (not only of particular forms like in the cases considered in \cite{GMU, Ula}). The aim of this paper is to present such a function and obtain exact expression for the corresponding sequence of $p$-adic valuations.

For $k\in\N_{\geq 2}$ we consider the power series expansion
$$
G_{k}(x)=\frac{1}{1-x}\prod_{n=0}^{\infty}\frac{1}{(1-x^{k^{n}})^{k-1}}=\sum_{n=0}^{\infty}S_{k}(n)x^{n}.
$$
Then, for $m\in\Z\setminus\{0\}$ we consider the power series
$$
H_{k,m}(x)=G_{k}(x)^{m}=\frac{1}{(1-x)^{m}}\prod_{n=0}^{\infty}\frac{1}{(1-x^{k^{n}})^{m(k-1)}}=\sum_{n=0}^{\infty}S_{k,m}(n)x^{n},
$$
and the sequence $(S_{k,m}(n))_{n\in\N}$. Similarly, as in the previous cases the coefficient $S_{k,m}(n)$ for $m\in\N_{+}$, has a natural combinatorial interpretation. The number $S_{k,m}(n)$ counts the number of $k$-ary representations (\ref{prep}), such that the part equal to 1 takes one among $km$ colors and each part $>1$ takes one among $(k-1)m$ colors. In the paper we are concentrate on the case $k=p\in\mathbb{P}$. As we will see, it is possible to compute the expression for $\nu_{p}(S_{p,m}(n))$ for each $p\in\mathbb{P}$, $n\in\N$ and $m\in\Z$, explicitly. Let us note that in the case $m<0$ we are working with the sequence with varying signs and it is a non-trivial question whether the number $S_{p,m}(n)$ can be zero. Unexpectedly, we will show that this is not the case. This non-obvious observation will be consequence of our main results. We should also mention that as in previous works the case of $p=2$ and $p\in\mathbb{P}_{\geq 3}$ are quite different. Thus, we introduce the following definitions with the aim to simplify the notation a bit. More precisely, we write
$$
c_{m}(n):=S_{2,m}(n),\quad\quad d_{m}(n):=S_{p,m}(n)\quad\mbox{for fixed}\quad p\in\mathbb{P}_{\geq 3}.
$$
Let us note that we have the equality $c_{1}(n)=b(2n)$ which is a consequence of the equalities
$$
\sum_{n=0}^{\infty}b(2n)x^{n}=\frac{1}{2}(B(\sqrt{x})+B(-\sqrt{x}))=\frac{1}{1-x}\prod_{n=0}^{\infty}\frac{1}{1-x^{2^{n}}}=H_{2,1}(x).
$$
We are in position to state the content of the paper in some details.

In Section \ref{sec2} we consider the case $p=2$ and obtain the closed formula for $\nu_{2}(c_{m}(n))$ for any given $m\in\Z$. The formula is stated in Theorem \ref{mainthm} and implies that $\nu_{2}(c_{m}(n))$ is constant for even $m$ and positive $n$. In the case of odd $m$ the value of $\nu_{2}(c_{m}(n))$ is equal to 1 or $\nu_{2}(m+1)+1$, depending on whether $t_{n}\neq t_{n-1}$ or $t_{n}=t_{n-1}$, respectively.

The next two sections are devoted to the proof of the equality $\nu_{p}(d_{m}(n))=\nu_{p}(m)+1$ which holds for $n\in\N_{+}$ and each $m\in\Z\setminus\{0\}$ (Theorem \ref{parythm1} and Theorem \ref{parythm2}). According to our best knowledge our results are the first concerning the exact computation of the $p$-adic valuations of $m$ colored partitions (with non-rational generating function of course). Moreover, our results can be seen as natural generalizations of Churchhouse result. As a by-product of the employed methods we also get an unexpected fact that the sequence $\left(\frac{d_{m}(n)}{pm}\Mod{p}\right)_{n\in\N_{+}}$ is independent on $m$. Thus, a more detailed analysis of this sequence is performed in Section \ref{sec4}. The main result of this section is the proof of $p$-automaticity of the sequence $\left(\frac{d_{m}(n)}{pm}\Mod{p}\right)_{n\in\N_{+}}$ (Theorem \ref{automaticthm1}) and transcendence of the corresponding ordinary generating function (Theorem \ref{transres}).

In Section \ref{sec5} we present study of the $p$-adic behaviour of the difference $d_{m}(pn)-d_{m}(n)$. In particular we prove sufficient conditions for the inequality $\nu_{p}(d_{m}(pn)-d_{m}(n))\geq \nu_{p}(m)+3$ to hold (Theorem \ref{parythm3}).

Finally, in the last section we ask for a possible generalization of our results, present effects of some numerical computations and formulate some questions, problems and conjectures.

\section{The case of $p=2$}\label{sec2}

In order to simplify the notation a bit let us put
$$
C(x):=G_{2}(x),\quad C_{m}(x):=G_{2}(x)^{m}.
$$
Let us observe the power series $C(x)$ satisfies the functional relation $(1-x)C(x)=(1+x)C(x^2)$ . In consequence, for $m\in\Z$ we have the relation
$$
C_{m}(x)=\left(\frac{1+x}{1-x}\right)^{m}C_{m}(x^2),
$$
which will be useful in the proof of Lemma \ref{plusminus2} below. Moreover, in the sequel we will need the following functional property: for $m_{1}, m_{2}\in\Z$ we have
$$
C_{m_{1}}(x)C_{m_{2}}(x)=C_{m_{1}+m_{2}}(x).
$$

We start our investigations with a simple lemma which is a consequence of the result of Churchhouse and the product form of the series $C_{-1}(x)$.

\begin{lem}\label{plusminus1}
For $n\in\N_{+}$, we have the following equalities:
\begin{align*}
\nu_{2}(c_{1}(n)) =&\frac{1}{2}|t_{n}+3t_{n-1}|,\\
\nu_{2}(c_{-1}(n))=&\begin{cases}\begin{array}{lll}
                                   1,       &  & \mbox{if }\;t_{n}\neq t_{n-1} \\
                                   +\infty, &  & \mbox{if }\;t_{n}=t_{n-1}
                                 \end{array}
\end{cases}.
\end{align*}
\end{lem}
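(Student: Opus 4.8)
The plan is to treat the two equalities separately, reducing each to the explicit Prouhet--Thue--Morse formulas recalled in the introduction; the lemma is essentially a repackaging of Churchhouse's result together with the product form of $C_{-1}(x)$.

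First I would handle $\nu_{2}(c_{1}(n))$. The starting point is the identity $c_{1}(n)=b(2n)$, which was established in the introduction from the equality $H_{2,1}(x)=\frac{1}{2}(B(\sqrt{x})+B(-\sqrt{x}))$. Churchhouse's formula then gives, for $2n\geq 2$ (equivalently $n\geq 1$),
$$
\nu_{2}(b(2n))=\tfrac{1}{2}|t_{2n}-2t_{2n-1}+t_{2n-2}|.
$$
Next I would invoke the PTM recurrences $t_{2k}=t_{k}$ and $t_{2k+1}=-t_{k}$ to rewrite $t_{2n}=t_{n}$, $t_{2n-2}=t_{n-1}$ and $t_{2n-1}=-t_{n-1}$. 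Substituting and collecting terms yields
$$
t_{2n}-2t_{2n-1}+t_{2n-2}=t_{n}+2t_{n-1}+t_{n-1}=t_{n}+3t_{n-1},
$$
whence $\nu_{2}(c_{1}(n))=\frac{1}{2}|t_{n}+3t_{n-1}|$, as claimed.

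For $\nu_{2}(c_{-1}(n))$ I would exploit the product form directly, bypassing Churchhouse. Since
$$
C_{-1}(x)=C(x)^{-1}=(1-x)\prod_{n=0}^{\infty}\left(1-x^{2^{n}}\right)=(1-x)T(x),
$$
extracting the coefficient of $x^{n}$ from $T(x)=\sum_{n=0}^{\infty}t_{n}x^{n}$ gives $c_{-1}(n)=t_{n}-t_{n-1}$ for $n\in\N_{+}$. Because $t_{n},t_{n-1}\in\{-1,1\}$, this difference is either $0$ (when $t_{n}=t_{n-1}$) or $\pm 2$ (when $t_{n}\neq t_{n-1}$). In the former case $c_{-1}(n)=0$, so $\nu_{2}(c_{-1}(n))=+\infty$; in the latter $\nu_{2}(\pm 2)=1$. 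This reproduces exactly the stated case distinction.

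There is no genuine obstacle here: both halves are short consequences of the material recalled in the introduction, and the computation becomes entirely mechanical once the PTM recurrences are in place. The one point that must not be overlooked is that the formula for $\nu_{2}(c_{1}(n))$ is asserted only for $n\geq 1$, which is precisely the range in which the doubled argument $2n$ satisfies the hypothesis $2n\geq 2$ needed to apply Churchhouse's result.
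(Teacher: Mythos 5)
Your proposal is correct and follows exactly the paper's own argument: the first equality via $c_{1}(n)=b(2n)$, Churchhouse's formula and the PTM recurrences $t_{2n}=t_{n}$, $t_{2n+1}=-t_{n}$, and the second via the expansion $C_{-1}(x)=(1-x)T(x)$ giving $c_{-1}(n)=t_{n}-t_{n-1}$. You merely spell out the substitution $t_{2n}-2t_{2n-1}+t_{2n-2}=t_{n}+3t_{n-1}$ that the paper leaves implicit.
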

\begin{proof}
The first equality is an immediate consequence of the equalities $c_{
1}(n)=b(2n), \nu_{2}(b(n))=\frac{1}{2}|t_{n}-2t_{n-1}+t_{n-2}|$ and the recurrence relations satisfied by the PTM sequence $(t_{n})_{n\in\N}$, i.e., $t_{2n}=t_{n}, t_{2n+1}=-t_{n}$.

The second equality comes from the expansion
$$
C_{-1}(x)=(1-x)\prod_{n=0}^{\infty}(1-x^{2^{n}})=(1-x)\sum_{n=0}^{\infty}t_{n}x^{n}=1+\sum_{n=1}^{\infty}(t_{n}-t_{n-1})x^{n}.
$$
\end{proof}

In order to compute the 2-adic valuations of the sequence $(c_{\pm 2}(n))_{n\in\N}$ we need the following simple

\begin{lem}\label{plusminus2}
The sequences $(c_{\pm 2}(n))_{n\in\N}$ satisfy the following recurrence relations: $c_{\pm 2}(0)=1, c_{\pm 2}(1)=\pm 4$ and for $n\geq 1$ we have
\begin{align*}
c_{\pm 2}(2n)  =&\pm 2c_{\pm 2}(2n-1)-c_{\pm 2}(2n-2)+c_{\pm 2}(n)+c_{\pm 2}(n-1),\\
c_{\pm 2}(2n+1)=&\pm 2c_{\pm 2}(2n)-c_{\pm 2}(2n-1)\pm 2c_{\pm 2}(n).
\end{align*}
\end{lem}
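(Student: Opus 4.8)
The plan is to extract both recurrences at once from the functional relation $C_{m}(x)=\left(\frac{1+x}{1-x}\right)^{m}C_{m}(x^{2})$ recorded above, specialized to $m=\pm 2$. Clearing denominators (which is legitimate since $1-x$ and $1+x$ are units in $\Z[[x]]$), the relation becomes the single polynomial identity
$$
(1\mp 2x+x^{2})C_{\pm 2}(x)=(1\pm 2x+x^{2})C_{\pm 2}(x^{2}),
$$
where the upper sign corresponds to $m=2$ (so that the left factor is $(1-x)^{2}$ and the right factor is $(1+x)^{2}$) and the lower sign corresponds to $m=-2$. Everything that follows is a comparison of coefficients in this formal power series identity, so no convergence issues arise.

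Next I would compare the coefficient of $x^{N}$ on both sides. On the left it is $c_{\pm 2}(N)\mp 2c_{\pm 2}(N-1)+c_{\pm 2}(N-2)$. On the right, since $C_{\pm 2}(x^{2})$ contains only even powers of $x$, the terms $1$ and $x^{2}$ contribute to even $N$ and the term $\pm 2x$ contributes only to odd $N$. Writing $N=2n$ yields
$$
c_{\pm 2}(2n)\mp 2c_{\pm 2}(2n-1)+c_{\pm 2}(2n-2)=c_{\pm 2}(n)+c_{\pm 2}(n-1),
$$
which rearranges to the first asserted recurrence, while writing $N=2n+1$ yields
$$
c_{\pm 2}(2n+1)\mp 2c_{\pm 2}(2n)+c_{\pm 2}(2n-1)=\pm 2c_{\pm 2}(n),
$$
which rearranges to the second. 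Both identities hold for $n\geq 1$, where no negative index appears on the right.

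Finally, the initial conditions $c_{\pm 2}(0)=1$ and $c_{\pm 2}(1)=\pm 4$, which the recurrences (valid only for $n\geq 1$) do not by themselves determine, follow from the expansion $C(x)=1+2x+O(x^{2})$: raising to the power $\pm 2$ gives $C_{\pm 2}(x)=(1+2x+O(x^{2}))^{\pm 2}=1\pm 4x+O(x^{2})$. I do not expect a genuine obstacle here; the only real work is the careful bookkeeping of the signs $\pm,\mp$ and of the index shifts in the even/odd split, together with checking that the displayed polynomial identity is indeed the correct consequence of the functional relation for each choice of sign.
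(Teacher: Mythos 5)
Your proposal is correct and matches the paper's own proof: both rewrite the functional equation $C_{\pm 2}(x)=\left(\frac{1+x}{1-x}\right)^{\pm 2}C_{\pm 2}(x^{2})$ as the polynomial identity $(1\mp x)^{2}C_{\pm 2}(x)=(1\pm x)^{2}C_{\pm 2}(x^{2})$ and compare coefficients, splitting into even and odd index. Your write-up simply carries out the coefficient bookkeeping (and the initial values) more explicitly than the paper does.
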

\begin{proof}
The recurrence relations for the sequence $(c_{\pm 2}(n))_{n\in\N}$ are immediate consequence of the functional equation $C_{\pm 2}(x)=\left(\frac{1+x}{1-x}\right)^{\pm 2}C_{\pm 2}(x^2)$, which can be rewritten in an equivalent form $(1\mp x)^{2}C_{\pm 2}(x)=(1\pm x)^{2}C_{\pm 2}(x^2)$. Comparing now the coefficients on both sides of this relation we get the result.
\end{proof}

As a consequence of the recurrence relations for $(c_{\pm 2}(n))_{n\in\N}$ we get

\begin{cor}\label{plusminus2val}
For $n\in\N_{+}$ we have $c_{\pm 2}(n)\equiv 4\Mod{8}$. In consequence, for $n\in\N_{+}$ we have $\nu_{2}(c_{\pm 2}(n))=2$.
\end{cor}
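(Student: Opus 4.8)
The plan is to prove the statement $c_{\pm2}(n)\equiv 4\Mod 8$ for $n\geq 1$ by strong induction on $n$, using the two recurrence relations established in Lemma \ref{plusminus2}. The base cases $c_{\pm2}(1)=\pm4$ already satisfy $c_{\pm2}(1)\equiv 4\Mod 8$, so I would set up the induction hypothesis that $c_{\pm2}(j)\equiv 4\Mod 8$ for all $j$ with $1\leq j<n$, and then reduce each recurrence modulo $8$.

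For the even-index case, I would take the relation
$$
c_{\pm2}(2n)=\pm 2c_{\pm 2}(2n-1)-c_{\pm 2}(2n-2)+c_{\pm 2}(n)+c_{\pm 2}(n-1)
$$
and work modulo $8$. The two terms $c_{\pm2}(n)+c_{\pm2}(n-1)$ contribute $4+4\equiv 0$ by the hypothesis when both indices are $\geq 1$; the cross term $\pm2c_{\pm2}(2n-1)\equiv\pm2\cdot 4\equiv 0\Mod 8$; and $-c_{\pm2}(2n-2)\equiv -4\equiv 4\Mod 8$. This yields $c_{\pm2}(2n)\equiv 4\Mod 8$. For the odd-index case I would treat
$$
c_{\pm2}(2n+1)=\pm 2c_{\pm 2}(2n)-c_{\pm 2}(2n-1)\pm 2c_{\pm 2}(n)
$$
the same way: $\pm2c_{\pm2}(2n)\equiv 0$, $\pm2c_{\pm2}(n)\equiv 0$, and $-c_{\pm2}(2n-1)\equiv 4$, giving $c_{\pm2}(2n+1)\equiv 4\Mod 8$ as well. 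Since every index $\geq 2$ is either $2n$ or $2n+1$ for some $n\geq 1$, and the value at index $1$ is handled by the base case, the congruence propagates to all $n\in\N_+$.

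The only genuinely delicate point, and thus the main obstacle, is the boundary behavior at small indices where one of the arguments on the right-hand side drops to $0$ and the hypothesis $c_{\pm2}(j)\equiv 4\Mod 8$ does \emph{not} apply, since $c_{\pm2}(0)=1\not\equiv 4\Mod 8$. This happens for $c_{\pm2}(2)$ (the term $c_{\pm2}(n-1)=c_{\pm2}(0)$) and potentially in the term $-c_{\pm2}(2n-2)$ when $n=1$. I would therefore verify the first few values $c_{\pm2}(2)$ and $c_{\pm2}(3)$ directly from the recurrences as explicit base cases, confirming each is $\equiv 4\Mod 8$, and only then start the induction from $n\geq 2$ where all indices appearing on the right are at least $1$. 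Once the congruence $c_{\pm2}(n)\equiv 4\Mod 8$ is secured for $n\in\N_+$, the valuation statement is immediate: $c_{\pm2}(n)=4+8t$ for some integer $t$ means $\nu_2(c_{\pm2}(n))=\nu_2(4(1+2t))=2$, since $1+2t$ is odd. This gives $\nu_2(c_{\pm2}(n))=2$ for all $n\geq 1$.
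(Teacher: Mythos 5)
Your proof is correct and follows essentially the same route as the paper: induction on $n$ via the recurrence relations of Lemma \ref{plusminus2}, with the small indices checked directly as base cases (the paper verifies $c_{\pm 2}(1)=\pm 4$, $c_{-2}(2)=4$, $c_{2}(2)=12$ and then invokes the same induction). Your write-up just makes explicit the boundary issue at index $0$ that the paper leaves implicit.
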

\begin{proof}
The proof relies on a simple induction. Indeed, we have $c_{\pm 2}(1)=\pm 4, c_{-2}(2)=4, c_{2}(2)=12$ and thus our statement folds for $n=1, 2$. Assuming it holds for all integers $\leq n$ and applying the recurrence relations given in Lemma \ref{plusminus2} we get the result.

The second part is an immediate consequence of the obtained congruence.
\end{proof}

\begin{thm}\label{mainthm}
Let $m\in\Z\setminus\{0, -1\}$ and consider the sequence ${\bf c}_{m}=(c_{m}(n))_{n\in\N}$. Then $c_{m}(0)=1$ and for $n\in\N_{+}$ we have
\begin{equation}
\nu_{2}(c_{m}(n))=\begin{cases}\begin{array}{lll}
                                 \nu_{2}(m)+1,              &  & \mbox{if }\;m\equiv 0\Mod{2} \\
                                 1,                         &  & \mbox{if }\;m\equiv 1\Mod{2}\;\mbox{and}\;t_{n}\neq t_{n-1}\\
                                 \nu_{2}(m+1)+1,            &  & \mbox{if }\;m\equiv 1\Mod{2}\;\mbox{and}\;t_{n}=t_{n-1}
                               \end{array}
\end{cases}.
\end{equation}
\end{thm}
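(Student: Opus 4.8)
The plan is to treat the even and odd cases separately, reducing the odd case to the even one through the multiplicative identity $C_{m_{1}}(x)C_{m_{2}}(x)=C_{m_{1}+m_{2}}(x)$. Throughout I would work $2$-adically and exploit the ultrametric inequality in the sharp form: if a finite sum has a unique term of minimal valuation, that valuation equals the valuation of the sum.

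For even $m$ I would write $m=2l$ with $l\in\Z\setminus\{0\}$, so that $C_{m}(x)=C_{2}(x)^{l}$. By Corollary \ref{plusminus2val} we have $c_{2}(n)\equiv 4\Mod{8}$ for $n\geq 1$, hence $C_{2}(x)=1+4E(x)$, where $E(x)=\sum_{n\geq 1}e(n)x^{n}$ has all coefficients $e(n)$ odd. Expanding formally,
$$
C_{m}(x)=(1+4E(x))^{l}=\sum_{i\geq 0}\binom{l}{i}4^{i}E(x)^{i},
$$
which is a legitimate power series for every $l\in\Z$ because $E$ has no constant term. Fixing $n\geq 1$, the coefficient $c_{m}(n)$ is a finite sum over $1\leq i\leq n$. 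The term $i=1$ equals $4le(n)$, of valuation exactly $\nu_{2}(l)+2$. For $i\geq 2$ I would bound the remaining terms using $i\binom{l}{i}=l\binom{l-1}{i-1}$, which yields $\nu_{2}\!\left(\binom{l}{i}\right)\geq \nu_{2}(l)-\nu_{2}(i)$, so each such term has valuation at least $\nu_{2}(l)+2i-\nu_{2}(i)\geq \nu_{2}(l)+3$, since $2i-\nu_{2}(i)\geq 3$ for $i\geq 2$. Thus the linear term is the unique one of minimal valuation and $\nu_{2}(c_{m}(n))=\nu_{2}(l)+2=\nu_{2}(m)+1$.

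For odd $m\neq -1$ the integer $m+1$ is even and nonzero, and I would use $C_{m}(x)=C_{-1}(x)C_{m+1}(x)$. Since $c_{-1}(0)=1$ and $c_{-1}(j)=t_{j}-t_{j-1}$ for $j\geq 1$ (from the proof of Lemma \ref{plusminus1}), convolution gives
$$
c_{m}(n)=c_{m+1}(n)+(t_{n}-t_{n-1})+\sum_{j=1}^{n-1}(t_{j}-t_{j-1})c_{m+1}(n-j).
$$
By the even case $\nu_{2}(c_{m+1}(n))=\nu_{2}(m+1)+1\geq 2$, while $t_{n}-t_{n-1}\in\{-2,0,2\}$ has valuation $1$ when $t_{n}\neq t_{n-1}$ and vanishes otherwise; every summand in the last sum has valuation at least $1+(\nu_{2}(m+1)+1)=\nu_{2}(m+1)+2$. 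Hence if $t_{n}\neq t_{n-1}$ the middle term is the unique one of valuation $1$, so $\nu_{2}(c_{m}(n))=1$, whereas if $t_{n}=t_{n-1}$ the middle term disappears and $c_{m+1}(n)$ is the unique term of minimal valuation, giving $\nu_{2}(c_{m}(n))=\nu_{2}(m+1)+1$.

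I expect the even case to be the main obstacle, specifically controlling all higher-order contributions uniformly when $\nu_{2}(m)$ is large: reducing modulo a fixed power of $2$ determines the valuation only while it stays below that power, so it cannot settle the cases $\nu_{2}(m)\geq 3$. The resolution is the estimate $2i-\nu_{2}(i)\geq 3$ for $i\geq 2$ combined with $\nu_{2}\!\left(\binom{l}{i}\right)\geq \nu_{2}(l)-\nu_{2}(i)$, which forces the linear term $4le(n)$ to dominate for every $l$. An equivalent route would replace this by induction on $\nu_{2}(m)$ via $C_{2m}(x)=C_{m}(x)^{2}$: since the constant valuation $\nu_{2}(m)+1\geq 2$ of $c_{m}$ exceeds $1$, the diagonal contribution $2c_{m}(n)$ dominates the remaining convolution $\sum_{j=1}^{n-1}c_{m}(j)c_{m}(n-j)$, so doubling increments the valuation by exactly one.
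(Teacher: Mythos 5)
Your proof is correct, but it follows a genuinely different route from the paper's. The paper proves the theorem by induction on $|m|$: it first settles the base cases $m=1,\pm 2, -1, -3$, then splits into residues of $m$ modulo $4$ and into signs, using a different two-factor identity in each case ($C_{4k}=C_{2k}^{2}$, $C_{4k+2}=C_{4k}C_{\pm 2}$, $C_{4k+1}=C_{4k}C_{1}$ or $C_{4(k+1)}C_{-3}$, $C_{4k+3}=C_{4(k+1)}C_{-1}$). You instead dispose of \emph{all} even $m=2l$ at once, with no induction and no sign distinction, by writing $C_{m}=(1+4E(x))^{l}$ and invoking the generalized binomial expansion (legitimate over $\Z$ for every $l\in\Z$ since $E$ has no constant term), where the key arithmetic inputs are the identity $i\binom{l}{i}=l\binom{l-1}{i-1}$ — a polynomial identity, hence valid for negative $l$ — giving $\nu_{2}\bigl(\binom{l}{i}\bigr)\geq \nu_{2}(l)-\nu_{2}(i)$, and the elementary bound $2i-\nu_{2}(i)\geq 3$ for $i\geq 2$; then all odd $m\neq -1$ follow uniformly from the single decomposition $C_{m}=C_{-1}C_{m+1}$. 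Both arguments consume the same preliminary facts (Corollary \ref{plusminus2val} for $c_{2}(n)\equiv 4\Mod{8}$ and the expansion $C_{-1}(x)=1+\sum_{n\geq 1}(t_{n}-t_{n-1})x^{n}$), but your version trades the paper's case bookkeeping for one slightly less elementary tool (generalized binomial coefficients with negative upper index), and is shorter and more transparent for it; it even makes Lemma \ref{plusminus1}'s formula for $\nu_{2}(c_{1}(n))$ a corollary rather than an ingredient. One caveat: your closing aside proposing an ``equivalent route'' by induction on $\nu_{2}(m)$ via $C_{2m}=C_{m}^{2}$ is not actually self-contained — its base case $\nu_{2}(m)=1$ (i.e.\ $m\equiv 2\Mod{4}$) cannot be started from $C_{m}=C_{l}^{2}$ with $l$ odd, since there the diagonal term no longer uniquely minimizes the valuation — so that sketch would still need your binomial argument (or the paper's induction) to get off the ground; this does not affect your main proof, which stands on its own.
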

\begin{proof}
First of all, let us note that our theorem is true for $m=1, \pm2$. This is a consequence of Lemma \ref{plusminus1} and Corollary \ref{plusminus2val}. Let $m\in\Z$ and $|m|>2$. Because $c_{m}(0)=1$ and $c_{m}(1)=2m$, our statement is clearly true for $n=0,1$. We can assume that $n\geq 2$.

We start with the case $m=-3$. From the functional relation $C_{-3}(x)=C_{-2}(x)C_{-1}(x)$ we immediately get the identity
$$
c_{-3}(n)=\sum_{i=0}^{n}c_{-1}(i)c_{-2}(n-i)= c_{-2}(n)+t_{n}-t_{n-1}+ \sum_{i=1}^{n-1}(t_{i}-t_{i-1})c_{-2}(n-i).
$$
Let us observe that for $i\in\{1,\ldots, n-1\}$, from Lemma \ref{plusminus1} and Corollary \ref{plusminus2val}, we obtain the inequality
$$
\nu_{2}((t_{i}-t_{i-1})c_{-2}(n-i))\geq 3.
$$
In consequence, from Lemma \ref{plusminus2}, we get
$$
c_{-3}(n)\equiv c_{-2}(n)+t_{n}-t_{n-1}\equiv 4+t_{n}-t_{n-1}\Mod{8}.
$$
It is clear that $4+t_{n}-t_{n-1}\not\equiv 0\Mod{8}$. Thus, we get the equality $\nu_{2}(c_{-3}(n))=\nu_{2}(4+t_{n}-t_{n-1})$ and the result follows for $m=-3$.

We are ready to prove the general result. At first, we prove it for even numbers $m$. We proceed by induction on $|m|$. 
As we have already proved, our theorem is true for $m=\pm 2$. 
Let us assume that it is true for each $m$ satisfying $|m|<2M$. Let $|m|= 2M$ and write $m=4k+r$ for some $k\in\Z$ and $r\in\{0,2\}$.

If $m=4k$, then from the identity $C_{4k}(x)=C_{2k}(x)^2$ we get the expression
$$
c_{4k}(n)=2c_{2k}(n)+\sum_{i=1}^{n-1}c_{2k}(i)c_{2k}(n-i).
$$
From the induction hypothesis (notice, that $|4k|>|2k|$) we have $\nu_{2}(c_{2k}(i)c_{2k}(n-i))=2(\nu_{2}(2k)+1)>\nu_{2}(2c_{2k}(n))=\nu_{2}(2k)+2$. In consequence
$\nu_{2}(c_{m}(n))=\nu_{2}(c_{4k}(n))=\nu_{2}(2c_{2k}(n))=\nu_{2}(2k)+2=\nu_{2}(4k)+1$. The obtained equality finishes the proof in the case $m\equiv 0\Mod{4}$.

Similarly, if $m=4k+2$ is positive, we use the identity $C_{4k+2}(x)=C_{4k}(x)C_{2}(x)$, and get
$$
c_{4k+2}(n)=c_{2}(n)+c_{4k}(n)+\sum_{i=1}^{n-1}c_{4k}(i)c_{2}(n-i).
$$
Observe, that $|4k+2|>|4k|$. From the equalities $\nu_{2}(c_{2}(n))=\nu_{2}(2)+1$ and $\nu_{2}(c_{4k}(n))=\nu_{2}(4k)+1, n\in\N_{+}$, we get  $\nu_{2}(c_{4k}(i)c_{2}(n-i))=\nu_{2}(k)+5$ for each $i\in\{1,\ldots,n-1\}$. Thus $\nu_{2}(c_{2}(n)+c_{4k}(n))=\nu_{2}(c_{2}(n))=2=\nu_{2}(4k+2)+1$.

If $m=4k+2$ is negative, we use the identity $C_{4k+2}(x)=C_{4(k+1)}(x)C_{-2}(x)$ and proceed in exactly the same way (here we can use the induction hypothesis because $|4k+2|>|4(k+1)|$ since $k<0$).

Thus we can conclude, that the statement is true for all even numbers $m$. We go to odd ones. We have already showed the theorem for $m=-1$ and $m=-3$. Similarly as before, let us write $m=4k+r$ for some $k\in\Z$ and $r\in\{1,3\}$.

If $m=4k+1>0$, then we use the identity $C_{4k+1}(x)=C_{4k}(x)C_{1}(x)$ and get
$$
c_{4k+1}(n)=c_{4k}(n)+c_{1}(n)+\sum_{i=1}^{n-1}c_{4k}(i)c_{1}(n-i).
$$
From the even case we have $\nu_{2}(c_{4k}(i)c_{1}(n-i))\geq \nu_{2}(4k)+2\geq 4$. Moreover, for $n\in\N_{+}$ we have $\nu_{2}(c_{1}(n))\in\{1,2\}$. Thus
\begin{equation*}
\nu_{2}(c_{4k}(n)+c_{1}(n))=\nu_{2}(c_{1}(n))=\begin{cases}\begin{array}{lll}
                                 1,            &  & \mbox{if }\;t_{n}\neq t_{n-1}\\
                                 2,            &  & \mbox{if }\;t_{n}=t_{n-1}
                               \end{array}
                               \end{cases}.
\end{equation*}
as we claimed.

If $m=4k+1<0$, we write $m=4(k+1)-3$ and use the identity $C_{4k+1}(x)=C_{4(k+1)}(x)C_{-3}(x)$. Next, using the obtained expression for $\nu_{2}(c_{-3}(n))$ and $\nu_{2}(c_{4(k+1)}(n))$ and the same reasoning as in the positive case we get the result.

If $m=4k+3>0$ we use the identity $C_{4k+3}(x)=C_{4(k+1)}(x)C_{-1}(x)$ which leads us to the expression
$$
c_{4k+3}(n)=c_{4(k+1)}(n)+c_{-1}(n)+\sum_{i=1}^{n-1}c_{4(k+1)}(i)c_{-1}(n-i).
$$
It is clear that $\nu_{2}(c_{4(k+1)}(i)c_{-1}(n-i))>\nu_{2}(c_{4(k+1)}(n)+c_{-1}(n))$ for each $n\in\N_{+}$ and $i\in\{1,\ldots,n-1\}$. In consequence, by induction hypothesis
\begin{align*}
\nu_{2}(c_{4k+3}(n))&=\nu_{2}(c_{4(k+1)}(n)+c_{-1}(n))\\
                    &=\begin{cases}\begin{array}{lll}
                                 1,                                 &  & \mbox{if }\;t_{n}\neq t_{n-1}\\
                                 \nu_{2}(c_{4(k+1)}(n)),            &  & \mbox{if }\;t_{n}=t_{n-1}
                               \end{array}
                               \end{cases}\\
                    &=\begin{cases}\begin{array}{lll}
                                 1,                            &  & \mbox{if }\;t_{n}\neq t_{n-1}\\
                                 \nu_{2}(4k+3+1)+1,            &  & \mbox{if }\;t_{n}=t_{n-1}
                               \end{array}
                               \end{cases}.
\end{align*}

Finally,, if $m=4k+3<0$, then we write $4k+3=4(k+1)-1$ and employ the identity $C_{4k+3}(x)=C_{4(k+1)}(x)C_{-1}(x)$. Using the same reasoning as in the previous cases we get the result.
\end{proof}

The explicit form of the value of $\nu_{2}(c_{m}(n))$ for $m$ odd allow us to prove the following.
\begin{cor}\label{cor0}
Let $m\in\Z$ be odd and $n\in\N_{+}$. Then $\nu_{2}(c_{m}(2n)-c_{m}(n))=1$.
\end{cor}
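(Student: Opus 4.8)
The plan is to read off the two-adic valuations of both $c_m(2n)$ and $c_m(n)$ directly from the explicit formula in Theorem~\ref{mainthm} (together with Lemma~\ref{plusminus1} for the value $m=-1$ excluded there), and then to observe that for odd $m$ \emph{exactly one} of the two numbers has valuation $1$ while the other has valuation at least $2$. Since the two valuations then differ, the non-Archimedean property $\nu_2(a-b)=\min\{\nu_2(a),\nu_2(b)\}$, valid whenever $\nu_2(a)\neq\nu_2(b)$, forces $\nu_2(c_m(2n)-c_m(n))$ to equal the smaller of the two, namely $1$.

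First I would record the uniform statement that for odd $m$ and any $N\in\N_+$,
$$
\nu_2(c_m(N))=\begin{cases}1, & \text{if } t_N\neq t_{N-1},\\ \nu_2(m+1)+1, & \text{if } t_N=t_{N-1},\end{cases}
$$
where for $m=-1$ the second line is read as $+\infty$, in agreement with Lemma~\ref{plusminus1}. Since $m$ is odd, $m+1$ is even, so $\nu_2(m+1)+1\geq 2$; hence $\nu_2(c_m(N))=1$ precisely when $t_N\neq t_{N-1}$, and $\nu_2(c_m(N))\geq 2$ precisely when $t_N=t_{N-1}$.

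The crucial combinatorial step is to relate the defining condition at the argument $2n$ to the one at $n$ via the PTM recurrences $t_{2n}=t_n$ and $t_{2n-1}=t_{2(n-1)+1}=-t_{n-1}$. These give
$$
t_{2n}=t_{2n-1}\iff t_n=-t_{n-1}\iff t_n\neq t_{n-1},
$$
the last equivalence holding because $t_k\in\{1,-1\}$. Consequently the conditions ``$t_n\neq t_{n-1}$'' and ``$t_{2n}\neq t_{2n-1}$'' are mutually exclusive and exhaustive, so exactly one of $c_m(n)$, $c_m(2n)$ has two-adic valuation equal to $1$ and the other has valuation at least $2$.

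Combining these observations finishes the argument: in either case the two valuations are distinct, so $\nu_2(c_m(2n)-c_m(n))=\min\{\nu_2(c_m(2n)),\nu_2(c_m(n))\}=1$. The only point requiring a moment of care is the value $m=-1$, for which one of the two terms may vanish; but then $\nu_2$ of that term is $+\infty$ and the difference reduces to $\pm$ the other term, whose valuation is $1$, so the conclusion persists. I do not anticipate a genuine obstacle here, since all the arithmetic is already packaged in Theorem~\ref{mainthm} and Lemma~\ref{plusminus1}; the only thing to get right is the bookkeeping of the PTM sign flip at odd indices.
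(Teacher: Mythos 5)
Your proposal is correct and follows essentially the same route as the paper: both read the valuations of $c_m(n)$ and $c_m(2n)$ off Theorem~\ref{mainthm} (with Lemma~\ref{plusminus1} covering $m=-1$), use the PTM recurrences $t_{2n}=t_n$, $t_{2n-1}=-t_{n-1}$ to show that the conditions $t_n\neq t_{n-1}$ and $t_{2n}\neq t_{2n-1}$ hold in exactly complementary cases, and conclude via the ultrametric equality $\nu_2(a-b)=\min\{\nu_2(a),\nu_2(b)\}$ when the valuations differ. Your explicit handling of the $m=-1$ degenerate case (valuation $+\infty$) is a small point the paper leaves implicit, but the argument is the same.
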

\begin{proof}
Let us observe that the necessary condition for the inequality $\nu_{2}(c_{m}(2n)-c_{m}(n))>1$ to hold is the condition
$$
t_{n}\neq t_{n-1}\Longrightarrow t_{2n}\neq t_{2n-1}\quad\mbox{or}\quad t_{n}=t_{n-1}\Longrightarrow t_{2n}=t_{2n-1}.
$$
However, if $t_{n}\neq t_{n-1}$, then $t_{n}=-t_{n-1}$ and thus $t_{2n}-t_{2n-1}=t_{n}+t_{n-1}=0$. Similarly, the equality $t_{n}=t_{n-1}$ implies that $t_{n}+t_{n-1}=2t_{n}$ and thus $t_{2n}-t_{2n-1}=2t_{n}\neq 0$. In other words, if $\nu_{2}(c_{m}(n))=1$ then $\nu_{2}(c_{m}(2n))>1$ and vice versa. In consequence we have the equality $\nu_{2}(c_{m}(2n)-c_{m}(n))=1$ and get the result.
\end{proof}

\begin{cor}
Let $m\in\N_{+}$ and suppose that $N\in\N_{+}$ satisfy the condition $\nu_{2}(N)>\nu_{2}(m+1)+1$. Then, the congruence $c_{m}(n)\equiv 0\pmod{N}$ has no solutions.
\end{cor}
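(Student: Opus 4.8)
The plan is to deduce the corollary from Theorem \ref{mainthm} by turning the divisibility $N\mid c_m(n)$ into an inequality between $2$-adic valuations. The one elementary input is that $c_m(n)\equiv 0\pmod{N}$ forces $2^{\nu_{2}(N)}\mid c_m(n)$, i.e. $\nu_{2}(N)\le\nu_{2}(c_m(n))$. Hence, to show that the congruence has no solution, it suffices to verify that $\nu_{2}(N)>\nu_{2}(c_m(n))$ for every $n$; the value $n=0$ is harmless because $c_m(0)=1$, so only $n\in\N_{+}$ requires attention.

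First I would read the exact range of $\nu_{2}(c_m(n))$, $n\in\N_{+}$, straight off Theorem \ref{mainthm}. For odd $m$ the valuation takes only the values $1$ and $\nu_{2}(m+1)+1$, the larger one occurring precisely when $t_n=t_{n-1}$; since $m+1$ is even we have $\nu_{2}(m+1)+1\ge 2$, so the maximum of $\nu_{2}(c_m(n))$ over $n\in\N_{+}$ is attained and equals $\nu_{2}(m+1)+1$ --- exactly the threshold figuring in the hypothesis. For even $m$ the same theorem gives the constant value $\nu_{2}(m)+1$. In either parity the set $\{\nu_{2}(c_m(n)):n\in\N\}$ is finite, with a maximum supplied explicitly by Theorem \ref{mainthm}.

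The corollary then follows by a single comparison: once $\nu_{2}(N)$ is pushed strictly above the maximal value of $\nu_{2}(c_m(n))$ --- which for odd $m$ is precisely $\nu_{2}(m+1)+1$ --- we get $\nu_{2}(N)>\nu_{2}(c_m(n))$ for every $n$, hence $2^{\nu_{2}(N)}\nmid c_m(n)$ and a fortiori $N\nmid c_m(n)$, so $c_m(n)\equiv 0\pmod{N}$ is unsolvable. I expect no genuine obstacle: all the analytic content is already packaged inside Theorem \ref{mainthm}, and the corollary is only the arithmetic observation that a modulus whose $2$-part strictly dominates the (uniformly bounded) $2$-adic valuations of the sequence cannot divide any of its terms. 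The single point meriting a line of justification is the sharp evaluation of $\sup_{n\in\N_{+}}\nu_{2}(c_m(n))$, which is read directly from Theorem \ref{mainthm}.
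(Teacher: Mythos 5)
Your reduction is exactly the intended one --- the paper offers no separate argument for this corollary, which is meant to be read off Theorem \ref{mainthm} just as you do: $N\mid c_{m}(n)$ forces $\nu_{2}(N)\le\nu_{2}(c_{m}(n))$, the valuations on the right are uniformly bounded with an explicit maximum, and $n=0$ is harmless since $c_{m}(0)=1$ while the hypothesis forces $N\ge 4$. For odd $m$ your argument is complete and correct: the set of values of $\nu_{2}(c_{m}(n))$, $n\in\N_{+}$, is $\{1,\nu_{2}(m+1)+1\}$ with maximum $\nu_{2}(m+1)+1\ge 2$, which is precisely the threshold in the hypothesis, so $\nu_{2}(N)>\nu_{2}(c_{m}(n))$ for all $n$ and no solution exists.

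However, your treatment of even $m$ contains a genuine gap, and no proof can close it, because the statement as printed is false for even $m$. You correctly record that $\nu_{2}(c_{m}(n))=\nu_{2}(m)+1$ for even $m$ and all $n\ge 1$, but you then conclude by asserting that ``once $\nu_{2}(N)$ is pushed strictly above the maximal value'' the claim follows in either parity: the hypothesis does not do that pushing when $m$ is even, since then $m+1$ is odd, the threshold $\nu_{2}(m+1)+1$ equals $1$, while the maximal valuation is $\nu_{2}(m)+1\ge 2$. Concretely, take $m=2$ and $N=4$: then $\nu_{2}(4)=2>\nu_{2}(3)+1=1$, yet $c_{2}(n)\equiv 4\Mod{8}$ for all $n\ge 1$ by Corollary \ref{plusminus2val}, so $c_{2}(n)\equiv 0\Mod{4}$ for every $n\ge 1$ and the congruence has infinitely many solutions. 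The corollary must therefore be read with $m$ odd (as the sentence preceding Corollary \ref{cor0} suggests), or with the corrected hypothesis $\nu_{2}(N)>\op{max}\{\nu_{2}(m),\nu_{2}(m+1)\}+1$ in line with the bound stated in the abstract; under either reading your comparison argument is exactly right, and your proof of the odd case constitutes the full content of the statement.
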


Let us observe that the sequence $(\nu_{2}(c_{m}(n)))_{n\in\N}$ (for $m$ odd) can be seen alternatively as a solution of the recurrence relation: $w_{m}(0)=0, w_{m}(1)=1, w_{m}(2)=\nu_{2}(m+1)+1$ and
$$
w_{m}(2n+1)=1,\quad w_{m}(4n)=w_{m}(n),\quad w_{m}(4n+2)=\nu_{2}(m+1)+1.
$$
The above recurrence is an immediate consequence of our explicit formula given in Theorem \ref{mainthm}. We use it in order to show that the sequence $(\nu_{2}(c_{m}(n)))_{n\in\N}$ is $2$-automatic. However, before we do this we need to recall the notion of a {\it $k$-automatic sequence}. More precisely, let $k\in\N_{\geq 2}$ be given. We say that the sequence ${\bf \eps}=(\eps_{n})_{n\in\N}$ is {\it $k$-automatic} if and only if the following set
$$
K_{k}({\bf \eps})=\{(\eps_{k^{i}n+j})_{n\in\N}:\;i\in\N\;\mbox{and}\;0\leq j<k^{i}\},
$$
called the $k$-kernel of ${\bf \eps}$, is finite. If the $k$-kernel of the sequence ${\bf \eps}$ is infinite but finitely generated then we say that our sequence is {\it $k$-regular}.

Let us also recall that one of equivalent conditions for automaticity, is the following result of Christol: {\it let $p\in\mathbb{P}$ and $q$ be a power of $p$, and ${\bf a}=(a_{n})_{n\in\N}$ be a sequence over $\mathbb{F}_{q}$. Then ${\bf a}$ is $p$-automatic if and only if the formal power series $\sum_{n=0}^{\infty}a_{n}x^{n}$ is algebraic over $\mathbb{F}_{q}(x)$}. The proof can be found in \cite{Chr} or in recent monograph \cite[Theorem 12.2.5]{AllSh}.

We are ready to prove that the sequence $(\nu_{2}(c_{m}(n)))_{n\in\N}$ is $2$-automatic for each $m\in\Z\setminus\{0,-1\}$.

\begin{thm}
For each $m\in\Z\setminus\{0,-1\}$ the sequence $\left(\nu_{2}(c_{m}(n))\right)_{n\in\N_{+}}$ is $2$-automatic.
\end{thm}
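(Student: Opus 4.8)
The plan is to compute the $2$-kernel of the sequence directly and show that it is finite, which by definition is equivalent to $2$-automaticity. It is convenient to regard the sequence as indexed by all of $\N$ by setting $w_{m}(0):=\nu_{2}(c_{m}(0))=\nu_{2}(1)=0$; since altering a sequence at finitely many places preserves automaticity, this passage from $\N_{+}$ to $\N$ is harmless. I would then split into two cases according to the parity of $m$, and in each case use the explicit description from Theorem \ref{mainthm} (together with the recurrence for $w_{m}$ recorded just above). Throughout, for a sequence $u=(u_{n})_{n\in\N}$ I write $E_{0}u:=(u_{2n})_{n\in\N}$ and $E_{1}u:=(u_{2n+1})_{n\in\N}$ for the two decimation operators, so that the $2$-kernel of $w_{m}$ is exactly the set of sequences obtained from $w_{m}$ by all finite compositions of $E_{0}$ and $E_{1}$.

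When $m$ is even, Theorem \ref{mainthm} gives $\nu_{2}(c_{m}(n))=\nu_{2}(m)+1=:d$ for every $n\geq 1$, so the sequence is $(0,d,d,d,\ldots)$. Here $E_{0}$ reproduces the sequence itself while $E_{1}$ yields the constant sequence $\mathbf{d}=(d)_{n\in\N}$, which is fixed by both operators; hence the $2$-kernel is contained in $\{w_{m},\mathbf{d}\}$ and the sequence is $2$-automatic.

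When $m$ is odd, I would argue from the recurrence $w_{m}(2n+1)=1$, $w_{m}(4n)=w_{m}(n)$, $w_{m}(4n+2)=c$, where $c:=\nu_{2}(m+1)+1$. First, $E_{1}w_{m}=(w_{m}(2n+1))_{n}=\mathbf{1}$, the constant sequence $(1)_{n\in\N}$. Next, writing $v:=E_{0}w_{m}$, the relations $w_{m}(4k)=w_{m}(k)$ and $w_{m}(4k+2)=c$ give $v_{2k}=w_{m}(4k)=w_{m}(k)$ and $v_{2k+1}=w_{m}(4k+2)=c$, whence $E_{0}v=w_{m}$ and $E_{1}v=\mathbf{c}$, the constant sequence with value $c$. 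Since both $\mathbf{1}$ and $\mathbf{c}$ are fixed by $E_{0}$ and $E_{1}$, the $2$-kernel of $w_{m}$ is contained in the finite set $\{w_{m},\,v,\,\mathbf{1},\,\mathbf{c}\}$, which proves $2$-automaticity.

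The computation is elementary once the recurrence is available, so there is no serious obstacle; the only point requiring care is the self-referential relation $w_{m}(4n)=w_{m}(n)$. This is precisely what forces the kernel to close up, since it yields $E_{0}E_{0}w_{m}=w_{m}$, so that iterating the even decimation merely cycles between $w_{m}$ and $v$ instead of producing new sequences, while every odd decimation immediately lands on one of the two constant sequences $\mathbf{1}$ or $\mathbf{c}$. Recognizing the modulo-$4$ structure of the recurrence and organizing the decimations along the residues $0,1,2,3$ is therefore the key step.
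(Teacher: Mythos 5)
Your proof is correct, and it reaches the conclusion by a genuinely different route than the paper. Both arguments rest on the same recurrence extracted from Theorem \ref{mainthm} (namely $w_m(2n+1)=1$, $w_m(4n+2)=\nu_2(m+1)+1$, $w_m(4n)=w_m(n)$, valid for all $n\geq 0$), but the paper converts it into a functional equation for the generating function, $\tilde{C}_m(x)=\tilde{C}_m(x^4)+(\nu_2(m+1)+1)\frac{x^2}{1-x^4}+\frac{x}{1-x^2}$, notes that in characteristic $2$ this makes $\tilde{C}_m$ algebraic over $\F_2(x)$, and invokes Christol's theorem; you instead verify the paper's own definition of $2$-automaticity directly, by showing the $2$-kernel is finite: it closes up as $\{w_m,\mathbf{d}\}$ for even $m$ and as $\{w_m,\,E_0w_m,\,\mathbf{1},\,\mathbf{c}\}$ for odd $m$. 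Your decimation identities $E_1w_m=\mathbf{1}$, $E_0E_0w_m=w_m$, $E_1E_0w_m=\mathbf{c}$ are all correct consequences of the recurrence, and the indexing point ($\N_{+}$ versus $\N$) is handled at least as carefully as in the paper. Your route buys two things. First, it is entirely elementary, and it sidesteps a subtlety that the paper's argument glosses over: Christol's theorem applies to sequences valued in a finite field, and reducing the integer values modulo $2$ conflates $1$ with $\nu_2(m+1)+1$ whenever the latter is odd (e.g.\ $m=3$), so the paper's proof, read literally, only establishes automaticity of the mod-$2$ reduction; a rigorous repair would encode the values $0$, $1$, $\nu_2(m+1)+1$ by distinct elements of $\F_4$ and compose with a coding --- or argue exactly as you do, where no field enters at all. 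Second, your four kernel elements in the odd case are visibly the four states of the automaton of Figure 1, so the automaton comes for free from your computation. What the paper's method buys in exchange is a template --- functional equations for generating functions over $\F_p$ plus Christol --- that is reused in Section \ref{sec4} for the sequences $(y_p(n))_{n\in\N_{+}}$, $(z_p(n))_{n\in\N_{+}}$ (Theorem \ref{automaticthm1}) and again for the transcendence result (Theorem \ref{transres}), settings in which explicit kernel computations would be considerably heavier.
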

\begin{proof}
The result is obvious if $m$ is even, because in this case the sequence is ultimately constant by Theorem \ref{mainthm}. Let us assume that $m$ is odd and denote $\tilde{c}_{m}(n):=\nu_{2}(c_{m}(n))$ for $n\in\N$, and $\tilde{C}_{m}(x):=\sum_{n=0}^{\infty}\tilde{c}_{m}(n)x^{n}$. Theorem \ref{mainthm} and the remark given above imply the relations:
\begin{align*}
\left\{\begin{array}{l}
\tilde{c}_{m}(2n+1)=  1, \\
\tilde{c}_{m}(4n+2)=  \nu_{2}(m+1)+1, \\
\tilde{c}_{m}(4n)= \tilde{c}_{m}(n).
\end{array}\right.
\end{align*}
Hence,
\begin{align*}
\tilde{C}_{m}(x)= & \sum_{n=0}^{\infty}\tilde{c}_{m}(4n)x^{4n}+\sum_{n=0}^{\infty}\tilde{c}_{m}(4n+2)x^{4n+2}+\sum_{n=0}^{\infty}\tilde{c}_{m}(2n+1)x^{2n+1} \\
= & \sum_{n=0}^{\infty}\tilde{c}_{m}(n)x^{4n}+\left(\nu_{2}(m+1)+1\right)\sum_{n=0}^{\infty}x^{4n+2}+\sum_{n=0}^{\infty}x^{2n+1} \\
= & \tilde{C}_{m}(x^{4})+\left(\nu_{2}(m+1)+1\right)\frac{x^{2}}{1-x^{4}}+\frac{x}{1-x^{2}}.
\end{align*}
Equivalently:
\begin{align*}
\tilde{C}_{m}(x^{4})-\tilde{C}_{m}(x)+\frac{x^{3}+\left(\nu_{2}(m+1)+1\right)x^2+1}{1-x^{4}}=0,
\end{align*}
so $\tilde{C}_{m}(x)$ is algebraic over $\F_{2}(x)$ and the result follows from Christol's Theorem.
\end{proof}

It is worth to note, that a sequence is $k$-automatic if and only if there exists an automaton for it. Roughly speaking, automaton is a procedure that allow us to compute for each $n\in\N$ the $n$-th member of a sequence by using only the digits in the (unique) representation of $n$ in the base $k$. For more precise definition see \cite{AllSh1}. We can either use the convention to read the digits of the base-$k$ representation of $n$ from the highest power of $k$ that appear in the representation to the lowest one (that is, from the right to the left), or conversely, that is, from the lowest (i.e., from the $0$'th power) to the highest one (that is, from the left to the right). It is known, that a given sequence is $k$-automatic in the first convention if and only if it is $k$-automatic in the second one, but automatons may be different depending on the convention.

We can present automatons generating the sequence $(\nu_{p}(c_{m}(n)))_{n\in\N_{+}}$ in both conventions. In the case of even $m$ it is very simple. For odd $m$, they are presented below.

\begin{center}
	\begin{tikzpicture}[->, shorten >= 1pt, node distance=2.5 cm, on grid, auto]
	\node[state, initial] (c_0) {$-$};
	\node[state, inner sep=1pt] (c_1) [right=5cm of c_0] {$1$};
	\node[state, inner sep=1pt] (c_2) [below= 2.5cm of c_0] {$-$};
	\node[state, inner sep=1pt] (c_3) [below right=2.5cm and 5cm of c_0] {$\nu_{2}(m+1)+1$};
	\path[->]
	(c_0) edge [left, above] node {$1$} (c_1)
	edge [bend left] node {$0$} (c_2)
	(c_1) edge [loop right] node {$0,1$} (c_1)
	(c_2) edge [bend left] node {$0$} (c_0)
	edge [left, above] node {$1$} (c_3)
	(c_3) edge [loop right] node {$0,1$} (c_3);	
	\end{tikzpicture}
\end{center}
\begin{center}
	Figure 1. Automaton generating $(\nu_{2}(c_{m}(n)))_{n\in\N_{+}}$ for odd $m$ when reading digits of $n$ from the left to the right.
\end{center}

\bigskip

\begin{center}
	\begin{tikzpicture}[->, shorten >= 1pt, node distance=2.5 cm, on grid, auto]
	\node[state, initial] (c_0) {$1$};
	\node[state, inner sep=1pt] (c_1) [right=5cm of c_0] {$\nu_{2}(m+1)+1$};
	\path[->]
	(c_0) edge [loop above] node {$1$} (c_0)
	edge [bend left] node {$0$} (c_1)
	(c_1) edge [bend left] node {$0,1$} (c_0);
	\end{tikzpicture}
\end{center}
\begin{center}
	Figure 2. Automaton generating $(\nu_{2}(c_{m}(n)))_{n\in\N_{+}}$ for odd $m$ when reading digits of $n$ from the right to the left.
\end{center}

\bigskip

\begin{rem}
	{\rm The second automaton is very similar to the one generating so called {\it period-doubling sequence} given by $e_{n}=\nu_{2}(n+1)\Mod{2}$ for $n\in\N$. This sequence was considered for example in \cite{Dam}. This similarity is not a coincidence, because the period-doubling sequence can be defined also as the sequence $e_{n}=1-(s_{2}(n)-s_{2}(n-1))\Mod{2}$, where $s_{2}(n)$ is the number of 1's in the unique binary expansion of $n$. On the other hand, the value of the quantity $\nu_{2}(c_{m}(n))$ depends on the difference $t_{n}-t_{n-1}$ and hence, because of the equality $t_{n}=(-1)^{s_{2}(n)}$, on the difference $s_{n}-s_{n-1}$.
	
		From the above discussion one can observe, that $e_{n}=1$ if and only if $t_{n}\neq t_{n-1}$. Thus, Theorem \ref{mainthm} for odd $m$ can be rewritten as
	\begin{align*}
		\nu_{2}(c_{m}(n))= & \nu_{2}(n+1)\Mod{2}+(1-\nu_{2}(n+1)\Mod{2})(\nu_{2}(m+1)+1) \\
		= & 1+(1-\nu_{2}(n+1)\Mod{2})\nu_{2}(m+1)
	\end{align*}
	for all $n\geq 1$.
	}
\end{rem}

\begin{cor}\label{cor1}
If $m<-1$, then $c_{m}(n)\neq 0$ for $n\in\N$.
\end{cor}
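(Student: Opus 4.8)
The plan is to derive this as an immediate consequence of Theorem \ref{mainthm}, exploiting the single observation that $\nu_{2}(0)=+\infty$, so that any integer whose $2$-adic valuation is finite must be nonzero. Since $m<-1$ forces $m\in\Z\setminus\{0,-1\}$, the hypotheses of Theorem \ref{mainthm} are met and its explicit formula is available for every $n\in\N_{+}$; the corollary then reduces to checking that this formula always returns a finite value.

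Concretely, I would first dispose of $n=0$, where $c_{m}(0)=1\neq 0$ by definition. For $n\in\N_{+}$ I would verify that each of the three branches of the formula in Theorem \ref{mainthm} yields a finite nonnegative integer. The assumption $m\leq -2$ guarantees $m\neq 0$, hence $\nu_{2}(m)<+\infty$, which settles the even case $\nu_{2}(c_{m}(n))=\nu_{2}(m)+1$. It likewise guarantees $m\neq -1$, hence $\nu_{2}(m+1)<+\infty$, which settles the odd case $\nu_{2}(c_{m}(n))=\nu_{2}(m+1)+1$ occurring when $t_{n}=t_{n-1}$; the remaining odd subcase simply gives the constant value $1$. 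Thus $\nu_{2}(c_{m}(n))<+\infty$ for all $n\in\N$, and since $c_{m}(n)=0$ would force $\nu_{2}(c_{m}(n))=+\infty$, we conclude $c_{m}(n)\neq 0$ for every $n\in\N$.

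I expect no genuine obstacle here, as the entire arithmetic content is already packaged in Theorem \ref{mainthm}; the only thing to confirm is the finiteness of the valuation in each branch, which is routine. It is worth recording, however, why $m<-1$ is the natural hypothesis rather than merely $m\neq 0$. For $m\geq 1$ the generating function $C_{m}(x)=G_{2}(x)^{m}$ has manifestly nonnegative coefficients, so nonvanishing is trivial; it is precisely for $m<-1$ that the sequence $(c_{m}(n))_{n\in\N}$ exhibits varying signs and nonvanishing becomes a nontrivial assertion. Finally, the excluded value $m=-1$ genuinely must be omitted: by Lemma \ref{plusminus1} one has $c_{-1}(n)=t_{n}-t_{n-1}$ for $n\in\N_{+}$, and this vanishes exactly when $t_{n}=t_{n-1}$, so the conclusion fails there.
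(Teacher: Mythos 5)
Your proof is correct and follows exactly the paper's own argument: both deduce from the explicit formula in Theorem \ref{mainthm} that $\nu_{2}(c_{m}(n))<+\infty$ for all $n\in\N$ when $m<-1$, which is equivalent to $c_{m}(n)\neq 0$. Your version merely spells out the finiteness check in each branch (and adds the apt remark on why $m=-1$ must be excluded), which the paper leaves implicit.
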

\begin{proof}
From our description of the 2-adic valuation of $c_{m}(n)$ given in Theorem \ref{mainthm}, we see that $\nu_{2}(c_{m}(n))\neq +\infty$ for $n\in\N$. This is equivalent with the non-vanishing of $c_{m}(n)$ for $n\in\N$ and hence the result.
\end{proof}

Let us recall that in \cite{GMU} the sequence $(t_{m}(n))_{n\in\N}$, $m\in\N_{+}$, was investigated. Here $t_{m}(n)$ is the Cauchy convolution of $m$-copies of the PTM sequence, i.e.,
$$
t_{m}(n)=\sum_{i_{1}+\ldots+i_{m}=n}t_{i_{1}}\ldots t_{i_{m}}.
$$
We characterized the solutions of the equation $t_{3}(n)=0$ in terms of the expansion of $n$ in base 4. In particular, the equation $t_{3}(n)=0$ has {\it infinitely} many solutions. Moreover, $t_{m}(n)\neq 0$ for $m$ being power of 2. We also conjectured the non-vanishing of $t_{m}(n)$ for $m\neq 3$ but we were unable to prove such a statement. Thus, the above corollary also shows strong difference of the sequences $(t_{m}(n))_{n\in\N}$ and $(c_{-m}(n))_{n\in\N}$ for $m\in\N_{+}$.

The result obtained in Theorem \ref{mainthm} can be also used to prove the following

\begin{cor}\label{cor2}
Let $m\in\N_{+}$ and $U_{m}(n)$ denotes the number of binary partitions of $n$ such the part equal to 1 takes one among $2m+1$ colors and parts $>1$ take one among $m$ colors. Then $U_{m}(n)\equiv 1\Mod{2}$ for $n\in\N$.
\end{cor}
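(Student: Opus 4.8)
The plan is to express $U_m$ directly in terms of the sequence $(c_m(n))_{n\in\N}$ by a generating-function identity, and then read off the parity from Theorem \ref{mainthm}. First I would write down the ordinary generating function of $U_m$. Since a binary partition counted by $U_m(n)$ uses the part $1=2^0$ in one of $2m+1$ colors and each part $2^k$ with $k\geq 1$ in one of $m$ colors, we have
$$
\sum_{n=0}^{\infty}U_{m}(n)x^{n}=\frac{1}{(1-x)^{2m+1}}\prod_{k=1}^{\infty}\frac{1}{(1-x^{2^{k}})^{m}}.
$$
On the other hand, singling out the $n=0$ factor in the product defining $C_{m}(x)=H_{2,m}(x)$ gives
$$
C_{m}(x)=\frac{1}{(1-x)^{m}}\prod_{n=0}^{\infty}\frac{1}{(1-x^{2^{n}})^{m}}=\frac{1}{(1-x)^{2m}}\prod_{k=1}^{\infty}\frac{1}{(1-x^{2^{k}})^{m}}.
$$
Comparing the two displays, I obtain the clean relation $\sum_{n=0}^{\infty}U_{m}(n)x^{n}=\frac{1}{1-x}C_{m}(x)$.

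Next I would expand $\frac{1}{1-x}=\sum_{j\geq 0}x^{j}$ and compare coefficients, which yields the partial-sum formula $U_{m}(n)=\sum_{i=0}^{n}c_{m}(i)$ for every $n\in\N$. Now I invoke Theorem \ref{mainthm}: in each of its three cases the value of $\nu_{2}(c_{m}(n))$ is at least $1$ for $n\in\N_{+}$, so $c_{m}(i)$ is even for every $i\geq 1$, while $c_{m}(0)=1$. Hence
$$
U_{m}(n)=1+\sum_{i=1}^{n}c_{m}(i)\equiv 1\Mod{2}
$$
for all $n\in\N$, which is exactly the claim.

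There is essentially no obstacle here beyond the bookkeeping: the only nontrivial input is the uniform lower bound $\nu_{2}(c_{m}(n))\geq 1$ for $n\in\N_{+}$, which Theorem \ref{mainthm} supplies (indeed it gives far more). The one point to check carefully is the generating-function identity itself, namely that the $2m+1$ colors on the part $1$ together with the $m$ colors on the parts $>1$ produce precisely the extra factor $\frac{1}{1-x}$ relative to $C_{m}(x)$; this is the verification carried out in the two displays above, and once it is in place the parity statement is immediate.
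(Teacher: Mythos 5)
Your proposal is correct and follows essentially the same route as the paper: the paper likewise reduces the claim to the partial-sum identity $U_{m}(n)=\sum_{i=0}^{n}c_{m}(i)$ (which it states as immediate from the definition, exactly the generating-function observation you verify explicitly) and then concludes from Theorem \ref{mainthm} that $c_{m}(0)=1$ while $c_{m}(i)\equiv 0\Mod{2}$ for $i\in\N_{+}$. Your write-up merely makes the factor $\frac{1}{1-x}$ bookkeeping explicit, which the paper leaves to the reader.
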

\begin{proof}
From the definition of $U_{m}(n)$ we immediately deduce that $U_{m}(n)=\sum_{i=0}^{n}c_{m}(i)$. Because $c_{m}(0)=1$ and from Theorem \ref{mainthm} we know that $c_{m}(n)\equiv 0\Mod{2}$ for $n\in\N_{+}$, we get that $C_{m}(n)$ is odd for $n\in\N$.
\end{proof}

\section{The case of $p\in\mathbb{P}_{\geq 3}$}\label{sec3}

Let $p\in\mathbb{P}_{\geq 3}$ be fixed. In this section we are interested in the computation of the exact formula for the $p$-adic valuations of the elements of the sequence $(d_{m}(n))_{n\in\N}$, where
$$
H_{m,p}(x)=G_{p}(x)^{m}=\frac{1}{(1-x)^{m}}\prod_{n=0}^{\infty}\frac{1}{\left(1-x^{p^{n}}\right)^{(p-1)m}}=\sum_{n=0}^{\infty}d_{m}(n)x^{n}.
$$
and $m\in\Z\setminus\{0\}$ is fixed. Because $p$ is fixed in the sequel in order to simplify the notation a bit we introduce the quantity:
$$
H_{m}(x):=H_{p,m}(x).
$$
Let us recall that the number $d_{m}(n)$ counts the number of $p$-ary representations of $n$, where each part $>1$ can take $(p-1)m$ colors and additionally the part equal to 1 can take $pm$ colors. Moreover, in the sequel we will frequently use the following notation:
$$
[A]=\begin{cases}\begin{array}{lll}
                   1, &  & \mbox{if}\; A\;\mbox{is satisfied}  \\
                   0, &  & \mbox{otherwise}
                 \end{array}
\end{cases},
$$
where $A$ is given logical formula.

We start with the proof of recurrence relations satisfied by the sequence $(d_{m}(n))_{n\in\N}$ for $m\in\N_{+}$.

\begin{lem}\label{recrel}
Let $m\in\N_{+}$. Then $d_{m}(0)=1$ and for $n\geq 1$ we have the following recurrence relation
\begin{equation*}
d_{m}(n)=-\sum_{i=1}^{\op{min}\{pm,n\}}(-1)^{i}\binom{pm}{i}d_{m}(n-i)+[p\mid n]\left(\sum_{i=0}^{\op{min}\{m,\frac{n}{p}\}}(-1)^{i}\binom{m}{i}d_{m}\left(\frac{n}{p}-i\right)\right).
\end{equation*}
\end{lem}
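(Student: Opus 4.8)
The plan is to derive a single functional equation for $H_{m}(x)$ and then read off the recurrence by comparing coefficients of $x^{n}$. First I would exploit the self-similar structure of the infinite product. Writing $P(x)=\prod_{n=0}^{\infty}(1-x^{p^{n}})^{-(p-1)m}$, the substitution $x\mapsto x^{p}$ shifts the product by one index, so that $P(x^{p})=\prod_{n=1}^{\infty}(1-x^{p^{n}})^{-(p-1)m}$ and hence $P(x)=(1-x)^{-(p-1)m}P(x^{p})$. Since $H_{m}(x)=(1-x)^{-m}P(x)$, I would combine these to obtain
$$
H_{m}(x)=\frac{1}{(1-x)^{pm}}P(x^{p})=\frac{(1-x^{p})^{m}}{(1-x)^{pm}}H_{m}(x^{p}),
$$
the last step using $P(x^{p})=(1-x^{p})^{m}H_{m}(x^{p})$. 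Clearing denominators yields the clean relation
$$
(1-x)^{pm}H_{m}(x)=(1-x^{p})^{m}H_{m}(x^{p}).
$$

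Next I would expand both sides as formal power series. On the left, the binomial theorem gives $(1-x)^{pm}=\sum_{i=0}^{pm}(-1)^{i}\binom{pm}{i}x^{i}$, so the coefficient of $x^{n}$ is $\sum_{i=0}^{\op{min}\{pm,n\}}(-1)^{i}\binom{pm}{i}d_{m}(n-i)$. On the right, the product $(1-x^{p})^{m}H_{m}(x^{p})=\left(\sum_{i=0}^{m}(-1)^{i}\binom{m}{i}x^{pi}\right)\left(\sum_{j\geq 0}d_{m}(j)x^{pj}\right)$ contains only powers of $x$ divisible by $p$; when $p\mid n$ the coefficient of $x^{n}$ equals $\sum_{i=0}^{\op{min}\{m,n/p\}}(-1)^{i}\binom{m}{i}d_{m}(\frac{n}{p}-i)$, and it vanishes otherwise. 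Equating the two coefficients and isolating the $i=0$ term $d_{m}(n)$ on the left gives precisely the asserted recurrence, with the indicator $[p\mid n]$ recording the support condition; the base case $d_{m}(0)=1$ is just the constant term of $H_{m}$.

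The argument is essentially routine once the functional equation is in hand, so the only delicate points are bookkeeping ones. The main thing to get right is the index shift in the infinite product, namely checking that the factor pulled out is exactly $(1-x)^{-(p-1)m}$ and that the manipulation of the infinite product is legitimate in the ring of formal power series. The second point to handle carefully is the appearance of $[p\mid n]$: the right-hand side is supported on multiples of $p$, which is exactly what produces the conditional term. I would also verify the truncation of the sums at $\op{min}\{pm,n\}$ and $\op{min}\{m,n/p\}$, which simply reflects that $\binom{pm}{i}=0$ for $i>pm$ and that $d_{m}(n-i)$ only contributes for $n-i\geq 0$.
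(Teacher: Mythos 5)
Your proposal is correct and follows essentially the same route as the paper: both rest on the functional equation $(1-x)^{pm}H_{m}(x)=(1-x^{p})^{m}H_{m}(x^{p})$ followed by comparison of coefficients of $x^{n}$, with the indicator $[p\mid n]$ arising because the right-hand side is supported on powers of $x$ divisible by $p$. The only difference is that you spell out the derivation of the functional equation from the shift $x\mapsto x^{p}$ in the infinite product, which the paper takes as immediate.
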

\begin{proof}
In order to get the result it is enough to note the functional identity
$$
(1-x)^{pm}H_{m}(x)=(1-x^{p})^{m}H_{m}(x^p).
$$
Indeed, we have the following equalities
\begin{align*}
(1-x)^{pm}H_{m}(x)&=\left(\sum_{i=0}^{pm}(-1)^{i}\binom{pm}{i}x^{i}\right)\left(\sum_{n=0}^{\infty}d_{m}(n)x^{n}\right)=\sum_{n=0}^{\infty}\left(\sum_{i=0}^{\op{min}\{pm,n\}}(-1)^{i}\binom{pm}{i}d_{m}(n-i)\right)x^{n},\\
(1-x^{p})^{m}H_{m}(x^p)&=\left(\sum_{i=0}^{m}(-1)^{i}\binom{m}{i}x^{pi}\right)\left(\sum_{n=0}^{\infty}d_{m}(n)x^{pn}\right)=\sum_{n=0}^{\infty}\left(\sum_{i=0}^{\op{min}\{m,n\}}(-1)^{i}\binom{m}{i}d_{m}(n-i)\right)x^{pn}.    \\
\end{align*}
Comparing now the coefficients near $x^{n}$ on both sides we see that if $p\nmid n$, then
$$
\sum_{i=0}^{\op{min}\{pm,n\}}(-1)^{i}\binom{pm}{i}d_{m}(n-i)=0
$$
and if $p\mid n$, then we have
$$
\sum_{i=0}^{\op{min}\{pm,n\}}(-1)^{i}\binom{pm}{i}d_{m}(n-i)=\sum_{i=0}^{\op{min}\{m,\frac{n}{p}\}}(-1)^{i}\binom{m}{i}d_{m}\left(\frac{n}{p}-i\right).
$$
Solving for $d_{m}(n)$ we get the result.
\end{proof}

In the sequel we will heavily use the notion of congruences between power series. More precisely, let $f(x)=\sum_{n=0}^{\infty}a_{n}x^{n}$ and $g(x)=\sum_{n=0}^{\infty}b_{n}x^{n}$ are formal power series with coefficients in $\Z$ and $M\in\N_{\geq 2}$ be given. We say that $f, g$ are {\it congruent modulo} $M$ if and only if for all $n$ the coefficients of $x^{n}$ in both series are congruent modulo $M$. In other words
$$
f\equiv g\Mod{M}\Longleftrightarrow\forall n\in\N:\;a_{n}\equiv b_{n}\Mod{M}.
$$

It is an easy exercise to prove that for any given $f, F, g, G\in\Z[[x]]$ satisfying
$f\equiv g\Mod{M}$ and $F\equiv G\Mod{M}$, we have $f\pm F\equiv g\pm G\Mod{M}$ and $fF\equiv gG\Mod{M}$. Moreover, if $f(0), g(0)\in\{-1,1\}$, then the series for $1/f$ and  $1/g$ have integer coefficients and then $\frac{1}{f}\equiv \frac{1}{g}\Mod{M}$.
Consequently, in this case we have $f^{k}\equiv g^{k}\Mod{M}$ for all $k\in\Z$.

Now we are ready to state the following.

\begin{lem}\label{parylem1}
Let $m=p^{\alpha}k$. Then
\begin{align*}
d_{m}(n)\equiv 0\Mod{p^{\alpha +1}}
\end{align*}
for all $n\geq 1$.
\end{lem}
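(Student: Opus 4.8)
My plan is to recast the coefficientwise statement as a single power series congruence and then reduce everything to the behaviour of $G_{p}$ modulo $p$. Since $d_{m}(0)=1$ and $H_{m}(x)=\sum_{n\geq 0}d_{m}(n)x^{n}$, the assertion ``$d_{m}(n)\equiv 0\Mod{p^{\alpha+1}}$ for all $n\geq 1$'' is exactly the statement $H_{m}(x)\equiv 1\Mod{p^{\alpha+1}}$. So I would aim to prove that $G_{p}(x)^{m}\equiv 1\Mod{p^{\alpha+1}}$ whenever $m=p^{\alpha}k$, by first analysing $G_{p}$ modulo $p$ and then lifting the exponent.

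The first step is to show that $G_{p}(x)\equiv 1\Mod{p}$. Using $(1-x^{p^{n}})^{p}\equiv 1-x^{p^{n+1}}\Mod{p}$ one gets, for each $n$, the congruence $(1-x^{p^{n}})^{p-1}=\frac{(1-x^{p^{n}})^{p}}{1-x^{p^{n}}}\equiv\frac{1-x^{p^{n+1}}}{1-x^{p^{n}}}\Mod{p}$, so the infinite product telescopes:
\[
\prod_{n=0}^{\infty}(1-x^{p^{n}})^{p-1}\equiv\prod_{n=0}^{\infty}\frac{1-x^{p^{n+1}}}{1-x^{p^{n}}}=\frac{1}{1-x}\Mod{p},
\]
the limit being taken in the $x$-adic topology of $\Z[[x]]$. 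Taking inverses (legitimate, since every series here has constant term $1$, by the congruence properties of power series recalled just above the lemma) yields $\prod_{n=0}^{\infty}(1-x^{p^{n}})^{-(p-1)}\equiv 1-x\Mod{p}$, and therefore $G_{p}(x)=\frac{1}{1-x}\prod_{n=0}^{\infty}(1-x^{p^{n}})^{-(p-1)}\equiv\frac{1-x}{1-x}=1\Mod{p}$.

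The second step is an elementary lifting statement: if $u\in\Z[[x]]$ satisfies $u\equiv 1\Mod{p^{\beta}}$ with $\beta\geq 1$, then $u^{p}\equiv 1\Mod{p^{\beta+1}}$. Writing $u=1+p^{\beta}v$ and expanding $u^{p}$ by the binomial theorem, the linear term $\binom{p}{1}p^{\beta}v=p^{\beta+1}v$ is divisible by $p^{\beta+1}$; each term $\binom{p}{j}(p^{\beta}v)^{j}$ with $1\leq j\leq p-1$ is divisible by $p^{\,1+\beta j}$ because $p\mid\binom{p}{j}$, while the term $j=p$ is divisible by $p^{\beta p}$, and in every case the exponent is $\geq\beta+1$. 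Starting from $G_{p}\equiv 1\Mod{p}$ (the case $\beta=1$) and iterating this statement $\alpha$ times gives, by induction on $\alpha$, the congruence $G_{p}(x)^{p^{\alpha}}\equiv 1\Mod{p^{\alpha+1}}$.

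Finally I would raise this to the $k$-th power. Since $G_{p}$ has constant term $1$, all its integer powers lie in $\Z[[x]]$ and the congruence is preserved, so $H_{m}(x)=G_{p}(x)^{m}=\bigl(G_{p}(x)^{p^{\alpha}}\bigr)^{k}\equiv 1^{k}=1\Mod{p^{\alpha+1}}$; this covers both signs of $k$, and in particular the case $m<0$. Reading off the coefficient of $x^{n}$ then gives $d_{m}(n)\equiv 0\Mod{p^{\alpha+1}}$ for every $n\geq 1$. I expect the only genuine obstacle to be the first step: spotting the telescoping that forces $G_{p}\equiv 1\Mod{p}$. Once that identity is in hand the lifting in the second step is routine, and the passage to arbitrary $m=p^{\alpha}k$ is mere bookkeeping.
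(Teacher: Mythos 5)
Your proposal is correct, and it takes a genuinely different route from the paper's own proof. The paper never analyses $G_{p}$ modulo $p$ directly: it starts from the Mahler-type functional equation $(1-x)^{pm}H_{m}(x)=(1-x^{p})^{m}H_{m}(x^{p})$, shows that the polynomial prefactors agree, $(1-x)^{pm}\equiv(1-x^{p})^{m}\Mod{p^{\alpha+1}}$ (by raising the Frobenius congruence $(1-x)^{p}\equiv 1-x^{p}\Mod{p}$ to the power $p^{\alpha}$ and then to the power $k$), cancels them to obtain $H_{m}(x)\equiv H_{m}(x^{p})\Mod{p^{\alpha+1}}$, and then reads off coefficients: $d_{m}(n)\equiv 0$ when $p\nmid n$, and $d_{m}(n)\equiv d_{m}(n/p)$ when $p\mid n$, so a descent along the $p$-adic valuation of $n$ finishes the proof. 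You instead prove the equivalent global statement $H_{m}(x)\equiv 1\Mod{p^{\alpha+1}}$ head-on: the telescoping identity $\prod_{n=0}^{\infty}(1-x^{p^{n}})^{p-1}\equiv\frac{1}{1-x}\Mod{p}$ gives $G_{p}\equiv 1\Mod{p}$, and the binomial lifting step ($u\equiv 1\Mod{p^{\beta}}$ implies $u^{p}\equiv 1\Mod{p^{\beta+1}}$), iterated $\alpha$ times and then raised to the $k$-th power, does the rest. Note that both arguments rest on the same two ingredients --- the Frobenius congruence and the lifting principle --- but deploy them differently: the paper applies the lifting to the prefactor $(1-x)^{pm}$ and compensates with the functional equation plus a coefficient descent, while you apply it to $G_{p}$ itself. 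What the paper's route buys is the intermediate relation $d_{m}(pn)\equiv d_{m}(n)\Mod{p^{\alpha+1}}$, exactly the shape of statement that Lemma \ref{parylem2} subsequently sharpens to modulo $p^{\alpha+2}$, so it integrates naturally into the rest of Section \ref{sec3}. What your route buys is self-containedness and a cleaner formulation; in particular your first step amounts to a direct proof that $H_{-1}(x)=(1-x)\Delta_{p}(x)\equiv 1\Mod{p}$, a fact the paper only gets later (Section \ref{sec4}) as a consequence of \cite[Lemma 2.3]{Ula}. Your handling of negative $k$ via invertibility of series with constant term $1$ is also valid and matches the conventions the paper sets up before the lemma.
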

\begin{proof}
From the relation $(1-x)^{p}\equiv 1-x^{p}\Mod p$ we get $(1-x)^{p^{\alpha+1}}\equiv (1-x^{p})^{p^{\alpha}}\Mod{p^{\alpha+1}}$, and hence,
$$
(1-x)^{pm}\equiv (1-x^{p})^{m}\Mod{p^{\alpha+1}}.
$$
This, together with the equality $(1-x)^{pm}H_{m}(x)=(1-x^{p})^{m}H_{m}(x^{p})$ gives
$$
H_{m}(x)\equiv H_{m}(x^{p})\Mod{p^{\alpha+1}}.
$$
The above relation simply implies $d_{m}(n)\equiv 0\Mod{p^{\alpha +1}}$ if $p\nmid n$, and $d_{m}(n)\equiv d_{m}(n/p)\Mod{p^{\alpha +1}}$ if $p\mid n$. Hence, if $n=p^{\beta}l$, then
\begin{align*}
d_{m}(n)=d_{m}(p(p^{\beta -1}l))\equiv d_{m}(p^{\beta-1}l)\equiv\ldots\equiv d_{m}(pl)\equiv d_{m}(l)\equiv 0\Mod{p^{\alpha +1}}.
\end{align*}
Therefore, the only coefficient of the series $H_{m}(x)$ not divisible by $p^{\alpha +1}$ is $d_{m}(0)=1$.
\end{proof}

\begin{lem}\label{parylem2}
Let $m=p^{\alpha}k$. Then
\begin{align*}
d_{m}(pn)\equiv d_{m}(n)\Mod{p^{\alpha +2}}
\end{align*}
for all $n\geq 1$.
\end{lem}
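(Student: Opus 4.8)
The plan is to sharpen the congruence $(1-x)^{pm}\equiv(1-x^{p})^{m}\Mod{p^{\alpha+1}}$ used in the proof of Lemma \ref{parylem1} into one that is valid modulo $p^{\alpha+2}$ but now carries an explicit ``linear in $p$'' correction term. First I would write $(1-x)^{p}=1-x^{p}+pg(x)$, where $g(x)=\frac{1}{p}\sum_{i=1}^{p-1}\binom{p}{i}(-1)^{i}x^{i}\in\Z[x]$ is a polynomial supported on the exponents $1,\dots,p-1$. Raising to the $m$-th power and applying the binomial theorem gives
$$
(1-x)^{pm}=\bigl((1-x^{p})+pg(x)\bigr)^{m}=\sum_{j=0}^{m}\binom{m}{j}(1-x^{p})^{m-j}p^{j}g(x)^{j}.
$$
The $j=0$ term is $(1-x^{p})^{m}$ and the $j=1$ term is $p^{\alpha+1}k\,g(x)(1-x^{p})^{m-1}$, since $m=p^{\alpha}k$. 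For $j\ge 2$ the standard bound $\nu_{p}\binom{m}{j}\ge\nu_{p}(m)-\nu_{p}(j)\ge\alpha-\nu_{p}(j)$ yields $\nu_{p}\bigl(\binom{m}{j}p^{j}\bigr)\ge\alpha+j-\nu_{p}(j)$, and for $p\ge 3$ one checks $j-\nu_{p}(j)\ge 2$ for every $j\ge 2$; hence all of these terms vanish modulo $p^{\alpha+2}$. This produces the refined identity
$$
(1-x)^{pm}\equiv(1-x^{p})^{m}+p^{\alpha+1}k\,g(x)(1-x^{p})^{m-1}\Mod{p^{\alpha+2}}.
$$

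Next I would substitute this into the functional equation $(1-x)^{pm}H_{m}(x)=(1-x^{p})^{m}H_{m}(x^{p})$ and divide through by $(1-x^{p})^{m}$, which is legitimate because $(1-x^{p})^{m}$ is a unit in $\Z[[x]]$ and division by a unit preserves congruences modulo $p^{\alpha+2}$. This gives
$$
H_{m}(x)+p^{\alpha+1}k\,\frac{g(x)}{1-x^{p}}H_{m}(x)\equiv H_{m}(x^{p})\Mod{p^{\alpha+2}}.
$$
Since the error term already carries the factor $p^{\alpha+1}$, inside it I only need $H_{m}(x)$ modulo $p$; by Lemma \ref{parylem1} every nonconstant coefficient of $H_{m}$ is divisible by $p^{\alpha+1}$, so $H_{m}(x)\equiv 1\Mod{p}$ and the relation simplifies to
$$
H_{m}(x)\equiv H_{m}(x^{p})-p^{\alpha+1}k\,\frac{g(x)}{1-x^{p}}\Mod{p^{\alpha+2}}.
$$

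The final step is to compare coefficients of $x^{pn}$. On the left this coefficient is $d_{m}(pn)$, and in $H_{m}(x^{p})=\sum_{n}d_{m}(n)x^{pn}$ it is $d_{m}(n)$. The key observation is that the correction series contributes nothing at these exponents: since $g(x)$ is supported on the exponents $1,\dots,p-1$ and $(1-x^{p})^{-1}=\sum_{j\ge 0}x^{pj}$, every monomial of $g(x)/(1-x^{p})$ has exponent of the form $i+pj$ with $1\le i\le p-1$, which is never divisible by $p$. Hence the coefficient of $x^{pn}$ in $g(x)/(1-x^{p})$ is $0$ for all $n$, and we conclude $d_{m}(pn)\equiv d_{m}(n)\Mod{p^{\alpha+2}}$.

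I expect the main obstacle to be the valuation bookkeeping in the first paragraph, namely establishing that exactly the $j=0$ and $j=1$ terms of the binomial expansion survive modulo $p^{\alpha+2}$; this is precisely where the hypothesis $p\ge 3$ enters, through the inequality $j-\nu_{p}(j)\ge 2$ (which fails for $p=2$, $j=2$). Once this second-order expansion is in hand, the passage to $H_{m}$ and the coefficient extraction are essentially formal, the latter hinging only on the support of $g$ avoiding multiples of $p$.
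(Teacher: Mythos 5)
Your proof is correct, and it takes a genuinely different route from the paper's. The paper proves Lemma \ref{parylem2} by induction on $n$ at the level of coefficients: it applies the recurrence of Lemma \ref{recrel} to $d_{m}(pn)$, discards the terms with $p\nmid i$ (using $p\mid\binom{pm}{i}$ together with Lemma \ref{parylem1}), and compares what survives with the recurrence for $d_{m}(n)$ by means of the induction hypothesis and Lucas' congruence $\binom{pm}{pi}\equiv\binom{m}{i}\Mod{p}$. You never touch the recurrence: you upgrade the congruence $(1-x)^{pm}\equiv(1-x^{p})^{m}\Mod{p^{\alpha+1}}$ behind Lemma \ref{parylem1} to a second-order expansion modulo $p^{\alpha+2}$ with the explicit correction $p^{\alpha+1}k\,g(x)(1-x^{p})^{m-1}$, substitute into the functional equation, and conclude from the fact that $g(x)/(1-x^{p})$ is supported on exponents prime to $p$. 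All your steps are sound: the bound $\nu_{p}\binom{m}{j}\geq\alpha-\nu_{p}(j)$, the inequality $j-\nu_{p}(j)\geq 2$ for $j\geq 2$ and $p\geq 3$ (this is exactly the paper's Lemma \ref{paryneglem1}, proved there for a different purpose), the division by the unit $(1-x^{p})^{m}$, and the reduction $H_{m}(x)\equiv 1\Mod{p}$ inside the error term. Comparing the two approaches: yours is induction-free and needs no congruences for binomial coefficients at all; in particular it sidesteps the boundary term $i=n$ of the paper's induction, where the cited Lucas congruence gives $\binom{pm}{pn}\equiv\binom{m}{n}$ only modulo $p$ whereas that step needs it modulo $p^{\alpha+2}$ --- a point the paper only fully justifies later via the Wolstenholme-type Lemma \ref{parylem4}. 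Your explicit correction term also yields more than the statement: reading off the coefficient of $x^{N}$ with $p\nmid N$ gives $d_{m}(N)\equiv pm\left((N\bmod p)^{-1}\bmod p\right)\Mod{p^{\alpha+2}}$, which is essentially Lemma \ref{parylem3} and the key congruence of Theorem \ref{parythm1} in one stroke. What the paper's route buys in exchange is that it stays entirely within the elementary recurrence framework already set up in Lemma \ref{recrel}, with no manipulation of power-series inverses.
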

\begin{proof}
We use the induction argument. For $n=1$ we have by Lemma \ref{recrel}:
\begin{align*}
d_{m}(p)= & -\sum_{i=1}^{p}(-1)^{i}\binom{pm}{i}d_{m}(p-i)+\sum_{i=0}^{1}(-1)^{i}\binom{m}{i}d_{m}(1-i) \\
 = & -\sum_{i=1}^{p}(-1)^{i}\binom{pm}{i}d_{m}(p-i)+d_{m}(1)-md_{m}(0).
\end{align*}
Observe, that if $p\nmid i$ then $p\mid \binom{pm}{i}$. From Lemma \ref{parylem1} we also have $p^{\alpha+1}\mid d_{m}(p-i)$ for $i=1,\ldots ,p-1$. Hence, $\binom{pm}{i}d_{m}(p-i)\equiv 0\Mod{p^{\alpha+2}}$ for $i=1,\ldots ,p-1$, so
\begin{align*}
d_{m}(p)\equiv \binom{pm}{p}d_{m}(0)+d_{m}(1)-md_{m}(0)\equiv md_{m}(0)+d_{m}(1)-md_{m}(0)\equiv d_{m}(1)\Mod{p^{\alpha +2}}.
\end{align*}
	
Let us assume that the statement is true for all numbers less than some number $n$. We want to prove it for $n$. Lemma \ref{recrel} implies
\begin{align*}
d_{m}(pn)= & -\sum_{i=1}^{\min\{pm,pn\}}(-1)^{i}\binom{pm}{i}d_{m}(pn-i)+\sum_{i=0}^{\min\{m,n\}}(-1)^{i}\binom{m}{i}d_{m}(n-i) \\
 \equiv & -\sum_{i=1}^{\min\{m,n\}}(-1)^{pi}\binom{pm}{pi}d_{m}(pn-pi)+\sum_{i=0}^{\min\{m,n\}}(-1)^{i}\binom{m}{i}d_{m}(n-i) \\
 = & -\sum_{i=1}^{\min\{m,n\}}(-1)^{i}\binom{pm}{pi}d_{m}(p(n-i))+\sum_{i=0}^{\min\{m,n\}}(-1)^{i}\binom{m}{i}d_{m}(n-i) \\
 \equiv & -\sum_{i=1}^{\min\{m,n\}}(-1)^{i}\binom{m}{i}d_{m}(n-i)+\sum_{i=0}^{\min\{m,n\}}(-1)^{i}\binom{m}{i}d_{m}(n-i) \\
 = & d_{m}(n)\Mod{p^{\alpha +2}}.
\end{align*}
In the above computations we used the induction hypothesis, Lemma \ref{parylem1} and the congruence $\binom{pm}{pi}\equiv \binom{m}{i}\Mod p$ which is a consequence of Lucas theorem. The result follows.
\end{proof}


\begin{lem}\label{parylem3}
Let $m=p^{\alpha}$. Then
\begin{align*}
d_{m}(n)\equiv \left\{\begin{array}{ll}
(-1)^{n+1}\binom{pm}{n}\Mod{p^{\alpha+2}}& \text{if } n\leq pm, \\
d_{m}(n-pm)\Mod{p^{\alpha+2}}, & \text{if } n>pm
\end{array}\right.
\end{align*}
for all $n\geq 1$ such that $p\nmid n$.
\end{lem}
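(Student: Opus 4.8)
The plan is to feed the hypothesis $p\nmid n$ directly into the recurrence of Lemma \ref{recrel} and then read off which terms survive modulo $p^{\alpha+2}$, using the exact $p$-adic valuations of the binomial coefficients $\binom{pm}{i}$ together with Lemma \ref{parylem1}. Since $p\nmid n$, the indicator $[p\mid n]$ kills the inner sum in Lemma \ref{recrel}, leaving
\begin{equation*}
d_{m}(n)=-\sum_{i=1}^{\min\{pm,n\}}(-1)^{i}\binom{pm}{i}d_{m}(n-i).
\end{equation*}
Because $m=p^{\alpha}$ we have $pm=p^{\alpha+1}$, and the arithmetic input I will use is the standard identity $\nu_{p}\binom{p^{\alpha+1}}{i}=\alpha+1-\nu_{p}(i)$, valid for $1\leq i\leq p^{\alpha+1}$; it follows from $i\binom{p^{\alpha+1}}{i}=p^{\alpha+1}\binom{p^{\alpha+1}-1}{i-1}$ and the fact that $\binom{p^{\alpha+1}-1}{i-1}$ is prime to $p$.

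I would split the sum according to whether $p\mid i$. For the indices with $p\nmid i$ one has $\nu_{p}\binom{pm}{i}=\alpha+1$; if moreover $n-i\geq 1$, then Lemma \ref{parylem1} gives $\nu_{p}(d_{m}(n-i))\geq \alpha+1$, so the product vanishes modulo $p^{2\alpha+2}$, hence a fortiori modulo $p^{\alpha+2}$. The only index of this type with $n-i=0$ is $i=n$ (note $p\nmid n$ is needed for $i=n$ to belong to this class), and it occurs precisely when $n\leq pm$; it contributes $-(-1)^{n}\binom{pm}{n}d_{m}(0)=(-1)^{n+1}\binom{pm}{n}$. Thus the $p\nmid i$ part equals $(-1)^{n+1}\binom{pm}{n}$ when $n\leq pm$ and is $\equiv 0\Mod{p^{\alpha+2}}$ when $n>pm$.

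For the indices with $p\mid i$, write $i=pj$ with $1\leq j\leq\min\{m,\lfloor n/p\rfloor\}$; here $n-pj$ is again prime to $p$, so $n-pj\geq 1$ automatically and Lemma \ref{parylem1} applies. Now $\nu_{p}\binom{pm}{pj}=\alpha-\nu_{p}(j)$, so $\nu_{p}\big(\binom{pm}{pj}d_{m}(n-pj)\big)\geq 2\alpha+1-\nu_{p}(j)$, which is $\geq\alpha+2$ unless $\nu_{p}(j)\geq\alpha$. In the range $j\leq m=p^{\alpha}$ the only such $j$ is $j=m$, and it lies in the summation exactly when $n>pm$. Since $p$ and $m=p^{\alpha}$ are odd, $(-1)^{pm}=-1$, so this surviving term contributes $-(-1)^{pm}\binom{pm}{pm}d_{m}(n-pm)=d_{m}(n-pm)$. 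Assembling the two parts, which are supported on the complementary ranges $n\leq pm$ and $n>pm$ (the case $n=pm$ being excluded by $p\nmid n$), gives exactly the asserted congruence.

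I expect the main obstacle to be the valuation bookkeeping: one must verify that every term other than the two distinguished ones, $i=n$ and $i=pm$, is annihilated modulo $p^{\alpha+2}$, which is where the precise valuation $\nu_{p}\binom{p^{\alpha+1}}{i}=\alpha+1-\nu_{p}(i)$ (rather than a mere lower bound) does the work of isolating the single surviving index $j=m$, and one must be careful with the sign $(-1)^{pm}=-1$ that turns the $i=pm$ term into $+d_{m}(n-pm)$. No induction is needed here, since the recurrence refers only to strictly smaller indices for which Lemma \ref{parylem1} already supplies the required valuations.
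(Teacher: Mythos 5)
Your proof is correct and takes essentially the same route as the paper's: substitute $p\nmid n$ into the recurrence of Lemma \ref{recrel} and use valuation bookkeeping with Lemma \ref{parylem1} to show that every term except $i=n$ (when $n\leq pm$) and $i=pm$ (when $n>pm$) vanishes modulo $p^{\alpha+2}$. If anything, your explicit use of the exact formula $\nu_{p}\binom{p^{\alpha+1}}{i}=\alpha+1-\nu_{p}(i)$ handles the indices with $p\mid i$ more carefully than the paper, which only records that $p\mid\binom{pm}{i}$ when $p\nmid i$ and leaves the $p\mid i$ case implicit.
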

\begin{proof}
Let $n\in\N_{+}$ and assume that $p\nmid n$. Then Lemma \ref{recrel} implies the equality
\begin{align*}
d_{m}(n)=\sum_{i=1}^{\min\{pm,n\}}(-1)^{i+1}\binom{pm}{i}d_{m}(n-i).
\end{align*}
If $p\nmid i$ then $p|\binom{pm}{i}$, and by Lemma \ref{parylem1} we get the divisibility property $p^{\alpha +1}|d_{m,p}(n-i)$ which holds for $n>i$ . Therefore, if $n<pm$, then the only term in the above sum that may not be divisible by $p^{\alpha+2}$ is the one corresponding to $i=n$, i.e., $(-1)^{n+1}\binom{pm}{n}d_{m,p}(0)=(-1)^{n+1}\binom{pm}{n}$. Similarly, if $n>pm$, then the only such term can be the one corresponding to $i=pm$, i.e., $(-1)^{pm+1}\binom{pm}{pm}d_{m}(n-pm)=d_{m}(n-pm)$.
\end{proof}

Now we are ready to prove the following general result concerning the expression of $\nu_{p}(d_{m}(n))$ for $m\in\N_{+}$.
\begin{thm}\label{parythm1}
Let $p\in\mathbb{P}_{\geq 3}$ and $m\in\N$. Let $n\in\N_{\geq 1}$ and $n=n_{s}p^{s}+\ldots +n_{t}p^{t}$ be (the unique) representation of $n$ in base $p$. Here $s\leq t$ and $n_{s}\neq 0$. Then
	\begin{align*}
	d_{m}(n)\equiv pm\left(n_{s}^{-1}\Mod p\right)\Mod{p^{\nu_{p}(m)+2}}.
	\end{align*}
	In particular,
	\begin{equation*}
	\nu_{p}(d_{m}(n))=\begin{cases}\begin{array}{lll}
	0,           &  & \mbox{if }\;n=0, \\
	\nu_{p}(m)+1,&  & \mbox{if }\; n\geq 1.
	\end{array}
	\end{cases}
	\end{equation*}
\end{thm}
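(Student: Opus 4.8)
The plan is to reduce the general case to the pure prime-power case $m=p^{\alpha}$ and then to evaluate a single binomial coefficient modulo $p^{\alpha+2}$. Write $m=p^{\alpha}k$ with $\alpha=\nu_{p}(m)$ and $p\nmid k$. First I would exploit the multiplicativity $H_{m}(x)=G_{p}(x)^{m}=\big(G_{p}(x)^{p^{\alpha}}\big)^{k}=H_{p^{\alpha}}(x)^{k}$, so that $d_{m}(n)$ is the $k$-fold Cauchy convolution of the sequence $(d_{p^{\alpha}}(j))_{j\in\N}$. Since $d_{p^{\alpha}}(0)=1$ and, by Lemma \ref{parylem1}, $\nu_{p}(d_{p^{\alpha}}(j))\geq\alpha+1$ for every $j\geq 1$, any convolution term involving two or more positive indices has $p$-adic valuation at least $2(\alpha+1)\geq\alpha+2$ and hence vanishes modulo $p^{\alpha+2}$. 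Exactly $k$ terms survive (those in which a single index equals $n$), yielding the reduction
$$
d_{m}(n)\equiv k\,d_{p^{\alpha}}(n)\Mod{p^{\alpha+2}}.
$$

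It therefore remains to prove the theorem for $m=p^{\alpha}$, and here I would first reduce to the case $p\nmid n$. If $s=\nu_{p}(n)$, then iterating Lemma \ref{parylem2} gives $d_{p^{\alpha}}(n)\equiv d_{p^{\alpha}}(n/p^{s})\Mod{p^{\alpha+2}}$, and $n/p^{s}$ has lowest base-$p$ digit $n_{s}$, so $n/p^{s}\equiv n_{s}\not\equiv 0\Mod p$. Thus I may assume $p\nmid n$, where Lemma \ref{parylem3} applies. For $1\leq n\leq pm=p^{\alpha+1}$ I would evaluate $\binom{p^{\alpha+1}}{n}$ modulo $p^{\alpha+2}$ from the identity $\binom{p^{\alpha+1}}{n}=\frac{p^{\alpha+1}}{n}\binom{p^{\alpha+1}-1}{n-1}$: since $p\nmid n$ one has $\nu_{p}\binom{p^{\alpha+1}}{n}=(\alpha+1)+\nu_{p}\binom{p^{\alpha+1}-1}{n-1}$, and Lucas' theorem (all base-$p$ digits of $p^{\alpha+1}-1$ equal $p-1$, and $\binom{p-1}{c}\equiv(-1)^{c}\Mod p$) gives $\binom{p^{\alpha+1}-1}{n-1}\equiv(-1)^{n-1}\Mod p$, a $p$-adic unit. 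Combining this with Lemma \ref{parylem3} and its sign $(-1)^{n+1}$, the two signs cancel and I obtain $d_{p^{\alpha}}(n)\equiv p^{\alpha+1}\big(n^{-1}\Mod p\big)\Mod{p^{\alpha+2}}$. For $n>pm$ the periodicity $d_{p^{\alpha}}(n)\equiv d_{p^{\alpha}}(n-pm)$ of Lemma \ref{parylem3}, together with $n-pm\equiv n\Mod p$ and induction on $n$, extends the same formula to all $n$ coprime to $p$.

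Finally I would assemble the pieces. Using $n\equiv n_{s}\Mod p$ (hence $n^{-1}\equiv n_{s}^{-1}\Mod p$) and the two reductions above, $d_{m}(n)\equiv k\,d_{p^{\alpha}}(n)\equiv k\,p^{\alpha+1}\big(n_{s}^{-1}\Mod p\big)=pm\big(n_{s}^{-1}\Mod p\big)\Mod{p^{\alpha+2}}$, which is precisely the claimed congruence. The ``in particular'' statement is then immediate: the right-hand side has $p$-adic valuation exactly $\alpha+1=\nu_{p}(m)+1<\nu_{p}(m)+2$, so the congruence determines the valuation and forces $\nu_{p}(d_{m}(n))=\nu_{p}(m)+1$ for $n\geq 1$, while $d_{m}(0)=1$ gives valuation $0$.

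The main obstacle, and the genuine arithmetic content of the argument, is the exact evaluation of $\binom{p^{\alpha+1}}{n}$ modulo $p^{\alpha+2}$ — equivalently, the identification of the unit $\binom{p^{\alpha+1}-1}{n-1}\equiv(-1)^{n-1}\Mod p$. This step crucially uses that $p$ is odd: the parity identity $s_{p}(n-1)\equiv n-1\Mod 2$ (valid because each $p^{j}-1$ is even) is what converts the digit-sum sign $(-1)^{s_{p}(n-1)}$ produced by Lucas' theorem into the clean sign $(-1)^{n-1}$ needed to cancel the sign coming from Lemma \ref{parylem3}. Everything else is bookkeeping resting on the three preparatory lemmas, with the convolution reduction of the first paragraph being the one conceptual shortcut that avoids re-running the full recurrence for non-prime-power $m$.
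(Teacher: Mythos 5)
Your proof is correct and follows essentially the same route as the paper's: the same convolution reduction from $m=p^{\alpha}k$ to $m=p^{\alpha}$ via Lemma \ref{parylem1}, and the same use of Lemmas \ref{parylem2} and \ref{parylem3} to reduce first to $p\nmid n$ and then to the evaluation of a single binomial coefficient modulo $p^{\alpha+2}$. The only cosmetic difference is that you identify the unit $\binom{p^{\alpha+1}-1}{n-1}\equiv(-1)^{n-1}\Mod{p}$ via Lucas' theorem and the digit-sum parity argument, whereas the paper obtains the same quantity by telescoping the product $(p^{\alpha+1}-1)\cdots(p^{\alpha+1}-n+1)/n!$ factor by factor.
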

\begin{proof}
At the beginning observe, that for each $m$ we have $d_{m}(0)=1$, so $\nu_{p}(d_{m}(0))=0$.

Let us assume that $n\geq 1$ and $m=p^{\alpha}$ for some $\alpha\geq 0$. By Lemma \ref{parylem1} it is reasonable to consider the sequence ${\bf a}_{\alpha}=(a_{\alpha}(n))_{n\in\N_{+}}$ of elements of $\Z/p\Z$, where
$$
a_{\alpha}(n):=\frac{d_{m}(n)}{p^{\alpha +1}}\Mod{p}.
$$
Lemmas \ref{parylem2} and \ref{parylem3} imply the following recurrence relations:
\begin{align*}
\left\{\begin{array}{ll}
a_{\alpha}(n)=\frac{(-1)^{n+1}}{p^{\alpha +1}}\binom{p^{\alpha +1}}{n}\Mod{p}, & \text{if } n<p^{\alpha +1} \text{ and } p\nmid n, \\
a_{\alpha}(n)=a_{\alpha}(n-p^{\alpha+1}), & \text{if } n>p^{\alpha +1} \text{ and } p\nmid n, \\
a_{\alpha}(pn)=a_{\alpha}(n).
\end{array}\right.
\end{align*}
Observe, that if $n<p^{\alpha +1}$ and $p\nmid n$ then
\begin{align*}
\frac{(-1)^{n+1}}{p^{\alpha +1}}\binom{p^{\alpha +1}}{n}= & (-1)^{n+1}\frac{(p^{\alpha+1}-1)(p^{\alpha+1}-2)\cdot\ldots\cdot (p^{\alpha+1}-n+1)}{n!} \\
 \equiv & (-1)^{n+1}\frac{(-1)(-2)\cdot\ldots\cdot (-n+1)}{n!}=(-1)^{n+1}\frac{(-1)^{n-1}(n-1)!}{n!}=\frac{1}{n}\Mod{p}.
\end{align*}
Thus $a_{\alpha}(n)=\frac{1}{n}\Mod p$ for $n<p^{\alpha +1}$ such that $p\nmid n$. If $n$ is arbitrary, then recurrence relations for $a_{\alpha}(n)$ imply
\begin{align*}
a_{\alpha}(n)=a_{\alpha}\left(\frac{n}{p^{\nu_{p}(n)}}\right)=a_{\alpha}\left(\frac{n}{p^{\nu_{p}(n)}}\Mod{p^{\alpha+1}}\right)=\frac{1}{\frac{n}{p^{\nu_{p}(n)}}\Mod{p^{\alpha+1}}}\Mod{p}.
\end{align*}
Write $n=n_{s}p^{s}+\ldots +n_{t}p^{t}$, $s\leq t$, $n_{s}>0$. Then
\begin{align*}
\frac{n}{p^{\nu_{p}(n)}}\Mod{p^{\alpha+1}}=n_{s}+\ldots +n_{s+\alpha}p^{\alpha}=:N
\end{align*}
(we define $n_{i}=0$ for $i>t$ if required). Observe, that for $M=n_{s}^{-1}\Mod p$ we have $MN\equiv 1\Mod p$. Hence, $a_{\alpha}(n)=n_{s}^{-1}\Mod p$.

From the definition we get
\begin{align*}
d_{m}(n)\equiv p^{\alpha+1}a_{\alpha}(n)=pm\left(n_{s}^{-1}\Mod p\right)\Mod{p^{\alpha+2}}.
\end{align*}
Hence the result is true if $m$ is a power of $p$ or $m=1$.

If $m=p^{\alpha}k$, where $k\in\N_{+}$ and $p\nmid k$, then
\begin{align*}
H_{m}(x)=\left(H_{p^{\alpha}}(x)\right)^{k},
\end{align*}
so
\begin{align*}
d_{m}(n)=\sum_{a_{1}+\ldots +a_{k}=n}d_{p^{\alpha}}(a_{1})\cdot\ldots\cdot d_{p^{\alpha}}(a_{k}).
\end{align*}
We note that, if at least two numbers among $a_{1},\ldots ,a_{k}$ are non--zero, then by Lemma \ref{parylem1} we get $d_{p^{\alpha}}(a_{1})\cdot\ldots\cdot d_{p^{\alpha}}(a_{k})\equiv 0\Mod{p^{\alpha+2}}$. Hence, the expression $d_{p^{\alpha}}(a_{1})\cdot\ldots\cdot d_{p^{\alpha}}(a_{k})$ may not be divisible by $p^{\alpha +2}$ only if $a_{j}=n$ for some $j$ and $a_{i}=0$ for all $i\neq j$. Thus
\begin{align*}
d_{m}(n)\equiv kd_{p^{\alpha}}(n)\equiv \frac{kp^{\alpha+1}}{n_{s}}=pm\left(n_{s}^{-1}\Mod p\right) \Mod{p^{\alpha +2}},
\end{align*}
and our result follows.
\end{proof}

One can ask, whether it is possible to give a theorem similar to Theorem \ref{parythm1} but for negative integers $m$? In order to give an answer, let us introduce the sequence $(D_{p}(n))_{n=1}^{\infty}$ defined as
\begin{align*}
	\Delta_{p}(x):=\prod_{n=0}^{\infty}(1-x^{p^{n}})^{p-1}=\sum_{n=0}^{\infty}D_{p}(n)x^{n}.
\end{align*}
Observe, that
\begin{align*}
	H_{-1}(x)=(1-x)\Delta_{p}(x)=1+\sum_{n=1}^{\infty}\left(D_{p}(n)-D_{p}(n-1)\right)x^{n}.
\end{align*}
Therefore, $d_{-1}(n)=D_{p}(n)-D_{p}(n-1)$ for $n\geq 1$, and $d_{-1}(0)=1$. From \cite[Lemma 2.3]{Ula} we get the equality
\begin{align*}
	\nu_{p}(d_{-1}(n))=\left\{\begin{array}{ll}
		0, & \text{ if } n=0, \\
		1, & \text{ if } n\geq 1.
	\end{array}\right.
\end{align*}

We will also need the following simple fact.

\begin{lem}\label{paryneglem1}
	Let $i\in\N_{\geq2}$ and $p\in\mathbb{P}_{\geq 3}$. Then
	\begin{align*}
		i\geq \nu_{p}(i)+2.
	\end{align*}
\end{lem}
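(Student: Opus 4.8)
The statement to prove is the elementary inequality $i \geq \nu_p(i) + 2$ for all $i \geq 2$ and $p \geq 3$.

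The plan is to bound $\nu_p(i)$ from above by a quantity that grows far more slowly than $i$, and then to check the resulting inequality directly. The key observation is that if $\nu_p(i) = k$, then $p^k \mid i$, which forces $i \geq p^k$. Thus it suffices to prove that $p^k \geq k + 2$ for all $k \in \N$ (with $p \geq 3$), since then $i \geq p^{\nu_p(i)} \geq \nu_p(i) + 2$. This reduces the claim to a clean statement about powers of $p$ versus a linear function of the exponent.

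First I would dispose of the small values of $k$ by hand: for $k = 0$ the inequality $p^0 = 1 \geq 0 + 2 = 2$ fails, so the reduction as stated needs a small adjustment — the point is that $k = \nu_p(i) = 0$ corresponds to $p \nmid i$, in which case $\nu_p(i) + 2 = 2 \leq i$ holds trivially because $i \geq 2$ by hypothesis. For $k \geq 1$ I would instead verify $p^k \geq k + 2$ using $p \geq 3$: at $k = 1$ we have $p \geq 3 = 1 + 2$, giving equality at the extreme case $(p, k) = (3, 1)$.

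For the inductive step on $k \geq 1$, I would assume $p^k \geq k + 2$ and show $p^{k+1} \geq k + 3$. Since $p \geq 3$, we have $p^{k+1} = p \cdot p^k \geq 3 p^k \geq 3(k+2) = 3k + 6 \geq k + 3$, the last inequality holding because $2k + 3 \geq 0$. This closes the induction and yields $p^k \geq k + 2$ for every $k \geq 1$.

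There is no real obstacle here; the only subtlety is organizing the case split so that $\nu_p(i) = 0$ is handled by the hypothesis $i \geq 2$ rather than by the bound $p^k \geq k + 2$, which is false at $k = 0$. Combining the two cases — $\nu_p(i) = 0$ handled directly, and $\nu_p(i) = k \geq 1$ handled via $i \geq p^k \geq k + 2$ — gives the desired inequality $i \geq \nu_p(i) + 2$ in all cases, completing the proof.
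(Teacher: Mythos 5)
Your proof is correct, but it takes a genuinely different route from the paper's. Both arguments rest on the same basic fact $i \geq p^{\nu_{p}(i)}$, but you proceed directly: you split off the case $\nu_{p}(i)=0$ (where the hypothesis $i\geq 2$ already gives $i \geq 2 = \nu_{p}(i)+2$) and then prove $p^{k} \geq k+2$ for all $k\geq 1$, $p\geq 3$ by induction on $k$, the base case $p \geq 3 = 1+2$ being the only place where $p\geq 3$ enters. The paper instead argues by contradiction: since one always has $i\geq \nu_{p}(i)+1$, a failure of the lemma would force $i=\nu_{p}(i)+1$; then the chain $i\geq p^{\nu_{p}(i)}\geq 2^{\nu_{p}(i)}\geq 1+\nu_{p}(i)=i$ (Bernoulli's inequality in base $2$) collapses to equalities, giving $p^{\nu_{p}(i)}=2^{\nu_{p}(i)}$, hence $\nu_{p}(i)=0$ and $i=1$, contradicting $i\geq 2$. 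The paper's version is slightly slicker, using $p\geq 3$ only at the very last step to rule out $p^{k}=2^{k}$ with $k\geq 1$, while yours is more mechanical but makes completely explicit where each hypothesis is needed ($i\geq 2$ for the case $\nu_{p}(i)=0$, and $p\geq 3$ for the induction). Your handling of the subtlety that $p^{k}\geq k+2$ fails at $k=0$ — routing that case through the hypothesis $i\geq 2$ instead — is exactly right, and nothing in your argument has a gap.
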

\begin{proof}
	Let $i\in\N_{\geq 1}$. We always have $i>\nu_{p}(i)$, i.e., $i\geq \nu_{p}(i)+1$. Thus if $i<\nu_{p}(i)+2$, then $i=\nu_{p}(i)+1$. From the Bernoulli's inequality we get:
	\begin{align*}
		i\geq p^{\nu_{p}(i)}\geq 2^{\nu_{p}(i)}=(1+1)^{\nu_{p}(i)}\geq 1+\nu_{p}(i)=i.
	\end{align*}
	Hence, $p^{\nu_{p}(i)}=2^{\nu_{p}(i)}$, so $\nu_{p}(i)=0$ and thus $i=1$.
\end{proof}

We are ready to prove the following analogue of Theorem \ref{parythm1}.

\begin{thm}\label{parythm2}
	Let $m\in\N_{\geq 1}$. Then
	\begin{align*}
		d_{-m}(n)\equiv md_{-1}(n)\Mod{p^{\nu_{p}(m) +2}}.
	\end{align*}
	In particular,
	\begin{align*}
		\nu_{p}(d_{-m}(n))=\left\{\begin{array}{ll}
				0,           &  \text{ if } n=0, \\
				\nu_{p}(m)+1,&  \text{ if } n\geq 1.
			\end{array}\right.
	\end{align*}
\end{thm}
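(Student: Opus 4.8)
The plan is to exploit multiplicativity of the generating function, exactly as in the proof of Theorem \ref{parythm1}, but now expanding a power of $H_{-1}$. Since $H_{-m}(x)=G_{p}(x)^{-m}=H_{-1}(x)^{m}$, comparing coefficients gives
\begin{align*}
d_{-m}(n)=\sum_{a_{1}+\ldots+a_{m}=n}d_{-1}(a_{1})\cdots d_{-1}(a_{m}),
\end{align*}
the sum running over $m$-tuples of non-negative integers summing to $n$. First I would dispose of $n=0$: the only tuple is the zero tuple, so $d_{-m}(0)=1$ and $\nu_{p}(d_{-m}(0))=0$, which already settles the $n=0$ line of the ``in particular'' claim. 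For $n\geq 1$ I would classify the tuples by the number $k\geq 1$ of their nonzero entries.

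The key inputs are the two facts recalled before the statement, namely $d_{-1}(0)=1$ and $\nu_{p}(d_{-1}(j))=1$ for $j\geq 1$. They imply that a term with exactly $k$ nonzero entries has $p$-adic valuation exactly $k$: the zero entries contribute factors $1$, and each nonzero entry contributes valuation $1$. The contribution of the tuples with a single nonzero entry is $\binom{m}{1}d_{-1}(n)=m\,d_{-1}(n)$, which is the main term. It therefore remains to show that the total contribution of the tuples with $k\geq 2$ nonzero entries is divisible by $p^{\nu_{p}(m)+2}$.

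Fixing $k\geq 2$, the contribution of such tuples is $\binom{m}{k}\sum_{b_{1}+\ldots+b_{k}=n,\ b_{i}\geq 1}d_{-1}(b_{1})\cdots d_{-1}(b_{k})$, where the inner sum has valuation $\geq k$ since every summand has valuation exactly $k$. Using the identity $k\binom{m}{k}=m\binom{m-1}{k-1}$ I would bound $\nu_{p}\!\left(\binom{m}{k}\right)\geq \nu_{p}(m)-\nu_{p}(k)$, and combine these to obtain
\begin{align*}
\nu_{p}\!\left(\binom{m}{k}\sum_{b_{1}+\ldots+b_{k}=n,\ b_{i}\geq 1}d_{-1}(b_{1})\cdots d_{-1}(b_{k})\right)\geq \nu_{p}(m)-\nu_{p}(k)+k.
\end{align*}
The decisive step --- and the only place a genuine inequality is needed --- is Lemma \ref{paryneglem1}, which yields $k\geq \nu_{p}(k)+2$ for every $k\geq 2$, so the right-hand side is $\geq \nu_{p}(m)+2$. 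Summing over $k\geq 2$ gives $d_{-m}(n)\equiv m\,d_{-1}(n)\Mod{p^{\nu_{p}(m)+2}}$. Finally, since $\nu_{p}(d_{-1}(n))=1$ for $n\geq 1$, the right-hand side has valuation $\nu_{p}(m)+1<\nu_{p}(m)+2$, so the congruence pins down $\nu_{p}(d_{-m}(n))=\nu_{p}(m)+1$. The only mild obstacle I anticipate is purely organizational: checking that the classification by number of nonzero parts correctly reproduces the factor $\binom{m}{k}$ and that the valuation estimate is applied uniformly in $k$; the arithmetic heart is the one-line bound above together with Lemma \ref{paryneglem1}.
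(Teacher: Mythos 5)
Your proof is correct, and it streamlines the paper's argument into a single step. The paper proves the theorem in two stages: first for $m=p^{\alpha}$, by expanding $H_{-p^{\alpha}}(x)=\left(H_{-1}(x)\right)^{p^{\alpha}}$, grouping tuples by the number $i$ of nonzero entries, and estimating $\nu_{p}\bigl(\binom{p^{\alpha}}{i}\sum d_{-1}(j_{1})\cdots d_{-1}(j_{i})\bigr)\geq \alpha-\nu_{p}(i)+i\geq\alpha+2$ via Lemma \ref{paryneglem1}; then for general $m=p^{\alpha}k$ with $p\nmid k$, by expanding $H_{-m}(x)=\left(H_{-p^{\alpha}}(x)\right)^{k}$ a second time, where any term with at least two nonzero entries has valuation at least $2(\alpha+1)\geq\alpha+2$ because each nonzero factor $d_{-p^{\alpha}}(j)$ already has valuation $\alpha+1$. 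You collapse both stages into one by expanding $H_{-1}(x)^{m}$ directly for arbitrary $m$, replacing the exact valuation $\nu_{p}\binom{p^{\alpha}}{i}=\alpha-\nu_{p}(i)$ by the general bound $\nu_{p}\binom{m}{k}\geq\nu_{p}(m)-\nu_{p}(k)$ coming from $k\binom{m}{k}=m\binom{m-1}{k-1}$; the decisive input, Lemma \ref{paryneglem1}, is then used exactly as in the paper, and your passage from the congruence to the exact valuation is the standard ultrametric argument, also as in the paper. What each route buys: yours is shorter and uniform in $m$ (one expansion, one estimate, no appeal to the factorization of $m$), while the paper's two-step route needs only the classical exact valuation of $\binom{p^{\alpha}}{i}$ and recycles verbatim the convolution trick from the proof of Theorem \ref{parythm1}. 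A minor point in your favour: by settling $n=0$ separately you make explicit that the congruence $d_{-m}(n)\equiv md_{-1}(n)$ is only asserted for $n\geq 1$ (at $n=0$ it would read $1\equiv m\Mod{p^{\nu_{p}(m)+2}}$, which is generally false), a caveat that the paper's final chain of power-series congruences glosses over by starting its sums at $n=0$.
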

\begin{proof}
	Firstly, we prove the result for $m=p^{\alpha}$. Observe, that
	\begin{align*}
		H_{-p^{\alpha}}(x)= & \left(H_{-1}(x)\right)^{p^{\alpha}}=\sum_{n=0}^{\infty}\left(\sum_{j_{1}+\ldots +{j_{p^{\alpha}}}=n}d_{-1}(j_{1})\cdot\ldots\cdot d_{-1}(j_{p^{\alpha}})\right)x^{n} \\
		= & 1+ \sum_{n=1}^{\infty}\left[\sum_{i=1}^{p^{\alpha}}\left(\binom{p^{\alpha}}{i}\sum_{\substack{ j_{1}+\ldots +j_{i}=n \\ j_{1},\ldots ,j_{i}\geq 1 }}d_{-1}(j_{1})\cdot\ldots\cdot d_{-1}(j_{i})\right)\right]x^{n},
	\end{align*}
	because $d_{-1}(0)=1$. From Lemma \ref{paryneglem1} we have for $i\geq 2$:
	\begin{align*}
		\nu_{p}\left(\binom{p^{\alpha}}{i}\sum_{\substack{ j_{1}+\ldots +j_{i}=n \\ j_{1},\ldots ,j_{i}\geq 1 }}d_{-1}(j_{1})\cdot\ldots\cdot d_{-1}(j_{i})\right)\geq \alpha-\nu_{p}(i)+i\geq \alpha +2.
	\end{align*}
	Therefore,
	\begin{align*}
		H_{-p^{\alpha}}(x)\equiv & 1+ \sum_{n=1}^{\infty}p^{\alpha}d_{-1}(n)x^{n}\Mod{p^{\alpha +2}}.
	\end{align*}
	Hence, the result is true when $m$ is a power of $p$ or $m=1$. If $m=p^{\alpha}k$, where $p\nmid k$, then similarly as in the proof of Theorem \ref{parythm1} we get:
	\begin{align*}
		H_{-m}(x)= & \left(H_{-p^{\alpha}}(x)\right)^{k}=\sum_{n=0}^{\infty}\left(\sum_{j_{1}+\ldots +j_{k}=n}d_{-p^{\alpha}}(j_{1})\cdot\ldots\cdot d_{-p^{\alpha}}(j_{k})\right)x^{n} \\
		\equiv & \sum_{n=0}^{\infty}kd_{-p^{\alpha}}(n)x^{n}\equiv \sum_{n=0}^{\infty}kp^{\alpha}d_{-1}(n)x^{n}= \sum_{n=0}^{\infty}md_{-1}(n)x^{n} \Mod{p^{\alpha +2}}.
	\end{align*}
	The result follows.
\end{proof}

\begin{cor}\label{cor1'}
If $m\leq -1$, then $d_{m}(n)\neq 0$ for $n\in\N$.
\end{cor}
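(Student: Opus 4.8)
The plan is to deduce the statement immediately from Theorem~\ref{parythm2}, in complete analogy with the proof of Corollary~\ref{cor1} in the case $p=2$. Writing $m=-m'$ with $m'\in\N_{\geq 1}$, Theorem~\ref{parythm2} furnishes the explicit evaluation
\begin{align*}
\nu_{p}(d_{m}(n))=\begin{cases}
0,            & \text{if } n=0,\\
\nu_{p}(m')+1,& \text{if } n\geq 1,
\end{cases}
\end{align*}
and in particular $\nu_{p}(d_{m}(n))$ is a finite non-negative integer for every $n\in\N$.

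The single key step is then to invoke the convention $\nu_{p}(0)=+\infty$ adopted in Section~\ref{sec1}. Were $d_{m}(n)=0$ to hold for some $n\in\N$, we would have $\nu_{p}(d_{m}(n))=+\infty$, contradicting the finiteness just recorded. Hence $d_{m}(n)\neq 0$ for all $n\in\N$, which is the assertion.

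I expect no genuine obstacle here: all of the substance has already been carried out in Theorem~\ref{parythm2}, and the corollary amounts to the elementary observation that a finite $p$-adic valuation precludes vanishing. It is nonetheless worth isolating as a separate statement, since for $m<0$ the series $H_{m}(x)$ has coefficients of fluctuating sign, so that the non-vanishing of the (signed) coefficients $d_{m}(n)$ is genuinely non-obvious without the valuation computation — this is precisely the phenomenon flagged in the introduction.
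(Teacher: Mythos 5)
Your proof is correct and matches the paper's own argument exactly: both deduce from Theorem~\ref{parythm2} that $\nu_{p}(d_{m}(n))$ is finite for all $n\in\N$, and then conclude non-vanishing from the convention $\nu_{p}(0)=+\infty$. Nothing is missing.
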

\begin{proof}
From our description of the $p$-adic valuation of $d_{m}(n)$ given in Theorem \ref{parythm2}, we see that $\nu_{p}(d_{m}(n))\neq +\infty$ for $n\in\N$. This is equivalent with the non-vanishing of $d_{m}(n)$ for $n\in\N$ and hence the result.
\end{proof}

As a very surprising corollary from Theorems \ref{parythm1} and \ref{parythm2} we get the next fact.

\begin{cor}\label{corsequences}
	If $p\in\mathbb{P}_{\geq 3}$ and $m\in\Z\setminus\{0\}$, then the sequence $\left(\frac{d_{m}(n)}{pm}\Mod p\right)_{n\in\N_{+}}$ depends only on the sign of $m$.
\end{cor}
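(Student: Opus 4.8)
The plan is to read the statement off directly from the explicit congruences recorded in Theorems \ref{parythm1} and \ref{parythm2}; the only genuine point requiring care is to justify dividing those congruences by $pm$ and then to track exactly what the resulting residue depends on. First I would observe that the quotient $\frac{d_{m}(n)}{pm}$ is a $p$-adic unit for every $n\in\N_{+}$, so that its residue modulo $p$ is a well-defined element of $\F_{p}$. Indeed, both theorems give $\nu_{p}(d_{m}(n))=\nu_{p}(m)+1=\nu_{p}(pm)$ for $n\geq 1$, whence $\nu_{p}\!\left(\frac{d_{m}(n)}{pm}\right)=0$.

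For the case $m\in\N_{+}$, write $\nu=\nu_{p}(m)$ and $pm=p^{\nu+1}u$ with $p\nmid u$. Theorem \ref{parythm1} then reads $d_{m}(n)\equiv p^{\nu+1}u\,(n_{s}^{-1}\Mod p)\Mod{p^{\nu+2}}$, where $n_{s}$ is the lowest nonzero base-$p$ digit of $n$. Since $\nu_{p}(d_{m}(n))=\nu+1$, I can write $d_{m}(n)=p^{\nu+1}w_{n}$ with $p\nmid w_{n}$, and cancelling $p^{\nu+1}$ from the congruence gives $w_{n}\equiv u\,(n_{s}^{-1}\Mod p)\Mod p$. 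Therefore $\frac{d_{m}(n)}{pm}=\frac{w_{n}}{u}\equiv n_{s}^{-1}\Mod p$, and the right-hand side depends only on $n$, not on the particular positive $m$.

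For the case $m<0$, write $m=-M$ with $M\in\N_{\geq 1}$, so that $\frac{d_{m}(n)}{pm}=-\frac{d_{-M}(n)}{pM}$. Theorem \ref{parythm2} gives $d_{-M}(n)\equiv M\,d_{-1}(n)\Mod{p^{\nu_{p}(M)+2}}$, and since $\nu_{p}(d_{-1}(n))=1$ for $n\geq 1$ I write $d_{-1}(n)=p\,e_{n}$ with $p\nmid e_{n}$. The same cancellation argument as above then yields $\frac{d_{-M}(n)}{pM}\equiv e_{n}=\frac{d_{-1}(n)}{p}\Mod p$, hence $\frac{d_{m}(n)}{pm}\equiv -\frac{d_{-1}(n)}{p}\Mod p$, which once again is independent of $M$.

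Combining the two cases exhibits the sequence explicitly: it equals $\left(n_{s}^{-1}\Mod p\right)_{n\in\N_{+}}$ when $m>0$ and $\left(-\frac{d_{-1}(n)}{p}\Mod p\right)_{n\in\N_{+}}$ when $m<0$, so in each regime it is the same sequence regardless of $m$, which is precisely the claim. I do not expect a substantive obstacle here, since the result is essentially a repackaging of the two main theorems; the single delicate step is the legitimacy of dividing a congruence modulo $p^{\nu+2}$ by the quantity $pm$, whose $p$-adic valuation is $\nu+1$. This is exactly why I first split off the unit part $u$ of $pm$ and invoke $\nu_{p}(d_{m}(n))=\nu+1$, so that the factor $p^{\nu+1}$ cancels cleanly and only a genuine congruence modulo $p$ remains.
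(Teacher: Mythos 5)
Your proposal is correct and follows exactly the route the paper intends: the paper states this corollary without proof as an immediate consequence of Theorems \ref{parythm1} and \ref{parythm2}, and your argument is precisely that deduction, with the division of the congruences by $pm$ (valid because $\nu_{p}(d_{m}(n))=\nu_{p}(pm)=\nu_{p}(m)+1$) carried out explicitly. Your explicit identification of the two limit sequences, $\left(n_{s}^{-1}\Mod p\right)_{n\in\N_{+}}$ for $m>0$ and $\left(-\frac{d_{-1}(n)}{p}\Mod p\right)_{n\in\N_{+}}$ for $m<0$, also matches the definitions of $y_{p}(n)$ and $z_{p}(n)$ used in Section \ref{sec4}.
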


\section{$p$-automaticity of the sequence $\left(\frac{d_{m}(n)}{pm}\Mod p\right)_{n\in\N_{+}}$}\label{sec4}

For $p\in\mathbb{P}_{\geq 3}$ and $n\in\N_{+}$, let us denote $y_{p}(n):=\frac{d_{m}(n)}{pm}\Mod p$ if $m>0$, and $z_{p}(n):=\left(-\frac{d_{m}(n)}{pm}\right){\Mod p}$ if $m<0$. In this section we will study the sequences $(y_{p}(n))_{n\in\N_{+}}, (z_{p}(n))_{n\in\N_{+}}$ more closely. In particular we prove $p$-automaticity of our sequences.



Let
\begin{align*}
	Y_{p}(x)= & \sum_{n=0}^{\infty}y_{p}(n+1)x^{n}, \\
	Z_{p}(x)= & \sum_{n=0}^{\infty}z_{p}(n+1)x^{n},
\end{align*}
i.e., $Y_{p}$ and $Z_{p}$ are the generating functions of the sequences $(y_{p}(n))_{n\in\N_{+}}$ and $(z_{p}(n))_{n\in\N_{+}}$, respectively.

Now we are ready to state the main result of this section.

\begin{thm}\label{automaticthm1}
	Both sequences, $(y_{p}(n))_{n\in\N_{+}}$ and $(z_{p}(n))_{n\in\N_{+}}$, are $p$-automatic.
\end{thm}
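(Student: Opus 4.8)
The plan is to read off explicit closed forms for both sequences from Theorem \ref{parythm1} and then invoke Christol's theorem, exactly as in the $p=2$ section. Write $n=n_{s}p^{s}+\ldots+n_{t}p^{t}$ with $n_{s}\neq 0$, so that $n_{s}=n/p^{\nu_{p}(n)}\Mod{p}$ is the least significant non-zero base-$p$ digit of $n$. By Corollary \ref{corsequences} the two sequences are well defined, and Theorem \ref{parythm1} gives directly
$$
y_{p}(n)=\frac{d_{m}(n)}{pm}\Mod{p}=\left(n_{s}^{-1}\Mod{p}\right),\qquad n\geq 1,
$$
so that $y_{p}$ depends only on $n_{s}$. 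This yields the two recurrences $y_{p}(pn)=y_{p}(n)$ for $n\geq 1$ (deleting trailing zeros does not change $n_{s}$) and $y_{p}(pn+r)\equiv r^{-1}\Mod{p}$ for $1\leq r\leq p-1$.

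First I would treat $(y_{p}(n))_{n\in\N_{+}}$. Setting $\hat{Y}_{p}(x)=\sum_{n\geq 1}y_{p}(n)x^{n}=xY_{p}(x)$ and splitting the sum according to the residue of the index modulo $p$, the two recurrences give
$$
\hat{Y}_{p}(x)=\hat{Y}_{p}(x^{p})+\frac{P(x)}{1-x^{p}},\qquad P(x)=\sum_{r=1}^{p-1}\left(r^{-1}\Mod{p}\right)x^{r}.
$$
Reducing the coefficients modulo $p$ and regarding $\hat{Y}_{p}\in\F_{p}[[x]]$, the Frobenius identity $\hat{Y}_{p}(x^{p})=\hat{Y}_{p}(x)^{p}$ turns this into the Artin--Schreier type relation
$$
(1-x^{p})\hat{Y}_{p}(x)^{p}-(1-x^{p})\hat{Y}_{p}(x)+P(x)=0
$$
with coefficients in $\F_{p}[x]$. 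Hence $\hat{Y}_{p}$, and therefore $Y_{p}=\hat{Y}_{p}/x$, is algebraic over $\F_{p}(x)$, and Christol's theorem shows that $(y_{p}(n))_{n\in\N_{+}}$ is $p$-automatic.

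For $(z_{p}(n))_{n\in\N_{+}}$ the cleanest route is to reduce to the previous case by exploiting the reciprocity $H_{-1}(x)=H_{1}(x)^{-1}$, which follows from $H_{m_{1}}(x)H_{m_{2}}(x)=H_{m_{1}+m_{2}}(x)$. Theorem \ref{parythm1} with $m=1$ gives the sharper congruence $d_{1}(n)\equiv p\,(n_{s}^{-1}\Mod{p})\Mod{p^{2}}$, i.e.
$$
H_{1}(x)\equiv 1+p\sum_{n\geq 1}\left(n_{s}^{-1}\Mod{p}\right)x^{n}\Mod{p^{2}}.
$$
Inverting modulo $p^{2}$ (using $(1+pA)(1-pA)\equiv 1\Mod{p^{2}}$) yields $H_{-1}(x)\equiv 1-p\sum_{n\geq 1}(n_{s}^{-1}\Mod{p})x^{n}\Mod{p^{2}}$, whence $d_{-1}(n)\equiv -p\,(n_{s}^{-1}\Mod{p})\Mod{p^{2}}$ and consequently $z_{p}(n)\equiv -y_{p}(n)\Mod{p}$ for all $n\geq 1$. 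Since negation sends the finite $p$-kernel of $(y_{p}(n))$ to the finite $p$-kernel of $(z_{p}(n))$ (equivalently, $Z_{p}=-Y_{p}$ is algebraic over $\F_{p}(x)$), the sequence $(z_{p}(n))_{n\in\N_{+}}$ is $p$-automatic as well.

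The routine parts are the index bookkeeping and the Frobenius reduction. The step that needs care, and which I expect to be the main obstacle, is the $z_{p}$ case: unlike $y_{p}$ it does not come with an a priori digit formula, so one must manufacture one. The reciprocity argument does this, but it is essential that Theorem \ref{parythm1} controls $d_{1}(n)$ modulo $p^{2}$ rather than merely modulo $p$; modulo $p$ alone the inversion $H_{-1}=H_{1}^{-1}$ loses precisely the information $d_{-1}(n)/p\Mod{p}$ that defines $z_{p}$. An alternative, more self-contained derivation replaces the reciprocity by a direct mod-$p^{2}$ analysis of the functional equation $(1-x^{p})H_{-1}(x)=(1-x)^{p}H_{-1}(x^{p})$, expanding $(1-x)^{p}\equiv(1-x^{p})-p\sum_{i=1}^{p-1}x^{i}/i\Mod{p^{2}}$; after cancelling a common factor of $p$ this again produces an Artin--Schreier equation over $\F_{p}(x)$, recovering $z_{p}(n)\equiv -y_{p}(n)\Mod{p}$.
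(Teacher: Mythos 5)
Your proof is correct, and it splits into two halves of different character relative to the paper. For $(y_{p}(n))_{n\in\N_{+}}$ you do exactly what the paper does: read off the digit recurrences from Theorem \ref{parythm1}, derive the Mahler-type equation $Y_{p}(x)=x^{p-1}Y_{p}(x^{p})+W_{p}(x)/(1-x^{p})$ (your $\hat{Y}_{p}=xY_{p}$ is only an index shift, and your $P(x)=xW_{p}(x)$), apply Frobenius over $\F_{p}$ to get an Artin--Schreier equation, and invoke Christol. For $(z_{p}(n))_{n\in\N_{+}}$ you genuinely diverge. The paper derives a second, independent functional equation for $Z_{p}$ from the product form $H_{-1}(x)=\frac{(1-x)^{p}}{1-x^{p}}H_{-1}(x^{p})$, computing that $\frac{1}{p}\left((1-x)^{p}-(1-x^{p})\right)\equiv -xW_{p}(x)\Mod{p}$, which yields equation (\ref{equZ}); only afterwards does it deduce $y_{p}(n)+z_{p}(n)\equiv 0\Mod{p}$ (Corollary \ref{parycor1}) by summing (\ref{equY}) and (\ref{equZ}). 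You instead obtain that relation first: from $H_{-1}=H_{1}^{-1}$ and the mod-$p^{2}$ congruence for $d_{1}(n)$ in Theorem \ref{parythm1}, the inversion $(1+pA)^{-1}\equiv 1-pA\Mod{p^{2}}$ gives $d_{-1}(n)\equiv -p\left(n_{s}^{-1}\Mod{p}\right)\Mod{p^{2}}$, hence $Z_{p}=-Y_{p}$ in $\F_{p}[[x]]$ and automaticity of $z_{p}$ is immediate from that of $y_{p}$. Your emphasis on needing control modulo $p^{2}$ rather than $p$ is exactly the right point -- modulo $p$ alone, $H_{1}\equiv H_{-1}\equiv 1$ and all information about $z_{p}$ is lost -- and the paper's own remark that congruences of power series with constant term $\pm 1$ pass to inverses licenses the inversion step. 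As for what each route buys: yours is shorter and delivers Corollary \ref{parycor1} as a by-product rather than a separate corollary; the paper's produces the explicit algebraic equation (\ref{equZ}) for $Z_{p}$, though this is recoverable from $Z_{p}=-Y_{p}$ and (\ref{equY}) because $p$ is odd (so $(-Y_{p})^{p}=-Y_{p}^{p}$). Note finally that the ``alternative, more self-contained derivation'' you sketch at the end is precisely the paper's argument.
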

\begin{proof}
	By Christol's Theorem, it is enough to show, that $Y_{p}$ and $Z_{p}$ are both algebraic over $\mathbb{F}_{p}(x)$. First we consider the series $Y_{p}$. Let us denote
	\begin{align*}
		W_{p}(x):=\sum_{n=1}^{p-1}(n^{-1}\Mod{p})x^{n-1}.
	\end{align*}
	In the proof of Theorem \ref{parythm1} it was shown, that the sequence $(y_{p}(n))_{n=1}^{\infty}$ satisfies the following recurrence relations:
	\begin{align*}
		\left\{\begin{array}{ll}
			y_{p}(n)=n^{-1}\Mod{p}, & \text{ if } 1\leq n\leq p-1, \\
			y_{p}(n)=y_{p}(n\Mod{p}), & \text{ if } p\nmid n, \\
			y_{p}(pn)=y_{p}(n).
		\end{array}\right.
	\end{align*}
	Therefore,
	\begin{align*}
		Y_{p}(x)= & \sum_{n=1}^{\infty}y_{p}(pn)x^{pn-1}+\sum_{n=0}^{\infty}\left(y_{p}(1)+y_{p}(2)x+\ldots y_{p}(p-1)x^{p-2}\right)x^{pn} \\
		= & x^{p-1}\sum_{n=1}^{\infty}y_{p}(n)x^{p(n-1)}+W_{p}(x)\sum_{n=0}^{\infty}x^{pn} \\
		= & x^{p-1}\sum_{n=0}^{\infty}y_{p}(n+1)x^{pn}+\frac{W_{p}(x)}{1-x^{p}} \\
		= & x^{p-1}Y_{p}(x^{p})+\frac{W_{p}(x)}{1-x^{p}}.
	\end{align*}
	Equivalently,
	\begin{align}\label{equY}
		x^{p-1}Y_{p}(x)^{p}-Y_{p}(x)+\frac{W_{p}(x)}{1-x^{p}}=0,
	\end{align}
	so $Y_{p}$ is indeed algebraic over $\mathbb{F}_{p}(x)$.
	
	For the proof of the case of the sequence $Z_{p}$, notice the following relations:
	\begin{align*}
		\Delta_{p}(x)= & \prod_{n=0}^{\infty}\left(1-x^{p^{n}}\right)^{p-1}=(1-x)^{p-1}\Delta_{p}(x^{p}), \\
		H_{-1}(x)= & (1-x)\Delta_{p}(x)=(1-x)^{p}\Delta_{p}(x^{p})=\frac{(1-x)^{p}}{1-x^{p}}H_{-1}(x^{p}), \\
		Z_{p}(x)\equiv & \frac{1}{px}\left(H_{-1}(x)-1\right) \Mod{p}.
	\end{align*}
	The last congruence follows from the fact that $z_{p}(n)\equiv \frac{d_{-1}(n)}{p}\Mod{p}$. Thus (we consider the equalities below as equalities over $\mathbb{F}_{p}$, i.e., modulo $p$)
	\begin{align*}
		Z_{p}(x)= & \frac{1}{px}\left(H_{-1}(x)-1\right)=\frac{1}{px}\left(\frac{(1-x)^{p}}{1-x^{p}}H_{-1}(x^{p})-1\right) \\
		= & \frac{1}{px}\left(\frac{(1-x)^{p}}{1-x^{p}}\left(H_{-1}(x^{p})-1\right)+\frac{(1-x)^{p}}{1-x^{p}}-1\right) \\
		= & x^{p-1}\frac{(1-x)^{p}}{1-x^{p}}\frac{1}{px^{p}}\left(H_{-1}(x^{p})-1\right)+\frac{1}{px}\frac{(1-x)^{p}-(1-x^{p})}{1-x^{p}} \\
		= & x^{p-1}\frac{(1-x)^{p}}{1-x^{p}}Z_{p}(x^{p})+\frac{1}{1-x^{p}}\left(\sum_{n=1}^{p-1}(-1)^{n}\frac{1}{p}\binom{p}{n}x^{n-1}\right) \\
		= & x^{p-1}Z_{p}(x^{p})-\frac{W_{p}(x)}{1-x^{p}},
	\end{align*}
	because for each $1\leq n\leq p-1$:
	\begin{align*}
		(-1)^{n}\frac{1}{p}\binom{p}{n}=(-1)^{n}\frac{(p-1)\ldots (p-n+1)}{n!}\equiv (-1)^{n}\frac{(-1)^{n-1}(n-1)!}{n!}=-\frac{1}{n}\Mod{p}.
	\end{align*}
	We can write the equation for $Z_{p}$ in an equivalent form as
	\begin{align}\label{equZ}
		x^{p-1}Z_{p}(x)^{p}-Z_{p}(x)-\frac{W_{p}(x)}{1-x^{p}}=0,
	\end{align}
	so $Z_{p}$ is also algebraic over $\mathbb{F}_{p}(x)$.
\end{proof}

The explicit form of the equations satisfied by $Y_{p}$ and $Z_{p}$ allow us to show the strong relation between sequences $(y_{p}(n))_{n\in\N_{+}}$ and $(z_{p}(n))_{n\in\N_{+}}$.

\begin{cor}\label{parycor1}
	If $p\in\mathbb{P}_{\geq 3}$ and $n\in\N_{\geq 1}$, then:
	\begin{align*}
		y_{p}(n)+z_{p}(n)\equiv 0\Mod{p}.
	\end{align*}
\end{cor}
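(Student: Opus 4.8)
The plan is to recast the claimed congruence as an identity of power series over $\mathbb{F}_{p}$ and then combine the two algebraic equations \eqref{equY} and \eqref{equZ} produced in the proof of Theorem \ref{automaticthm1}. Since $Y_{p}(x)=\sum_{n=0}^{\infty}y_{p}(n+1)x^{n}$ and $Z_{p}(x)=\sum_{n=0}^{\infty}z_{p}(n+1)x^{n}$, the coefficient of $x^{n}$ in $Y_{p}(x)+Z_{p}(x)$ is $y_{p}(n+1)+z_{p}(n+1)$. Hence the assertion $y_{p}(n)+z_{p}(n)\equiv 0\Mod{p}$ for all $n\geq 1$ is exactly the statement that $Y_{p}(x)+Z_{p}(x)=0$ in $\mathbb{F}_{p}[[x]]$, and it suffices to prove this single identity.

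To do so, I would add \eqref{equY} and \eqref{equZ}. The two occurrences of $\frac{W_{p}(x)}{1-x^{p}}$ enter with opposite signs and cancel, leaving
\begin{align*}
x^{p-1}\left(Y_{p}(x)^{p}+Z_{p}(x)^{p}\right)-\left(Y_{p}(x)+Z_{p}(x)\right)=0.
\end{align*}
Working over $\mathbb{F}_{p}$, the Frobenius endomorphism gives $Y_{p}(x)^{p}+Z_{p}(x)^{p}=\left(Y_{p}(x)+Z_{p}(x)\right)^{p}$. Setting $S(x):=Y_{p}(x)+Z_{p}(x)$ and using the identity $S(x)^{p}=S(x^{p})$ valid over $\mathbb{F}_{p}$, the relation collapses to the functional equation
\begin{align*}
S(x)=x^{p-1}S(x^{p}).
\end{align*}

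It then remains to check that the only solution of this equation in $\mathbb{F}_{p}[[x]]$ is $S=0$, which is a short valuation argument. If $S\neq 0$, let $k\geq 0$ be the least degree of a nonzero term of $S$, with coefficient $s_{k}\neq 0$. The right-hand side $x^{p-1}S(x^{p})$ has its lowest-degree term in degree $kp+(p-1)$, whereas the left-hand side has a nonzero term in degree $k$. Since $kp+(p-1)>k$ for every $k\geq 0$ (indeed $kp+(p-1)-k=(k+1)(p-1)>0$), comparison of the coefficients of $x^{k}$ forces $s_{k}=0$, a contradiction. Therefore $S=0$, which is the desired congruence. I do not expect a genuine obstacle in this argument; the only point requiring attention is that everything is carried out modulo $p$, so that the Frobenius identity applies and the coefficients $\binom{p}{n}$ entering \eqref{equZ} play no further role, and this is already ensured by the definitions of $y_{p}$ and $z_{p}$.
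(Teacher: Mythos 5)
Your proof is correct and takes essentially the same approach as the paper: both sum (\ref{equY}) and (\ref{equZ}) so that the $\frac{W_{p}(x)}{1-x^{p}}$ terms cancel, and both invoke the Frobenius identity $Y_{p}(x)^{p}+Z_{p}(x)^{p}=\left(Y_{p}(x)+Z_{p}(x)\right)^{p}$ over $\F_{p}$. The only cosmetic difference lies in the last step: the paper factors the resulting relation as $\left(Y_{p}(x)+Z_{p}(x)\right)\left(x^{p-1}\left(Y_{p}(x)+Z_{p}(x)\right)^{p-1}-1\right)=0$ and notes that the second factor has nonzero constant term, whereas you rewrite $S(x)^{p}=S(x^{p})$ and exclude $S\neq 0$ by comparing orders in the Mahler-type equation $S(x)=x^{p-1}S(x^{p})$; both conclusions are sound.
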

\begin{proof}
	Summing equations (\ref{equY}) and (\ref{equZ}) we get
	\begin{align*}
		\left(Y_{p}(x)+Z_{p}(x)\right)\left(x^{p-1}\left(Y_{p}(x)+Z_{p}(x)\right)^{p-1}-1\right)\equiv 0\Mod{p}.
	\end{align*}
Because the $0$th term in the second factor is not divisible by $p$ we need to have
	\begin{align*}
		Y_{p}(x)+Z_{p}(x)\equiv 0\Mod{p},
	\end{align*}
	and this is equivalent to the statement.
\end{proof}

In the case of sequences $(y_{p}(n))_{n\in\N_{+}}$ and $(z_{p}(n))_{n\in\N_{+}}$ we can write the automatons down in both conventions (recall the discussion on the end of Section \ref{sec2}). Now we show them for the first of these sequences. In the case of the second one, it is enough to switch $n^{-1}$ into $-n^{-1}$ for all $n=1, \ldots, p-1$.
The automaton generating $(y_{p}(n))_{n\in\N_{+}}$ when we read digits from the left to the right is presented below.

\bigskip

\begin{center}
\begin{tikzpicture}[->, shorten >= 1pt, node distance=2.5 cm, on grid, auto]
\node[state, initial] (c_0) {$-$};
\node[state, inner sep=1pt] (c_1) [above right=2.5cm and 5cm of c_0] {$1^{-1}$};
\node[state, inner sep=1pt] (c_2) [above right=1cm and 5cm of c_0] {$2^{-1}$};
\node[inner sep=1pt] (c_3) [below right=0cm and 5cm of c_0] {$\vdots$};
\node[state, inner sep=1pt] (c_4) [below right=1.5cm and 5cm of c_0] {$(p-1)^{-1}$};
\path[->]
(c_0) edge [left, above] node {$1$} (c_1)
edge [left, above] node {$2$} (c_2)
edge [left, below] node {$p-1$} (c_4)
edge [loop above] node {$0$} (c_0)
(c_1) edge [loop right] node {$0,\ldots, p-1$} (c_1)
(c_2) edge [loop right] node {$0,\ldots, p-1$} (c_2)
(c_4) edge [loop right] node {$0,\ldots, p-1$} (c_4);	
\end{tikzpicture}
\end{center}
\begin{center}
Figure 3. Automaton generating $(y_{p}(n))_{n\in\N}$ when reading digits of $n$ from the left to the right.
\end{center}
\bigskip

On the other side, if we read the digits in the opposite direction we present the corresponding automaton generating $(y_{p}(n))_{n\in\N}$ below.

\bigskip

\begin{center}
\begin{tikzpicture}[->, shorten >= 1pt, node distance=2.5 cm, on grid, auto]
\node[state, initial] (c_0) {$-$};
\node[state, inner sep=1pt] (c_1) [above right=3cm and 5cm of c_0] {$1^{-1}$};
\node[state, inner sep=1pt] (c_2) [above right=1cm and 5cm of c_0] {$2^{-1}$};
\node[inner sep=1pt] (c_3) [below right=0.7cm and 5cm of c_0] {$\vdots$};
\node[state, inner sep=1pt] (c_4) [below right=3cm and 5.2cm of c_0] {$(p-1)^{-1}$};
\path[->]
(c_0) edge [left, above] node {$1$} (c_1)
edge [left, above] node {$2$} (c_2)
edge [left, below] node {$p-1$\ \ \ \ \ } (c_4)
(c_1) edge [loop above] node {$0, 1$} (c_1)
edge [bend left=20] node {$2$} (c_2)
edge [bend left=85] node {$p-1$} (c_4)
(c_2) edge [loop right] node {$0, 2$} (c_2)
edge [bend left=20] node {$1$} (c_1)
edge [bend left=40] node {$p-1$} (c_4)
(c_4) edge [loop below] node {$0, p-1$} (c_4)
edge [bend right=25] node {$2$} (c_2)
edge [bend right=65] node {$1$} (c_1);	
\end{tikzpicture}
\end{center}
\begin{center}
Figure 4. Automaton generating $(y_{p}(n))_{n\in\N}$ when reading digits of $n$ from the right to the left.
\end{center}

\bigskip

More precisely, in the second automaton each arrow with number $0$ forms a loop, and each arrow with a number $n\in\{1,\ldots ,p-1\}$ goes to the state with $n^{-1}$.

\bigskip

Let us note that in the case $p\geq 3$ the sequence $(\nu_{p}(d_{m}(n)))_{n\in\N}$ is $p$-automatic, which is an immediate consequence of the fact that it is ultimately constant.

\bigskip

Corollary \ref{parycor1} can be used to find a relation between numbers $d_{m}(n)$ and $d_{-m}(n)$, and generalize some results obtained previously only for positive $m$. We have the following

\begin{thm}
	For each $p\in\mathbb{P}_{\geq 3}$, $m\in\N_{\geq 1}$ and $n\in\N$ we have
	\begin{align*}
		d_{-m}(n)\equiv -d_{m}(n)\Mod{p^{\nu_{p}(m)+2}}.
	\end{align*}
	In particular,
	\begin{align*}
		d_{-m}(pn)\equiv d_{-m}(n)\Mod{p^{\nu_{p}(m)+2}}.
	\end{align*}
\end{thm}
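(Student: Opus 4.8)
The plan is to reduce everything to Corollary \ref{parycor1}, which already encodes the essential cancellation, and to do some careful $p$-adic bookkeeping. Fix $m\in\N_{\geq 1}$ and set $\alpha=\nu_{p}(m)$; I restrict attention to $n\geq 1$, which is the only range where the asserted congruence has content (at $n=0$ both coefficients equal $1$, and $1\not\equiv-1\Mod{p^{\alpha+2}}$ for $p\geq 3$). The preliminary observation is that, modulo $p^{\alpha+2}$, both $d_{m}(n)$ and $d_{-m}(n)$ are equal to $pm$ times the residues $y_{p}(n)$ and $z_{p}(n)$ introduced in Section \ref{sec4}. Indeed, by definition $y_{p}(n)\equiv\frac{d_{m}(n)}{pm}\Mod p$, and writing the negative index as $-m$ turns the sign in the definition of $z_{p}$ into a plus, so that $z_{p}(n)\equiv\frac{d_{-m}(n)}{pm}\Mod p$ as well. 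Both quotients are $p$-adic units, because $\nu_{p}(d_{\pm m}(n))=\alpha+1=\nu_{p}(pm)$ for $n\geq 1$ by Theorems \ref{parythm1} and \ref{parythm2}. Taking integer representatives of $y_{p}(n),z_{p}(n)$ and multiplying the congruences $\frac{d_{\pm m}(n)}{pm}\equiv y_{p}(n),\,z_{p}(n)\Mod p$ through by $pm$, I obtain
$$
d_{m}(n)\equiv pm\,y_{p}(n),\qquad d_{-m}(n)\equiv pm\,z_{p}(n)\Mod{p^{\alpha+2}},
$$
where the extra factor of $p$ produced by the multiplication is exactly what lifts the modulus from $p$ to $p^{\alpha+2}$.

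Next I would add the two displayed congruences, which gives
$$
d_{m}(n)+d_{-m}(n)\equiv pm\bigl(y_{p}(n)+z_{p}(n)\bigr)\Mod{p^{\alpha+2}}.
$$
By Corollary \ref{parycor1} we have $y_{p}(n)+z_{p}(n)\equiv 0\Mod p$, so $y_{p}(n)+z_{p}(n)=pt$ for some integer $t$, and hence the right-hand side equals $p^{2}mt$, whose $p$-adic valuation is at least $\alpha+2$. Therefore $d_{m}(n)+d_{-m}(n)\equiv 0\Mod{p^{\alpha+2}}$, i.e. $d_{-m}(n)\equiv -d_{m}(n)\Mod{p^{\nu_{p}(m)+2}}$, as claimed.

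For the \emph{In particular} part I would combine the main congruence with Lemma \ref{parylem2}. Applied to the positive index $m=p^{\alpha}k$, that lemma yields $d_{m}(pn)\equiv d_{m}(n)\Mod{p^{\alpha+2}}$ for $n\geq 1$, whence
$$
d_{-m}(pn)\equiv -d_{m}(pn)\equiv -d_{m}(n)\equiv d_{-m}(n)\Mod{p^{\alpha+2}},
$$
using the main congruence at $pn$ and at $n$.

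The whole argument is routine once Corollary \ref{parycor1} is available; the one place requiring genuine care — and the main thing I would double-check — is the valuation accounting, namely verifying that each multiplication by $pm$ and each use of a mod-$p$ congruence contributes exactly one further power of $p$, so that the final modulus is precisely $p^{\nu_{p}(m)+2}$ and not something weaker. I expect no serious obstacle beyond this bookkeeping.
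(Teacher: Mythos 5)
Your proof is correct and follows essentially the same route as the paper's: both arguments express $d_{m}(n)$ and $d_{-m}(n)$ as $pm\,y_{p}(n)$ and $pm\,z_{p}(n)$ modulo $p^{\nu_{p}(m)+2}$ (using the valuation/congruence statements of Theorems \ref{parythm1} and \ref{parythm2}), invoke Corollary \ref{parycor1} for the sign flip, and obtain the second congruence by combining the first with Lemma \ref{parylem2} applied to the positive index $m$. Your exclusion of $n=0$ is also a correct catch: the paper states the result for $n\in\N$, but at $n=0$ the claim would read $1\equiv -1\Mod{p^{\nu_{p}(m)+2}}$, so the first congruence indeed holds only for $n\geq 1$.
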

\begin{proof}
	The first part follows from Theorems \ref{parythm1} and \ref{parythm2}, and Corollary \ref{parycor1}. Indeed,
	\begin{align*}
		d_{-m}(n)\equiv pmz_{p}(n)\equiv -pmy_{p}(n)\equiv -d_{m}(n)\Mod{p^{\nu_{p}(m)+2}}.
	\end{align*}
	The second part is now an immediate consequence of Lemma \ref{parylem2}.
\end{proof}

We group the fact above, Theorems \ref{parythm1} and \ref{parythm2}, and Corollary \ref{corsequences} and simply get the following general result.

\begin{cor}\label{parycor2}
	Let $p\in\mathbb{P}_{\geq 3}$ and $m\in\Z\setminus\{0\}$. Let $n\in\N_{\geq 1}$ and $n=n_{s}p^{s}+\ldots +n_{t}p^{t}$ be its base--$p$ representation, where $s\leq t$ and $n_{s}\neq 0$. Then
	\begin{align*}
		d_{m}(n)\equiv pm\left(n_{s}^{-1}\Mod p\right)\Mod{p^{\nu_{p}(m)+2}}.
	\end{align*}
	In particular,
	\begin{align*}
	\nu_{p}(d_{m}(n))=\left\{\begin{array}{ll}
	0, & \text{ if } n=0, \\
	\nu_{p}(m)+1, & \text{ if } n\geq 1,
	\end{array}
	\right.
	\end{align*}
	and the sequence $\left(\frac{d_{m}(n)}{pm}\Mod{p}\right)_{n=1}^{\infty}$ does not depend on $m$.
\end{cor}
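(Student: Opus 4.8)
The plan is to deduce this corollary purely by assembling the results already proved, splitting according to the sign of $m$. For $m \geq 1$ there is nothing new to establish: the congruence $d_m(n) \equiv pm\left(n_s^{-1}\Mod p\right)\Mod{p^{\nu_p(m)+2}}$ is exactly the statement of Theorem \ref{parythm1}, so I would simply invoke it. The whole point of the corollary is to extend this single clean formula to all $m \in \Z \setminus \{0\}$, so the work is concentrated in the negative case.

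For $m \leq -1$ I would write $m = -M$ with $M = |m| \geq 1$ and first record that $\nu_p(m) = \nu_p(M)$, so that the target modulus $p^{\nu_p(m)+2}$ coincides with $p^{\nu_p(M)+2}$ and the source congruences below are all stated to the same precision. I then chain the theorem immediately preceding this corollary, which gives $d_{-M}(n) \equiv -d_M(n)\Mod{p^{\nu_p(M)+2}}$, with Theorem \ref{parythm1} applied to the positive exponent $M$, namely $d_M(n) \equiv pM\left(n_s^{-1}\Mod p\right)\Mod{p^{\nu_p(M)+2}}$. Together these yield $d_m(n) \equiv -pM\left(n_s^{-1}\Mod p\right)\Mod{p^{\nu_p(m)+2}}$, and since $-pM = pm$ this is precisely the asserted formula. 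An equivalent route, matching the ingredients the author lists, runs through $y_p$ and $z_p$: Corollary \ref{parycor1} gives $z_p(n) \equiv -y_p(n)$, and as $d_m(n)/(pm) \equiv -z_p(n)\equiv y_p(n)\Mod p$ for negative $m$ by the definition of $z_p$, the same leading-digit description re-emerges; alternatively Theorem \ref{parythm2} combined with the $m=1$ instances of the two displayed congruences produces the formula directly.

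With the unified congruence in hand the two remaining assertions are immediate. For the valuation I would note that $n_s^{-1}\Mod p$ lies in $\{1,\ldots,p-1\}$ and is hence a $p$-adic unit, so that $\nu_p\bigl(pm\left(n_s^{-1}\Mod p\right)\bigr) = \nu_p(m)+1$, which is strictly smaller than the modulus exponent $\nu_p(m)+2$; therefore the valuation of $d_m(n)$ equals that of the right-hand side, giving $\nu_p(d_m(n)) = \nu_p(m)+1$ for $n \geq 1$, while $d_m(0)=1$ settles $n=0$. For the independence claim I divide the congruence by $pm$ to get $d_m(n)/(pm) \equiv n_s^{-1}\Mod p$, whose right-hand side depends only on the leading base-$p$ digit of $n$ and not on $m$.

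I do not expect a genuine obstacle here, since the corollary is a bookkeeping consolidation of Theorems \ref{parythm1} and \ref{parythm2}, the preceding theorem, and Corollary \ref{corsequences}. The only point demanding care is sign and modulus consistency in the negative case: one must verify that $-pM = pm$ and that $\nu_p(m) = \nu_p(-m)$, so that the two ingredient congruences are legitimately stated to the same modulus before being chained. It is worth emphasizing that the resulting formula is slightly stronger than Corollary \ref{corsequences}, since the leading-digit expression shows the normalized sequence $\left(\frac{d_m(n)}{pm}\Mod p\right)_{n=1}^{\infty}$ to be independent of $m$ outright, and not merely of the sign of $m$.
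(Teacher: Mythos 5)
Your proposal is correct and matches the paper's approach exactly: the paper offers no detailed argument, stating only that the corollary follows by grouping the preceding theorem ($d_{-m}(n)\equiv -d_{m}(n)\Mod{p^{\nu_{p}(m)+2}}$) with Theorems \ref{parythm1}, \ref{parythm2} and Corollary \ref{corsequences}, which is precisely the assembly you carry out. Your added care about sign and modulus consistency in the negative case, and your remark that the unified congruence strengthens Corollary \ref{corsequences} from sign-dependence to full independence of $m$, are both accurate fillings-in of what the paper leaves implicit.
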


We can apply Corollary \ref{parycor2} to the problem that was considered in \cite{Ula}. Let us define for $p\in\mathbb{P}_{\geq 3}$, $m\in\N_{\geq 1}$ a sequence $(A_{p,m}(n))_{n\in\N}$ as
\begin{align*}
	\prod_{n=0}^{\infty}\frac{1}{(1-x^{p^{n}})^{m}}=\sum_{n=0}^{\infty}A_{p,m}(n)x^{n}.
\end{align*}
The number $A_{p,m}(n)$ counts the number of $p$-ary representations of $n$ such that each part takes one among $m$ colors. The sequence $(A_{p,m}(n))_{n\in\N}$ was thorough studied in \cite{Ula}. One of main results of the cited paper states that for each $\alpha\in\N_{\geq 1}$ and $n\geq p^{\alpha}$ the following equality holds:
\begin{align*}
	\nu_{p}\left(A_{p,(p-1)(p^{\alpha}-1)}(n)\right)=1.
\end{align*}

Using our previous findings we are able to improve the above equality to the following

\begin{thm}
	Let $p\in\mathbb{P}_{\geq 3}$, $\alpha\in\N_{\geq 1}$ and $n\geq p^{\alpha}$. Let $n=\sum_{j=s}^{t}n_{j}p^{j}$, where $s\leq t$ and $n_{s}\neq 0$, be the base-$p$ representation of $n$. Then
	\begin{align*}
		A_{p,(p-1)(p^{\alpha}-1)}(n)\equiv -p(n_{s}^{-1}\Mod{p})\Mod{p^{2}}.
	\end{align*}
\end{thm}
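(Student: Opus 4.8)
The plan is to express $A_{p,(p-1)(p^{\alpha}-1)}(n)$ through the already-understood numbers $d_{p^{\alpha}-1}(\cdot)$ and then to evaluate a short sum of inverse lowest digits modulo $p$. The starting point is the generating-function dictionary between the two families. Comparing the definition of $A_{p,m}$ with $H_{m}=G_{p}^{m}$ gives, for every $m$,
$$
\sum_{n=0}^{\infty}A_{p,(p-1)m}(n)x^{n}=\prod_{j=0}^{\infty}\frac{1}{(1-x^{p^{j}})^{(p-1)m}}=(1-x)^{m}H_{m}(x).
$$
Taking $m=p^{\alpha}-1$, so that $(p-1)m=(p-1)(p^{\alpha}-1)$, and reading off the coefficient of $x^{n}$ yields
$$
A_{p,(p-1)(p^{\alpha}-1)}(n)=\sum_{i=0}^{\min\{p^{\alpha}-1,\,n\}}(-1)^{i}\binom{p^{\alpha}-1}{i}d_{p^{\alpha}-1}(n-i).
$$
Since $n\geq p^{\alpha}>p^{\alpha}-1$, the summation range is $0\leq i\leq p^{\alpha}-1$ and each argument $n-i$ lies in $\{n-p^{\alpha}+1,\dots,n\}\subseteq\N$.

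I would then collapse this convolution modulo $p^{2}$. Two inputs suffice: the elementary congruence $\binom{p^{\alpha}-1}{i}\equiv(-1)^{i}\pmod{p}$, which gives $(-1)^{i}\binom{p^{\alpha}-1}{i}\equiv1\pmod{p}$; and the fact that $\nu_{p}(p^{\alpha}-1)=0$, so by Corollary \ref{parycor2} each term $d_{p^{\alpha}-1}(n-i)$ with $n-i\geq1$ is exactly divisible by $p$. Multiplying a multiple of $p$ by something congruent to $1$ modulo $p$ alters it only by a multiple of $p^{2}$, hence
$$
A_{p,(p-1)(p^{\alpha}-1)}(n)\equiv\sum_{i=0}^{p^{\alpha}-1}d_{p^{\alpha}-1}(n-i)\pmod{p^{2}}.
$$
Writing $j_{\ast}$ for the lowest nonzero base-$p$ digit of $j$ and inserting the explicit value $d_{p^{\alpha}-1}(j)\equiv p(p^{\alpha}-1)(j_{\ast}^{-1}\bmod p)\equiv-p(j_{\ast}^{-1}\bmod p)\pmod{p^{2}}$ supplied by Corollary \ref{parycor2}, this becomes
$$
A_{p,(p-1)(p^{\alpha}-1)}(n)\equiv-p\sum_{j=n-p^{\alpha}+1}^{n}\bigl(j_{\ast}^{-1}\bmod p\bigr)\pmod{p^{2}}.
$$

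It remains to show that the block sum $S:=\sum_{j=n-p^{\alpha}+1}^{n}(j_{\ast}^{-1}\bmod p)$ satisfies $S\equiv n_{s}^{-1}\pmod{p}$, and this is the step I expect to be the main obstacle. My plan is an induction on $\alpha$ that strips one base-$p$ digit at a time. The block consists of $p^{\alpha}$ consecutive integers, so among its non-multiples of $p$ each nonzero residue modulo $p$ occurs exactly $p^{\alpha-1}$ times; since $\sum_{r=1}^{p-1}r^{-1}\equiv\sum_{r=1}^{p-1}r=\tfrac{p(p-1)}{2}\equiv0\pmod{p}$, these terms cancel modulo $p$. For a multiple $j=pj'$ one has $j_{\ast}=(j')_{\ast}$, and as $j$ runs over the multiples of $p$ in the block, $j'$ runs over the $p^{\alpha-1}$ consecutive integers ending at $\lfloor n/p\rfloor$; this produces the recursion $S_{\alpha}(n)\equiv S_{\alpha-1}(\lfloor n/p\rfloor)\pmod{p}$, where $S_{\beta}(N):=\sum_{j=N-p^{\beta}+1}^{N}(j_{\ast}^{-1}\bmod p)$ and $S=S_{\alpha}(n)$, with base case $S_{0}(N)\equiv(N_{\ast}^{-1}\bmod p)$. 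Iterating collapses $S$ to the inverse of the lowest nonzero digit of $\lfloor n/p^{\alpha}\rfloor$. The genuinely delicate point, which I would isolate as a separate lemma, is the identification of this surviving digit with $n_{s}$: one must track precisely which digit of $n$ remains after the $\alpha$-fold stripping, and it is exactly here that the hypothesis $n\geq p^{\alpha}$ together with the position $s=\nu_{p}(n)$ of the lowest nonzero digit of $n$ must be used to pin the two expressions together.
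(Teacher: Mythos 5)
Your reduction is correct and complete up to the very last step, and it is a genuinely different (and self-contained) route from the paper's, which instead imports a congruence from \cite{Ula} relating $A_{p,(p-1)(p^{\alpha}-1)}(n)$ to $d_{-1}(n)=D_{p}(n)-D_{p}(n-1)$ and then invokes Corollary \ref{parycor2}. In your chain, the identity $\sum_{n\geq 0}A_{p,(p-1)m}(n)x^{n}=(1-x)^{m}H_{m}(x)$, the convolution with $m=p^{\alpha}-1$, the collapse modulo $p^{2}$ (using $\binom{p^{\alpha}-1}{i}\equiv(-1)^{i}\pmod{p}$ for odd $p$ together with $\nu_{p}(d_{p^{\alpha}-1}(n-i))=1$, valid because $n\geq p^{\alpha}$ forces $n-i\geq 1$), the substitution $d_{p^{\alpha}-1}(j)\equiv -p(j_{\ast}^{-1}\bmod p)\pmod{p^{2}}$, and the digit-stripping recursion $S_{\alpha}(n)\equiv S_{\alpha-1}(\lfloor n/p\rfloor)\pmod{p}$ are all sound. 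Iterating, your argument establishes
\begin{equation*}
A_{p,(p-1)(p^{\alpha}-1)}(n)\equiv -p\left(\left(\lfloor n/p^{\alpha}\rfloor\right)_{\ast}^{-1}\bmod p\right)\pmod{p^{2}},
\end{equation*}
where $(\cdot)_{\ast}$ denotes the lowest nonzero base-$p$ digit.

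The step you flagged as the main obstacle is a genuine gap, and it cannot be closed: $\left(\lfloor n/p^{\alpha}\rfloor\right)_{\ast}=n_{s'}$ with $s'=\min\{j\geq\alpha:\;n_{j}\neq 0\}$, and this equals $n_{s}$ only when $s=\nu_{p}(n)\geq\alpha$ (or when the two digits happen to coincide); the hypothesis $n\geq p^{\alpha}$ guarantees only that $s'$ exists, not that $s'=s$. Indeed your (valid) congruences refute the statement as printed: for $p=3$, $\alpha=1$, $n=5=(12)_{3}$ one has $A_{3,4}(5)=\binom{8}{3}+4\binom{5}{3}=56+40=96\equiv 6\pmod{9}$, which matches your formula ($\lfloor 5/3\rfloor=1$, surviving digit $1$, giving $-3\equiv 6$), whereas the theorem predicts $-3\cdot(2^{-1}\bmod 3)=-6\equiv 3\pmod{9}$; the case $n=7=(21)_3$, where $A_{3,4}(7)=300\equiv 3\pmod 9$ against the predicted $6$, behaves the same way. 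The same numerics contradict the relation the paper quotes from \cite{Ula} (its left side is $6$ and its right side $d_{-1}(5)\cdot(-1)^{2}\binom{2}{2}=-6\equiv 3$ modulo $9$ at $n=5$), and the fact that the quoted product runs exactly over the discarded digits $n_{0},\dots,n_{\alpha-1}$ strongly suggests its first argument should be $\lfloor n/p^{\alpha}\rfloor$ rather than $n$. So the defect lies in the printed statement and its source rather than in your steps 1--5: as a proof of the theorem as stated, your proposal is incomplete at precisely the step you isolated, and that step is false in general; carried through honestly, your method proves the corrected congruence with $n_{s'}$ in place of $n_{s}$ (equivalently, it proves the theorem under the additional hypothesis $\nu_{p}(n)\geq\alpha$).
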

\begin{proof}
	The relation on the very end of the proof of Theorem 3.1 from \cite{Ula} implies that
	\begin{align*}
		A_{p,(p-1)(p^{\alpha}-1)}(n)\equiv \left(D_{p}(n)-D_{p}(n-1)\right)\prod_{j=0}^{\alpha -1}(-1)^{n_{j}}\binom{p-1}{n_{j}}\Mod{p^{2}}
	\end{align*}
	(we assume that $n_{j}=0$ for $j<s$). Corollary \ref{parycor2} gives
	\begin{align*}
		A_{p,(p-1)(p^{\alpha}-1)}(n)\equiv & d_{-1}(n)\prod_{j=0}^{\alpha -1}(-1)^{n_{j}}\binom{p-1}{n_{j}}\equiv -p(n_{s}^{-1}\Mod{p})\prod_{j=0}^{\alpha -1}(-1)^{n_{j}}\binom{p-1}{n_{j}}\Mod{p^{2}}.
	\end{align*}
	 For the end of the proof it is enough to observe that for each $j\in\{0,\ldots ,\alpha -1\}$:
	 \begin{align*}
	 	(-1)^{n_{j}}\binom{p-1}{n_{j}}= & (-1)^{n_{j}}\frac{(p-1)\ldots (p-n_{j})}{n_{j}!}\equiv (-1)^{n_{j}}\frac{(-1)^{n_{j}}n_{j}!}{n_{j}!}=1 \Mod{p}.
	 \end{align*}
\end{proof}

We finish this section with the following

\begin{thm}\label{transres}
Let us consider the sequences $(y_{p}(n))_{n\in\N_{+}}$, $(z_{p}(n))_{n\in\N_{+}}$ as the sequences with terms in the set $\{1,\ldots,p-1\}$ and the corresponding ordinary generating functions $Y_{p}(x)$, $Z_{p}(x)$ as the power series in $\C[[x]]$. Then both $Y_{p}$, $Z_{p}$ are analytic in the circle $|x|<1$ and are transcendental over $\C(x)$.
\end{thm}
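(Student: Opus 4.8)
The plan is to prove analyticity first, then transcendence, treating $Y_p$ and $Z_p$ in parallel since they satisfy the same algebraic structure over $\F_p$ and differ only in a sign. For analyticity, note that the coefficients $y_p(n)$ and $z_p(n)$ are taken in the set $\{1,\ldots,p-1\}$, hence bounded. Any power series with bounded coefficients has radius of convergence at least $1$, so $Y_p$ and $Z_p$ define functions analytic in the open unit disc $|x|<1$; this is the easy half.

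**Transcendence via a dichotomy.**
For transcendence I would argue by contradiction: suppose $Y_p \in \C[[x]]$ is algebraic over $\C(x)$. The key tool is a theorem (due to Fatou, and in the form usable here to Carlson--Szeg\H{o} or the classical Fatou--P\'olya circle of ideas) stating that a power series with integer coefficients that is analytic in $|x|<1$ is either rational or has the unit circle as a natural boundary. More directly, I would invoke the following rigidity result: a power series $\sum a_n x^n$ with $a_n$ taking only finitely many integer values, which is algebraic over $\C(x)$, must in fact be rational. This is a consequence of the theorem of Fatou together with the Eisenstein-type integrality: since the $a_n$ lie in a finite set, they are bounded, and an algebraic function analytic in the disc with bounded integer coefficients is necessarily a rational function of $x$. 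So it suffices to rule out rationality of $Y_p$ and $Z_p$.

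**Ruling out rationality.**
To exclude rationality I would use the established $p$-automaticity. If $Y_p$ were rational over $\C(x)$, then since its coefficients take values in $\{1,\ldots,p-1\}\subset\Z$ and it is analytic in the unit disc, its coefficient sequence $(y_p(n))_{n\in\N_+}$ would be eventually periodic (a rational function analytic at $0$ with bounded integer coefficients has an eventually periodic coefficient sequence). Thus it remains to show that $(y_p(n))_{n\in\N_+}$ is \emph{not} eventually periodic. This follows from Theorem \ref{automaticthm1} together with the structure of the defining recurrence: from the relations $y_p(n)=n^{-1}\Mod{p}$ for $1\le n\le p-1$, $y_p(pn)=y_p(n)$, and $y_p(n)=y_p(n\Mod p)$, one sees that $y_p(n)$ is determined by the least significant nonzero base-$p$ digit of $n$; concretely $y_p(n)=(n_s^{-1}\Mod p)$ where $n_s$ is the lowest nonzero digit. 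A sequence of this form is $p$-automatic but genuinely aperiodic, since along the subsequence $n=p^k$ one gets the constant value $1^{-1}=1$, while inserting a fixed low digit $d\ne 0$ produces the value $d^{-1}$ at arbitrarily large positions; a direct check shows no eventual period can accommodate the shifting positions of the ``first nonzero digit.'' An alternative and cleaner route is Christol's theorem in the other direction: an eventually periodic sequence corresponds to a rational function over $\F_p$, and one verifies directly from equation (\ref{equY}), namely $x^{p-1}Y_p(x)^p - Y_p(x) + \tfrac{W_p(x)}{1-x^p}=0$, that $Y_p$ cannot be rational over $\F_p$ (a rational solution would force a contradiction on degrees after clearing denominators, because the map $f\mapsto f^p$ raises the degree of the numerator by a factor of $p$ and no rational $f$ can balance the equation). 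The identical argument applies to $Z_p$ using (\ref{equZ}).

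**Main obstacle.**
The principal difficulty is not the analyticity, which is immediate, but correctly invoking the transcendence criterion: one must be careful that algebraicity over $\C(x)$ for a series with integer (indeed finite-valued) coefficients forces rationality. I would rely on Fatou's theorem in the sharp form: if $\sum a_n x^n$ has integer coefficients, radius of convergence $\ge 1$, and is algebraic over $\C(x)$, then it is rational. Granting this, the whole problem reduces to the combinatorial statement that the last-nonzero-digit sequence is aperiodic, which is elementary. The most delicate point to state precisely is thus the passage ``algebraic over $\C(x)$ with finite-valued coefficients $\Rightarrow$ rational,'' and I would cite the classical reference for Fatou's theorem (e.g. the treatment in \cite{AllSh}) to justify it cleanly.
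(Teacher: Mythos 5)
Your proposal is correct, and its core route is genuinely different from the paper's. The paper invokes Nishioka's theorem on Mahler functions (it mentions Fatou's dichotomy only in a closing remark, exactly as you use it) and then carries the real burden --- irrationality of $Y_{p}$ --- by substituting $Y_{p}=P/Q$ into the functional equation $Y_{p}(x)=x^{p-1}Y_{p}(x^{p})+W_{p}(x)/(1-x^{p})$ and running a page of degree bounds and casework ($\deg Q\leq 2$, then $Q(1)=0$, then $Q=(1-x)S$ with $\deg S\in\{0,1\}$). You instead get irrationality combinatorially: since the coefficients lie in the finite integer set $\{1,\ldots,p-1\}$, rationality would force the coefficient sequence to be eventually periodic (pigeonhole on the linear recurrence satisfied by the coefficients of a rational function), and eventual periodicity fails because $y_{p}(n)=n_{s}^{-1}\Mod{p}$ depends only on the lowest nonzero base-$p$ digit $n_{s}$ of $n$ (Theorem \ref{parythm1}). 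This buys a much shorter argument that avoids all polynomial casework, at the cost of using the explicit digit formula rather than only the Mahler equation (the paper's method would survive in settings where no such formula is known). Two points in your write-up need tightening. First, the aperiodicity ``direct check'' should actually be performed; it is two lines: if $T$ were an eventual period and $T_{a}$ its lowest nonzero digit, in position $a$, then for $k>a$ large and any $m\in\{1,\ldots,p-1\}$ the lowest nonzero digit of $mp^{k}+T$ is still $T_{a}$, so periodicity would give $m^{-1}\equiv y_{p}(mp^{k})=y_{p}(mp^{k}+T)\equiv T_{a}^{-1}\Mod{p}$ for \emph{every} such $m$, which is impossible for $p\geq 3$; and $Z_{p}$ follows from $z_{p}(n)=p-y_{p}(n)$ (equivalently $Z_{p}=\frac{p}{1-x}-Y_{p}$, as in the paper). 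Second, your alternative route --- ruling out rational solutions of (\ref{equY}) over $\F_{p}$ by a bare degree count, ``since $f\mapsto f^{p}$ raises degrees'' --- is not a proof as stated: cancellations can and do occur (note $1-x^{p}=(1-x)^{p}$ over $\F_{p}$), and making it rigorous requires essentially the same casework the paper does over $\C$; so that variant should be dropped or fleshed out, while your main Fatou-plus-aperiodicity route stands on its own.
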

\begin{proof}

Because $y_{p}(n), z_{p}(n)\in\{1,\ldots,p-1\}$ (treated as elements in $\Z$), then it is clear that the series $Y_{p}(x)$ and $Z_{p}(x)$ are absolutely convergent in the set $\{x\in\C:\;|x|<1\}$ and thus define analytic functions. From Corollary \ref{parycor1} and the fact that $0<y_{p}(n),z_{p}(n)<p$, we get that $z_{p}(n)=p-y_{p}(n)$ for all $n\in\N_{+}$. Thus simply
\begin{align*}
	Z_{p}(x)=\frac{p}{1-x}-Y_{p}(x).
\end{align*}
Therefore, in order to get the transcendence of the functions $Y_{p}(x)$ and $Z_{p}(x)$, it is enough to prove it only for $Y_{p}(x)$.

In the proof of Theorem \ref{automaticthm1} we obtained the functional equation for the function $Y_{p}(x)$:
\begin{equation*}
Y_{p}(x)=x^{p-1}Y_{p}(x^{p})+\frac{W_{p}(x)}{1-x^{p}},
\end{equation*}
where $W_{p}(x)=\sum_{n=1}^{p-1}(n^{-1}\mod{p})x^{n-1}$. This equation was proved in fact over $\C$, not only over $\F_{p}$ (because all equalities in this case were 'true' equalities, i.e., not equalities modulo $p$).

The transcendence of $Y_{p}$ will be consequence of a general result of Nishioka \cite{Nis}. This result says that a power series, say $f$, with rational coefficients, which defines an analytic function in some neighborhood of zero and satisfying functional equation of the form $f(x)=A(x)+B(x)f(x^{m})$ for some $m\in\N_{\geq 2}$ and functions $A, B\in\C(x)$, is either rational of transcendental. In the light of this remark we see, that in order to get transcendence of $Y_{p}$ it is enough to prove that our function is not rational. 

Let us suppose that $Y_{p}(x)$ is rational, i.e., $Y_{p}(x)=P(x)/Q(x)$ for some $P, Q\in\C[x]$ with $\gcd(P(x),Q(x))=1$. Putting the expression for $Y_{p}$ into the functional equation and clearing the denominators we get the equation:
\begin{equation}\label{eqrat1}
(1-x^{p})P(x)Q(x^{p})=(1-x^{p})x^{p-1}P(x^{p})Q(x)+Q(x)Q(x^{p})W_{p}(x).
\end{equation}
From the above equation we get $Q(x^{p})\mid (1-x^{p})x^{p-1}P(x^{p})Q(x)$. However, the co-primality of the polynomials $P(x), Q(x)$ implies co-primality of the polynomials $P(x^{p}), Q(x^{p})$ and thus we get
\begin{align*}
	Q(x^{p})\mid (1-x^{p})x^{p-1}Q(x).
\end{align*}
Consequently $p\op{deg}Q\leq 2p-1+\op{deg}Q$ and thus $\op{deg}Q\leq 2+\frac{1}{p-1}$. We thus see that the degree of the polynomial $Q$ is bounded by 2. 

Let us put $x=1$ in (\ref{eqrat1}). We get that $Q(1)^{2}W_{p}(1)=0$, but $W_{p}(1)>0$, so $Q(1)=0$. In particular, $\deg Q\geq 1$ and we can write $Q(x)=(1-x)S(x)$. Plug it into (\ref{eqrat1}) and get after dividing by $1-x$:
\begin{align}\label{eqrat2}
	\frac{1-x^{p}}{1-x}P(x)S(x^{p})=x^{p-1}P(x^{p})S(x)+S(x)S(x^{p})W_{p}(x).
\end{align}
If $\deg Q=1$, then $S$ is a constant, say $S(x)=S_{0}\neq 0$. Equation (\ref{eqrat2}) implies:
\begin{align*}
	W_{p}(x)=\frac{1}{S_{0}}\left(\frac{1-x^{p}}{1-x}P(x)-x^{p-1}P(x^{p})\right).
\end{align*}
Observe, that $\deg W_{p}(x)=p-2$. If $\deg P\geq 1$, then the degree of the polynomial of the right hand side of the last equation is equal to $\deg (x^{p-1}P(x^{p}))\geq 2p-1>p-2$, a contradiction. Hence, $\deg P=0$, so we can write $P(x)=P_{0}\neq 0$. Then we get from the last equation:
\begin{align*}
	W_{p}(x)=\frac{P_{0}}{S_{0}}\left(1+x+\ldots +x^{p-2}\right),
\end{align*}
that is again a contradiction, since not all coefficients of $W_{p}(x)$ are equal to each other. Thus we may assume that $\deg Q=2$ and $\deg S=1$.

Let us write $S(x)=S_{1}x+S_{0}$. From (\ref{eqrat2}) we get $S(x^{p})\mid x^{p-1}P(x^{p})S(x)$, but $S(x^{p})$ and $P(x^{p})$ are co-prime, so $S(x^{p})\mid x^{p-1}S(x)$. Thus there exists a polynomial $T(x)$ such that $S(x^{p})T(x)=x^{p-1}S(x)$, and by comparing the degrees we get $\deg T=0$. Comparing the coefficients near $x^{p-1}$ implies $S_{0}=0$, i.e., $S(x)=S_{1}x$. Equation (\ref{eqrat2}) implies:
\begin{align*}
	W_{p}(x)=\frac{1}{S_{1}x}\left(\frac{1-x^{p}}{1-x}P(x)-P(x^{p})\right).
\end{align*}
If $\deg P\geq 2$, then $\deg\left(\frac{1}{S_{1}x}\left(\frac{1-x^{p}}{1-x}P(x)-P(x^{p})\right)\right)=p\deg P-1>p-2$. Hence, $\deg P\leq 1$. Write $P(x)=P_{1}x+P_{0}$. Thus
\begin{align*}
	W_{p}(x)=\frac{1}{S_{1}x}\left(\frac{1-x^{p}}{1-x}(P_{1}x+P_{0})-(P_{1}x^{p}+P_{0})\right)=\frac{P_{1}+P_{0}}{S_{1}}\left(1+x+\ldots +x^{p-2}\right),
\end{align*}
that is a contradiction.

Summing up our discussion: we proved that there are no polynomials $P, Q\in\C[x]$ satisfying the equation (\ref{eqrat1}) and thus the function $Y_{p}(x)$ can not be rational.
\end{proof}

\begin{rem}
{\rm In the above proof, instead of use result of Nishioka, we could use classical result of Fatou: {\it if a power series $\sum_{n=0}^{\infty}a_{n}x^{n}$ with integer coefficients converges inside the unit disk, then it is either rational or transcendental over $\C(x)$} \cite{Fa}. However, it is clear that the burden of the proof lies in the proof of irrationality of $Y_{p}(x)$.
}
\end{rem}

An immediate consequence of our result is the following.

\begin{cor}
The sequences $(y_{p}(n))_{n\in\N_{+}}, (z_{p}(n))_{n\in\N_{+}}$ are not periodic.
\end{cor}
\begin{proof}
It is clear that the ordinary generating function of a periodic sequence is rational. However, in the theorem above we proved that the ordinary generating functions of our sequences are transcendental.
\end{proof}

\begin{rem}
{\rm One can investigate further properties of the sequence $(y_{p}(n))_{n\in\N_{+}}$. We note only one property. More precisely, one can easily prove the following summation formula
\begin{align*}
\sum_{k=1}^{p^{n}-1}y_{p}(k)=\frac{1}{2}p(p^{n}-1).
\end{align*}
}
\end{rem}

\section{Further congruences for $d_{m}(pn)-d_{m}(n)$}\label{sec5}

Observe, that Theorem \ref{parythm1} can be viewed as a generalization of Lemma \ref{parylem1}. Therefore, it is natural to use this more general fact in order to generalize some of results that we have previously obtained. In the sequel, we will need the following lemma, that is a generalization of Wolstenholme's theorem \cite{Wol}, which says, that for $p\in\mathbb{P}_{\geq 5}$ the following congruence holds
\begin{align*}
	\binom{pm}{pi}\equiv \binom{m}{i}\Mod{p^{3}}.
\end{align*}
The Wolstenholme's theorem is equivalent to the following pair of the congruences
\begin{align*}
	\sum_{\gamma =1}^{p-1}\frac{1}{\gamma}\equiv0\Mod{p^{2}} \ \ \ \ \ \ \ \ \ \ \ \text{ and } \ \ \ \ \ \ \ \ \ \ \
	\sum_{\gamma =1}^{p-1}\frac{1}{\gamma^{2}}\equiv0\Mod{p},
\end{align*}
that will be used in the proof.

\begin{lem}\label{parylem4}
	Let $m\in\N_{\geq 1}$, $i\in\{0,\ldots ,m\}$, $p\in\mathbb{P}_{\geq 3}$. Then
	\begin{align*}
		\binom{pm}{pi}\equiv \binom{m}{i}\Mod{p^{\nu_{p}(m)+\nu_{p}\left(\binom{m}{i}\right)+3-\chi}},
	\end{align*}
	where $\chi =[p=3]-\left[p=3,\ \nu_{3}(m)\geq 1,\ \nu_{3}\left(\binom{m}{i}\right)=0\right]$.
\end{lem}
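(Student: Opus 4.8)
The plan is to prove the equivalent statement that $\nu_{p}(R-1)\ge\nu_{p}(m)+3-\chi$ for the ratio $R:=\binom{pm}{pi}/\binom{m}{i}$, and to read off the powers of $p$ from the $p$-adic logarithm of $R$. First I would note that the summand $\nu_{p}(\binom{m}{i})$ of the exponent is free: since $\binom{pm}{pi}-\binom{m}{i}=\binom{m}{i}(R-1)$, the lemma is equivalent to $\nu_{p}(R-1)\ge\nu_{p}(m)+3-\chi$. Cancelling the multiples of $p$ in $\binom{pm}{pi}$ against $\binom{m}{i}$ puts the ratio into the product form
\[
R=\prod_{l=0}^{i-1}\prod_{r=1}^{p-1}\frac{r+p(m-i+l)}{r+pl},
\]
all of whose factors are $\equiv1\Mod{p}$; hence $\log R$ converges $p$-adically and, as $p$ is odd, $\nu_{p}(R-1)=\nu_{p}(\log R)$. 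Expanding each $\log(1+pu/r)$ and interchanging the (absolutely convergent) sums gives
\[
\log R=\sum_{k\ge1}\frac{(-1)^{k-1}}{k}\,p^{k}\sigma_{k}\tau_{k},\qquad \sigma_{k}=\sum_{r=1}^{p-1}\frac{1}{r^{k}},\quad \tau_{k}=\sum_{l=0}^{i-1}\bigl[(m-i+l)^{k}-l^{k}\bigr].
\]

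The crucial step, which is what forces the factor $\nu_{p}(m)$ to appear, is that the part of this series coming from the ``$m=0$ specialisation'' vanishes identically. Writing $\rho_{k}:=\sum_{l=0}^{i-1}[(l-i)^{k}-l^{k}]$ (the value of $\tau_{k}$ at $m=0$), one has $\sum_{k}\frac{(-1)^{k-1}}{k}p^{k}\sigma_{k}\rho_{k}=\log\Pi$ with $\Pi=\prod_{l=0}^{i-1}\prod_{r=1}^{p-1}\frac{r+p(l-i)}{r+pl}$. The numerator factors of $\Pi$ run over the integers in $\{-pi+1,\dots,-1\}$ prime to $p$ and the denominator factors over those in $\{1,\dots,pi-1\}$ prime to $p$; the reflection $a\mapsto-a$ is a bijection between these sets, and as there are $i(p-1)$ of them (an even number) we get $\Pi=1$, so $\log\Pi=0$. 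Therefore
\[
\log R=\sum_{k\ge1}\frac{(-1)^{k-1}}{k}\,p^{k}\sigma_{k}\,(\tau_{k}-\rho_{k}),
\]
and each integer $\tau_{k}-\rho_{k}=\sum_{l}[(m-i+l)^{k}-(l-i)^{k}]$ is divisible by $m$, because the two bases differ by $m$. Hence $\nu_{p}(\tau_{k}-\rho_{k})\ge\nu_{p}(m)$ for every $k$.

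For $p\in\mathbb{P}_{\geq 5}$ this finishes the argument once I check that $k-\nu_{p}(k)+\nu_{p}(\sigma_{k})\ge3$ for all $k\ge1$. For $k=1,2$ this is precisely the two Wolstenholme congruences $\nu_{p}(\sigma_{1})\ge2$ and $\nu_{p}(\sigma_{2})\ge1$; for $k\ge3$ one has $\nu_{p}(\sigma_{k})\ge0$ together with $k-\nu_{p}(k)\ge3$, a mild sharpening of Lemma \ref{paryneglem1} (the value $2$ is attained there only at $k=2$). Combining with $\nu_{p}(\tau_{k}-\rho_{k})\ge\nu_{p}(m)$ gives $\nu_{p}(\log R)\ge\nu_{p}(m)+3$, i.e.\ the claim with $\chi=0$.

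The case $p=3$ is where I expect the genuine bookkeeping, and it is exactly where $\chi$ is born. Here both Wolstenholme congruences fail: $\sigma_{1}=\tfrac32$ and $\sigma_{2}=\tfrac54$ give only $\nu_{3}(\sigma_{1})=1$ and $\nu_{3}(\sigma_{2})=0$, so the $k=1$ and $k=2$ terms drop to valuation $\nu_{3}(m)+2$, while the terms $k\ge3$ stay $\ge\nu_{3}(m)+3$ (the critical $k=3$, where $k-\nu_{3}(k)=2$, being rescued by $\nu_{3}(\sigma_{3})=2$). This yields the generic bound $\nu_{3}(\log R)\ge\nu_{3}(m)+2$, accounting for the term $[p=3]$ in $\chi$. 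To recover the lost power under the extra hypotheses $\nu_{3}(m)\ge1$ and $\nu_{3}(\binom{m}{i})=0$, I would invoke Kummer's theorem: the second condition means $i+(m-i)$ has no carries in base $3$, and together with $3\mid m$ this forces the units digits $i_{0}=(m-i)_{0}=0$, whence $3\mid i$ and $3\mid(m-i)$. Since a direct computation gives $\tau_{1}-\rho_{1}=mi$ and $\tau_{2}-\rho_{2}=mi(m-i-1)$, the two bottleneck terms then each pick up the extra factor $\nu_{3}(i)\ge1$ and return to valuation $\ge\nu_{3}(m)+3$, which is precisely the subtracted indicator in the definition of $\chi$.
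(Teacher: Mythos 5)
Your proof is correct, and it takes a genuinely different route from the paper's. The paper starts from the same product decomposition $\binom{pm}{pi}=\binom{m}{i}\prod_{j=0}^{i-1}\prod_{\beta=1}^{p-1}\frac{pm-(pj+\beta)}{pj+\beta}$, but then argues block by block and entirely by finite congruences: it expands $\prod_{\beta=1}^{p-1}(pm-(pj+\beta))$ in powers of $pm$, discards the cubic and higher terms, and reduces the claim for each block to the two shifted Wolstenholme congruences $\sum_{\gamma=1}^{p-1}\frac{1}{pj+\gamma}\equiv 0\Mod{p^{2}}$ and $\sum_{\gamma_{1}<\gamma_{2}}\frac{1}{(pj+\gamma_{1})(pj+\gamma_{2})}\equiv 0\Mod{p}$; the case $p=3$ is then a separate explicit computation ending with the product $\prod_{j=0}^{i-1}\left(1-9m(j-1)\right)$. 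You instead pass to the ratio $R$ and its $p$-adic logarithm, which treats all blocks at once and makes the origin of each piece of the exponent visible: the $\nu_{p}(m)$ comes from $m\mid \tau_{k}-\rho_{k}$ (after your reflection argument $\Pi=1$ removes the $m=0$ part of the series), the $3$ comes from Wolstenholme at $k=1,2$ together with $k-\nu_{p}(k)\geq 3$ for $k\geq 3$, and the lost power at $p=3$ comes precisely from the failure of Wolstenholme there, with $k=3$ rescued by $\nu_{3}(\sigma_{3})=2$. Both proofs rest on the same two arithmetic inputs — Wolstenholme's congruences and the implication that $3\mid m$ together with $3\nmid\binom{m}{i}$ forces $3\mid i$ — but the paper merely asserts the latter, while you justify it via Kummer's theorem. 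What the paper's argument buys is complete elementarity (finite congruence manipulations only, no $p$-adic convergence and no appeal to the isometry $\nu_{p}(\log(1+x))=\nu_{p}(x)$); what yours buys is uniformity and transparency: the exceptional case $p=3$ is not a separate calculation but the same series with different bookkeeping, and your closed forms $\tau_{1}-\rho_{1}=mi$ and $\tau_{2}-\rho_{2}=mi(m-i-1)$ exhibit exactly which extra divisibility repairs the two bottleneck terms.
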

\begin{proof}
	The statement is obvious if $i=0$ or $i=m$. Assume $1\leq i\leq m-1$ and denote $\alpha:=\nu_{p}(m)$. Then
	\begin{align*}
		\binom{pm}{pi}= & \frac{\prod_{j=0}^{pi-1}(pm-j)}{\prod_{j=1}^{pi}j}=\prod_{j=1}^{i}\frac{pm-pj+p}{pj}\prod_{j=0}^{i-1}\prod_{\beta =1}^{p-1}\frac{pm-pj-\beta}{pj+\beta}=\binom{m}{i}\prod_{j=0}^{i-1}\prod_{\beta =1}^{p-1}\frac{pm-(pj+\beta)}{pj+\beta}.
	\end{align*}
	In order to finish the proof, it is enough to show that for each $j$:
	\begin{align*}
		\prod_{\beta =1}^{p-1}(pm-(pj+\beta))\equiv \prod_{\beta =1}^{p-1}(pj+\beta)\Mod{p^{\alpha +3}}.
	\end{align*}
	We have
	\begin{align*}
		\prod_{\beta =1}^{p-1}(pm-(pj+ & \beta)) - \prod_{\beta =1}^{p-1}(pj+\beta) \\
		\equiv & p^{2}m^{2}\sum_{\gamma_{1}<\gamma_{2}}\prod_{\substack{\beta=1 \\ \beta\neq \gamma_{1},\gamma_{2}}}^{p-1}(pj+\beta)-pm\sum_{\gamma =1}^{p-1}\prod_{\substack{\beta=1 \\ \beta\neq \gamma}}^{p-1}(pj+\beta) +\prod_{\beta =1}^{p-1}(pj+\beta)-\prod_{\beta =1}^{p-1}(pj+\beta) \\
		= & \prod_{\beta =1}^{p-1}(pj+\beta)\left(p^{2}m^{2}\sum_{\gamma_{1}<\gamma_{2}}\frac{1}{(pj+\gamma_{1})(pj+\gamma_{2})}-pm\sum_{\gamma =1}^{p-1}\frac{1}{pj+\gamma}\right) \Mod{p^{\alpha +3}}.		
	\end{align*}
	If $p=3$ and $\nu_{3}(m)$ and $\nu_{3}\left(\binom{m}{i}\right)$ are arbitrary, we are in fact interested in the congruence modulo $p^{\alpha+2}$ instead of $p^{\alpha +3}$. In that case, the quantity in the brackets simplifies to
	\begin{align*}
		-pm\sum_{\gamma =1}^{p-1}\frac{1}{pj+\gamma}\equiv -pm\sum_{\gamma =1}^{p-1}\frac{1}{\gamma}\equiv -pm\sum_{\gamma =1}^{p-1}\gamma=-p^{2}m\frac{p-1}{2}\equiv 0\Mod{p^{\alpha +2}},
	\end{align*}
	so the statement holds. When $p=3$ and additionally $\nu_{3}(m)\geq 1$ and $\nu_{3}\left(\binom{m}{i}\right)=0$, then $3^{\alpha+3}\mid 3^{2}m^{2}$, so the expression in the square brackets simplifies in the same way as before. Hence,
	\begin{align*}
		\prod_{\beta =1}^{3-1}(3m-(3j+\beta)) - \prod_{\beta =1}^{3-1}(3j+\beta)= & (3m-(3j+1))(3m-(3j+2))-(3j+1)(3j+2) \\
		\equiv & -3m(3j+1)(3j+2)\left(\frac{1}{3j+1}+\frac{1}{3j+2}\right) \\
		= & -3m(3j+1)(3j+2)\frac{6j+3}{(3j+1)(3j+2)} \\
		= &-9m(2j+1)\equiv 9m(j-1) \Mod{p^{\alpha +3}}.
	\end{align*}
	Therefore,
	\begin{align*}
		(3m-(3j+1))(3m-(3j+2))\equiv (3j+1)(3j+2)+9m(j-1)\Mod{p^{\alpha +3}}.
	\end{align*}
	The conditions $3\mid m$ and $3\nmid \binom{m}{i}$ imply $3\mid i$. Thus
	\begin{align*}
		\binom{3m}{3i}\equiv & \binom{m}{i}\prod_{j=0}^{i-1}\frac{(3m-(3j+1))(3m-(3j+2))}{(3j+1)(3j+2)}
		\equiv  \binom{m}{i}\prod_{j=0}^{i-1}\left(1+\frac{9m(j-1)}{(3j+1)(3j+2)}\right) \\
		\equiv & \binom{m}{i}\prod_{j=0}^{i-1}\left(1-9m(j-1)\right)
		\equiv  \binom{m}{i}\left(1-9m\sum_{j=0}^{i-1}(j-1)\right) \\
		\equiv & \binom{m}{i}\left(1-9m\left(\frac{i(i-1)}{2}-i\right)\right)
		\equiv  \binom{m}{i} \Mod{p^{\alpha +3}}.
	\end{align*}
	The proof of this case is complete.
	
	Assume that $p\geq 5$. Wolstenholme's Theorem implies:
	\begin{align*}
		\sum_{\gamma =1}^{p-1}\frac{1}{pj+\gamma}= & \frac{1}{2}\sum_{\gamma =1}^{p-1}\left(\frac{1}{pj+\gamma}+\frac{1}{pj+p-\gamma}\right) \\
		= & \frac{p}{2}\sum_{\gamma =1}^{p-1}\frac{2j+1}{(pj+\gamma)(pj+p-\gamma)}\equiv -(2j+1)\frac{p}{2}\sum_{\gamma =1}^{p-1}\frac{1}{\gamma^{2}}\equiv 0\Mod{p^{2}},
	\end{align*}
	and
	\begin{align*}
		\sum_{\gamma =1}^{p-1}\frac{1}{(pj+\gamma)^{2}}\equiv\sum_{\gamma =1}^{p-1}\frac{1}{\gamma^{2}}\equiv 0\Mod{p}.
	\end{align*}
	Therefore,
	\begin{align*}
		\sum_{\gamma_{1}<\gamma_{2}}\frac{1}{(pj+\gamma_{1})(pj+\gamma_{2})}=\frac{1}{2}\left[\left(\sum_{\gamma =1}^{p-1}\frac{1}{pj+\gamma}\right)^{2}-\sum_{\gamma =1}^{p-1}\frac{1}{(pj+\gamma )^{2}}\right]\equiv 0\Mod{p}.
	\end{align*}
	Hence, indeed
	\begin{align*}
		\prod_{\beta =1}^{p-1}(pm-(pj+\beta))\equiv \prod_{\beta =1}^{p-1}(pj+\beta)\Mod{p^{\alpha +3}}
	\end{align*}
	and the proof is finished.
\end{proof}

\begin{rem}
	{\rm For $i=1$, it is possible to strengthen the result. Indeed, using the techniques from the proof of Lemma \ref{modp} we get
	\begin{align*}
	\binom{pm}{p}-m= & m\left(\frac{(pm-1)\ldots (pm-p+1)}{(p-1)!}-1\right) \\
	= & \frac{m}{(p-1)!}\left((p(m-1)+p-1)(p(m-1)+p-2)\ldots (p(m-1)+1)-(p-1)!\right) \\
	\equiv & m\left(p^{2}(m-1)^{2}\sum_{\gamma_{1}<\gamma_{2}}\frac{1}{\gamma_{1}\gamma_{2}}+p(m-1)\sum_{\gamma =1}^{p-1}\frac{1}{\gamma}\right) \Mod{p^{\nu_{p}(m)+\nu_{p}(m-1)+3-\chi}}.
	\end{align*}
	We can repeat the proof of the previous fact and get result modulo 
	$p^{\nu_{p}(m)+\nu_{p}(m-1)+3-\chi}$. Observe, that at most one of the number $\nu_{p}(m)$ and $\nu_{p}(m-1)$ may be non-zero. Hence, if $\nu_{p}(m-1)=0$, we can still apply Lemma \ref{modp} that is better in this case. Finally, we get
	\begin{align*}
		\binom{pm}{p}\equiv m \Mod{p^{2\nu_{p}(m)+\nu_{p}(m-1)+3-\chi}}.
\end{align*}
	However, we will not need this fact later. }
\end{rem}

Now we want to state a stronger version of Lemma \ref{parylem2}. Let us define
\begin{align*}
	\vartheta_{m}(n):=d_{m}(pn)-d_{m}(n).
\end{align*}

\begin{lem}\label{modp}
	Let $p\in\mathbb{P}_{\geq 3}$, $m\in\N_{\geq 1}$. Then $\vartheta_{m}(1)\equiv 0\Mod{p^{2\nu_{p}(m)+3-[p=3]}}$ and
	\begin{align*}
		\sum_{i=0}^{\min\{m,n\}}(-1)^{i}\binom{m}{i}\vartheta_{m}(n-i)\equiv (-1)^{n+1}\left[\binom{pm}{pn}-\binom{m}{n}\right] \Mod{p^{2\nu_{p}(m)+3-[p=3]}}.
	\end{align*}
\end{lem}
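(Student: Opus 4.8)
The plan is to derive everything from the functional equation $(1-x)^{pm}H_m(x)=(1-x^p)^mH_m(x^p)$ behind Lemma~\ref{recrel}. Comparing the coefficients of $x^{pn}$ on the two sides gives, for $n\ge1$,
\begin{equation*}
\sum_{i=0}^{\min\{pm,pn\}}(-1)^i\binom{pm}{i}d_m(pn-i)=\sum_{i=0}^{\min\{m,n\}}(-1)^i\binom{m}{i}d_m(n-i).
\end{equation*}
Write $\alpha=\nu_p(m)$ and $m=p^\alpha k$ with $p\nmid k$. I would split the left-hand sum into its parts with $p\mid i$ and $p\nmid i$; putting $i=pj$ (so $(-1)^{pj}=(-1)^j$ as $p$ is odd) and collecting the remaining terms into $R:=\sum_{1\le i\le\min\{pm,pn\},\,p\nmid i}(-1)^i\binom{pm}{i}d_m(pn-i)$, the identity rearranges into
\begin{equation*}
\sum_{i=0}^{\min\{m,n\}}(-1)^i\binom{m}{i}\vartheta_m(n-i)=-\sum_{j=0}^{\min\{m,n\}}(-1)^j\left(\binom{pm}{pj}-\binom{m}{j}\right)d_m(p(n-j))-R.
\end{equation*}

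Next I would isolate the extreme term $j=\min\{m,n\}$. If $n\le m$ it is $j=n$, and since $d_m(0)=1$ it contributes exactly $(-1)^{n+1}\big(\binom{pm}{pn}-\binom{m}{n}\big)$, the claimed right-hand side; if $n>m$ then $\binom{pm}{pn}=\binom{m}{n}=0$, the right-hand side is $0$, and no such term occurs. For the terms with $1\le j<\min\{m,n\}$ (the $j=0$ term vanishes identically) I would combine Lemma~\ref{parylem4}, which gives $\nu_p\big(\binom{pm}{pj}-\binom{m}{j}\big)\ge\alpha+\nu_p\binom{m}{j}+3-\chi$, with $\nu_p(d_m(p(n-j)))=\alpha+1$ from Theorem~\ref{parythm1}; a short check of the two cases for $\chi$ shows every such product has valuation $\ge 2\alpha+3\ge 2\nu_p(m)+3-[p=3]$, so these terms are negligible.

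The hard part will be proving $R\equiv0\Mod{p^{2\nu_p(m)+3-[p=3]}}$. Every summand of $R$ has valuation $\ge 2\alpha+2$, because $p\nmid i$ forces $\nu_p\binom{pm}{i}\ge\alpha+1$ while $\nu_p(d_m(pn-i))=\alpha+1$ (Theorem~\ref{parythm1}); this already gives $R\equiv0\Mod{p^{2\alpha+2}}$ and settles $p=3$. For $p\ge5$ one extra power of $p$ is needed, and it comes from a Wolstenholme-type cancellation. As the lowest nonzero digit of $pn-i$ is $(-i)\bmod p$, Theorem~\ref{parythm1} gives $d_m(pn-i)\equiv -pm\,i^{-1}\Mod{p^{\alpha+2}}$; inserting this together with $\binom{pm}{i}=\frac{pm}{i}\binom{pm-1}{i-1}$ reduces the claim to
\begin{equation*}
\frac{R}{p^{2\alpha+2}}\equiv -k^2\sum_{\substack{i\ge1\\ p\nmid i}}(-1)^i i^{-2}\binom{pm-1}{i-1}\Mod{p}.
\end{equation*}
Grouping $i$ by its lowest base-$p$ digit $b=(i-1)\bmod p$ and applying Lucas' theorem, the identities $(-1)^i=-(-1)^{b}(-1)^{\lfloor(i-1)/p\rfloor}$ and $i^{-2}\equiv(b+1)^{-2}$ let the lowest digit factor out as the common multiplier
\begin{equation*}
\sum_{b=0}^{p-2}(-1)^{b}(b+1)^{-2}\binom{p-1}{b}\equiv\sum_{j=1}^{p-1}j^{-2}\Mod{p}.
\end{equation*}
This separation is legitimate for every cut-off $i\le\min\{pm,pn\}$ precisely because $pm-1$ and $pn-1$ both have lowest digit $p-1$, so neither the binomial coefficient nor the summation range restricts $b$; and the displayed sum is $\equiv0\Mod p$ for $p\ge5$ by the second Wolstenholme congruence (while it equals $2$ for $p=3$, explaining the exponent drop).

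Finally, the statement about $\vartheta_m(1)$ is the case $n=1$ of the identity above: since $\vartheta_m(0)=0$ it reads $\vartheta_m(1)\equiv\binom{pm}{p}-m$, and Lemma~\ref{parylem4} at $i=1$ (where $\nu_p\binom{m}{1}=\alpha$ and the correction term in $\chi$ is forced to vanish, so $\chi=[p=3]$) gives $\binom{pm}{p}\equiv m$ to the modulus $p^{2\nu_p(m)+3-[p=3]}$, whence $\vartheta_m(1)\equiv0$.
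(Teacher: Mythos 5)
Your proposal is correct and follows essentially the same route as the paper's proof: the same splitting of the coefficient identity at $x^{pn}$ into the $p\mid i$ part (your sum with $\binom{pm}{pj}-\binom{m}{j}$, controlled by Lemma \ref{parylem4} together with Theorem \ref{parythm1}, with the extreme term $j=\min\{m,n\}$ producing the right-hand side) and the $p\nmid i$ part (your $R$, which is the paper's $\Sigma_{1}$ up to sign, where Wolstenholme's congruence $\sum_{j=1}^{p-1}j^{-2}\equiv 0\Mod{p}$ supplies the extra power of $p$ for $p\geq 5$). The remaining differences are presentational only: you phrase the paper's product-formula evaluation of $\binom{pm}{i}$ modulo $p^{\alpha+2}$ as Lucas' theorem applied to $\binom{pm-1}{i-1}$ (the two computations coincide, since the paper's $\prod_{j=1}^{\delta}\frac{m-j}{j}$ is $\binom{m-1}{\delta}$), you deduce the claim $\vartheta_{m}(1)\equiv 0$ from the $n=1$ case of the general identity plus Lemma \ref{parylem4} at $i=1$ instead of the paper's separate direct computation, and you handle $p=3$ explicitly where the paper dismisses it as analogous.
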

\begin{proof}
	We prove only the case $p\geq 5$. If $p=3$, the proof is analogous and in fact simpler because of some additional cancellations. Let us write $m=p^{\alpha}k$, where $p\nmid k$. For $n=1$ we have
	\begin{align*}
		d_{m}(p)= & -\sum_{i=1}^{p}(-1)^{i}\binom{pm}{i}d_{m}(p-i)+d_{m}(1)-md_{m}(0) \\
		 = & d_{m}(1)+\left[\binom{pm}{p}-m\right]d_{m}(0)-\sum_{i=1}^{p-1}(-1)^{i}\binom{pm}{i}d_{m}(p-i).
	\end{align*}
	By Lemma \ref{parylem4} we get that $p^{2\alpha +3}\mid \binom{pm}{p}-m$. Moreover,
	\begin{align*}
		\binom{pm}{i}=pm\frac{(pm-1)\ldots (pm-i+1)}{i!}\equiv pm\frac{(-1)^{i-1}}{i}\Mod{p^{\alpha +2}}
	\end{align*}
	and using Theorem \ref{parythm1} we obtain the congruence
	\begin{align*}
		\sum_{i=1}^{p-1}(-1)^{i}\binom{pm}{i}d_{m}(n-i)\equiv & \sum_{i-1}^{p-1}(-1)^{i}pm\frac{(-1)^{i-1}}{i}\frac{pm}{i}=-p^{2}m^{2}\sum_{i=1}^{p-1}\frac{1}{i^{2}}\equiv 0\Mod{p^{2\alpha +3}}.
	\end{align*}
	Thus always $d_{m}(p)\equiv d_{m}(1)\Mod{p^{2\alpha +3}}$, i.e., $\vartheta_{m}(1)\equiv 0\Mod{p^{2\alpha +3}}$. Now assume, that $n$ is arbitrary
	. The recurrence relation for $d_{m}(n)$ gives
	\begin{align*}
		d_{m}(pn)= & -\sum_{i=1}^{\min\{pm,pn\}}(-1)^{i}\binom{pm}{i}d_{m}(pn-i)+\sum_{i=0}^{\min\{m,n\}}(-1)^{i}\binom{m}{i}d_{m}(n-i) \\
		 = & d_{m}(n) -\sum_{\substack{ i=1 \\ p\nmid i}}^{\min\{pm,pn\}}(-1)^{i}\binom{pm}{i}d_{m}(pn-i) \\
		 & +\sum_{i=1}^{\min\{m,n\}}(-1)^{i}\binom{m}{i}d_{m}(n-i) -\sum_{i=1}^{\min\{m,n\}}(-1)^{pi}\binom{pm}{pi}d_{m}(pn-pi) \\
		 = & d_{m}(n)+ \Sigma_{1}+\Sigma_{2},
	\end{align*}
	where
	\begin{align*}
		\Sigma_{1}= & -\sum_{\substack{ i=1 \\ p\nmid i}}^{\min\{pm,pn\}}(-1)^{i}\binom{pm}{i}d_{m}(pn-i), \\
		\Sigma_{2}= & \sum_{i=1}^{\min\{m,n\}}(-1)^{i}\binom{m}{i}d_{m}(n-i)-\sum_{i=1}^{\min\{m,n\}}(-1)^{i}\binom{pm}{pi}d_{m}(p(n-i)).
	\end{align*}
	At first, we deal with $\Sigma_{1}$. Observe, that if $i=\delta p+\gamma$ for some $\delta\in\N$ and $\gamma\in\{1,\ldots ,p-1\}$, then
	\begin{align*}
		\binom{pm}{i}= & pm\frac{(pm-1)\ldots (pm-i+1)}{i!}=pm\frac{\prod_{j=1}^{\delta}(pm-pj)}{\prod_{j=1}^{\delta}pj}\cdot\frac{\prod_{j=1,p\nmid j}^{i-1}(pm-j)}{\prod_{j=1,p\nmid j}^{i-1}j}\cdot \frac{1}{i} \\
		\equiv & pm\prod_{j=1}^{\delta}\frac{m-j}{j}\cdot\frac{\prod_{j=1,p\nmid j}^{i-1}(-j)}{\prod_{j=1,p\nmid j}^{i-1}j}\cdot \frac{1}{i}=pm\prod_{j=1}^{\delta}\frac{m-j}{j}\cdot(-1)^{i-1-\delta}\cdot\frac{1}{i} \Mod{p^{\alpha+2}}.
	\end{align*}
	Hence,
	\begin{align*}
		\Sigma_{1}\equiv & -\sum_{\substack{ i=1 \\ p\nmid i}}^{\min\{pm,pn\}}(-1)^{i}pm\prod_{j=1}^{\left\lfloor\frac{i}{p}\right\rfloor}\frac{m-j}{j}\cdot(-1)^{i-1-\left\lfloor\frac{i}{p}\right\rfloor}\cdot\frac{1}{i}\cdot\frac{pm}{-i} \\
		= & -p^{2}m^{2}\sum_{l=0}^{\min\{m-1,n-1\}}\sum_{r=1}^{p-1}(-1)^{l}\frac{1}{(pl+r)^{2}}\prod_{j=1}^{l}\frac{m-j}{j} \\
		= & -p^{2}m^{2}\sum_{l=0}^{\min\{m-1,n-1\}}(-1)^{l}\prod_{j=1}^{l}\frac{m-j}{j}\sum_{r=1}^{p-1}\frac{1}{(pl+r)^{2}} \\
		\equiv & -p^{2}m^{2}\sum_{l=0}^{\min\{m-1,n-1\}}(-1)^{l}\prod_{j=1}^{l}\frac{m-j}{j}\sum_{r=1}^{p-1}\frac{1}{r^{2}} \\
		\equiv & -p^{2}m^{2}\left(\sum_{r=1}^{p-1}r^{2}\right)\left(\sum_{l=0}^{\min\{m-1,n-1\}}(-1)^{l}\prod_{j=1}^{l}\frac{m-j}{j}\right) \\
		= & -p^{3}m^{2}\frac{(p-1)(2p-1)}{6}\left(\sum_{l=0}^{\min\{m-1,n-1\}}(-1)^{l}\prod_{j=1}^{l}\frac{m-j}{j}\right)\equiv 0\Mod{p^{2\alpha +3}}.
	\end{align*}
	Now we focus on $\Sigma_{2}$. We can write:
	\begin{align*}
		\Sigma_{2}= & \sum_{i=1}^{\min\{m,n\}}(-1)^{i}\binom{m}{i}\left[d_{m}(n-i)-d_{m}(p(n-i))\right]-\sum_{i=1}^{\min\{m,n\}}(-1)^{i}\left[\binom{pm}{pi}-\binom{m}{i}\right]d_{m}(p(n-i)) \\
		\equiv & -\sum_{i=1}^{\min\{m,n\}}(-1)^{i}\binom{m}{i}\vartheta_{m}(n-i)+(-1)^{n+1}\left[\binom{pm}{pn}-\binom{m}{n}\right] \Mod{p^{2\alpha +3}},
	\end{align*}
	because $p^{\alpha+3}\mid \binom{pm}{pn}-\binom{m}{n}$ by Lemma \ref{parylem4}, and $p^{\alpha+1}\mid d_{m}(p(n-i))$ for $i\neq n$. The result follows.
\end{proof}

\begin{rem}
{\rm From the above result we can write exact expression for the remainder of the division of $d_{m}(pn)-d_{m}(p)$ by $p^{2\nu_{p}(m)+3-[p=3]}$. More precisely, let
\begin{align*}
\Theta_{m}(x):=\sum_{n=0}^{\infty}\vartheta_{m}(n)x^{n}
\end{align*}
be the generating function of the sequence $(\vartheta_{m}(n))_{n=0}^{\infty}$. Lemma \ref{modp} can be restated in the following way:
\begin{align*}
	(1-x)^{m}\Theta_{m}(x)\equiv & \sum_{n=0}^{m}(-1)^{n+1}\left[\binom{pm}{pn}-\binom{m}{n}\right]x^{n} \\
	 \equiv & \sum_{n=0}^{m}(-1)^{n+1}\binom{pm}{pn}x^{n}+(1-x)^{m}\Mod{p^{2\nu_{p}(m)+3-[p=3]}}.
\end{align*}
Equivalently, due to the identity
\begin{align*}
\frac{1}{(1-x)^{m}}=\sum_{n=0}^{\infty}\binom{n+m-1}{n}x^{n},
\end{align*}
we get the congruence
\begin{align*}
\Theta_{m}(x)&\equiv\left(\sum_{n=0}^{\infty}\binom{n+m-1}{n}x^{n}\right)\left(\sum_{n=0}^{m}(-1)^{n+1}\left(\binom{pm}{pn}-\binom{m}{n}\right)x^{n}\right)\\
             &\equiv\sum_{n=0}^{\infty}\left(\sum_{k=0}^{\op{min}\{m,n\}}(-1)^{k+1}\left(\binom{pm}{pk}-\binom{m}{k}\right)\binom{n-k+m-1}{n-k}\right)x^{n}\Mod{p^{2\nu_{p}(m)+3-[p=3]}}.
\end{align*}

Observe, that we can also rewrite the above functional equation for $\Theta_{m}$ as
\begin{align*}
	-1+\Theta_{m}(x)\equiv & \frac{\sum_{n=0}^{m}(-1)^{n+1}\binom{pm}{pn}}{(1-x)^{m}} \\
	= & \sum_{n=0}^{\infty}\left(\sum_{k=0}^{\op{min}\{m,n\}}(-1)^{k+1}\binom{pm}{pk}\binom{n-k+m-1}{n-k}\right)x^{n} \Mod{p^{2\nu_{p}(m)+3-[p=3]}}.
\end{align*}

Thus simply:
\begin{equation*}
	\begin{aligned}
		d_{m}(pn)-d_{m}(n)\equiv & \sum_{k=0}^{\op{min}\{m,n\}}(-1)^{k+1}\left(\binom{pm}{pk}-\binom{m}{k}\right)\binom{n-k+m-1}{n-k} \\
		\equiv & \sum_{k=0}^{\op{min}\{m,n\}}(-1)^{k+1}\binom{pm}{pk}\binom{n-k+m-1}{n-k} \Mod{p^{2\nu_{p}(m)+3-[p=3]}}
	\end{aligned}
\end{equation*}
for all $n\geq 1$.
}
\end{rem}

Lemma \ref{modp} allow us to simply get the following theorem concerning the problem of finding natural numbers $k$ such that the congruence $d_{m}(pn)\equiv d_{m}(n)\Mod{p^{\nu_{p}(m)+k}}$ holds for all $n\geq 1$.

\begin{thm}\label{parythm3}
	Let $m\in\N_{\geq 1}$ and $p\in\mathbb{P}_{\geq 3}$. If one of the following conditions holds:
	\begin{enumerate}
		\item $p\geq 5$,
		\item $p=3$ and $\nu_{3}(m)\geq 1$,
	\end{enumerate}
	then the congruence
	\begin{align*}
		d_{m}(pn)\equiv d_{m}(n)\Mod{p^{\nu_{p}(m)+3}}
	\end{align*}
	is true for all $n\geq 1$.	
\end{thm}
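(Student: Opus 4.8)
The plan is to combine Lemma \ref{modp} with Lemma \ref{parylem4} and then run a strong induction on $n$. The first thing I would record is that under either hypothesis the exponent appearing in Lemma \ref{modp} dominates the target exponent. Indeed, if $p\geq 5$ then $2\nu_{p}(m)+3-[p=3]=2\nu_{p}(m)+3\geq \nu_{p}(m)+3$, while if $p=3$ and $\nu_{3}(m)\geq 1$ then $2\nu_{3}(m)+3-[p=3]=2\nu_{3}(m)+2\geq \nu_{3}(m)+3$. Hence in both cases every congruence stated in Lemma \ref{modp} automatically holds modulo the weaker modulus $p^{\nu_{p}(m)+3}$.

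Next I would show that the right-hand side in the second congruence of Lemma \ref{modp} vanishes modulo $p^{\nu_{p}(m)+3}$; that is, $\binom{pm}{pn}\equiv\binom{m}{n}\Mod{p^{\nu_{p}(m)+3}}$. This is where Lemma \ref{parylem4} enters. For $p\geq 5$ we have $\chi=0$, so that lemma gives valuation at least $\nu_{p}(m)+\nu_{p}\left(\binom{m}{n}\right)+3\geq\nu_{p}(m)+3$, which suffices. For $p=3$ with $\nu_{3}(m)\geq 1$ I would split into two subcases according to $\nu_{3}\left(\binom{m}{n}\right)$: if $\nu_{3}\left(\binom{m}{n}\right)=0$ then $\chi=0$ and the lemma yields valuation at least $\nu_{3}(m)+3$; if $\nu_{3}\left(\binom{m}{n}\right)\geq 1$ then $\chi=1$ and the lemma yields valuation at least $\nu_{3}(m)+\nu_{3}\left(\binom{m}{n}\right)+2\geq\nu_{3}(m)+3$. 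In either subcase the difference $\binom{pm}{pn}-\binom{m}{n}$ is divisible by $p^{\nu_{p}(m)+3}$.

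With the right-hand side eliminated, the second congruence of Lemma \ref{modp} becomes, for all $n\geq 1$,
\begin{align*}
\sum_{i=0}^{\op{min}\{m,n\}}(-1)^{i}\binom{m}{i}\vartheta_{m}(n-i)\equiv 0\Mod{p^{\nu_{p}(m)+3}}.
\end{align*}
I would then prove $\vartheta_{m}(n)\equiv 0\Mod{p^{\nu_{p}(m)+3}}$ by strong induction on $n$. The base case $n=1$ is exactly the first assertion of Lemma \ref{modp} together with the exponent comparison above. For the inductive step, the $i=0$ term of the sum is $\vartheta_{m}(n)$, while each term with $i\geq 1$ is a multiple of $\vartheta_{m}(n-i)$ with $0\leq n-i<n$; these vanish modulo $p^{\nu_{p}(m)+3}$ either by the inductive hypothesis (when $n-i\geq 1$) or trivially because $\vartheta_{m}(0)=d_{m}(0)-d_{m}(0)=0$. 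Hence $\vartheta_{m}(n)\equiv 0\Mod{p^{\nu_{p}(m)+3}}$, which is the desired congruence $d_{m}(pn)\equiv d_{m}(n)$.

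The only genuinely delicate point is the bookkeeping of the correction term $\chi$ from Lemma \ref{parylem4} in the case $p=3$; everything else is an exponent comparison and a clean convolution induction. I expect this subcase analysis for $p=3$ to be the main (though minor) obstacle, since it is precisely where the hypothesis $\nu_{3}(m)\geq 1$ is needed and where the bound would otherwise fail.
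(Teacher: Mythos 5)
Your proposal is correct and follows essentially the same route as the paper: the exponent comparison $2\nu_{p}(m)+3-[p=3]\geq \nu_{p}(m)+3$ under either hypothesis, the elimination of the right-hand side of Lemma \ref{modp} via Lemma \ref{parylem4}, and the convolution induction on $n$ (which the paper leaves implicit in the phrase ``it is enough to show''). Your subcase split on $\nu_{3}\left(\binom{m}{n}\right)$ for $p=3$ is just a cleaner rendering of the same $\chi$-bookkeeping that the paper phrases in terms of $3\mid i$ versus $3\nmid i$.
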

\begin{proof}
	It is easy to check, that if $p$ and $m$ satisfy one of the conditions from the statement then $2\nu_{p}(m)+3-[p=3]\geq \nu_{p}(m)+3$. Lemma \ref{modp} implies, that in order to prove the theorem, it is enough to show that $p^{\alpha +3}\mid \binom{pm}{pi}-\binom{m}{i}$ for $p$ and $m$ satisfying one of the conditions from the statement, and $i\in\{1,\ldots ,m\}$. If $p\geq 5$ it follows directly from Lemma \ref{parylem4}. Assume $p=3$ and $\nu_{3}(m)\geq 1$. From Lemma \ref{parylem4} it is enough to show that then $\nu_{p}\left(\binom{m}{i}\right)+3-\chi\geq 3$, i.e., $\nu_{p}\left(\binom{m}{i}\right)\geq \chi$. If $3\mid i$ this is obvious because $\chi =0$ then. If $3\nmid i$, then $3\mid \binom{m}{i}$, so $\nu_{p}\left(\binom{m}{i}\right)\geq 1=\chi$, so the inequality is also satisfied.
\end{proof}

\section{Questions, problems and conjectures}\label{sec6}

In this section we collect some questions, problems and conjectures which appeared during our work.

\bigskip

It is well known that if $k\in\N_{+}$ and $t\equiv 1 \Mod{2}$, then
\begin{align*}
c_{1}(2^{2k+1}t)-c_{1}(2^{2k-1}t)&\equiv 0\Mod{2^{3k+2}},\\
c_{1}(2^{2k}t)-c_{1}(2^{2k-2}t)&\equiv 0\Mod{2^{3k}}
\end{align*}
(remember $c_{1}(n)=b(2n)$, where $b(n)$ counts the binary partitions of $n$). The above congruences were conjectured by Churchhouse in \cite{Chu} and independently proved by R\"{o}dseth \cite{Rod} and Gupta \cite{Gup}. Moreover, it is known that there is no higher power of 2 which divides $c_{1}(4n)-c_{1}(n)$. These results generated a lot of research devoted to certain differences connected with other partition functions. This result motivates the question concerning the divisibility of the number $c_{m}(2^{k+2}n)-c_{m}(2^{k}n)$ by powers of 2. We performed some numerical computations in case of $m\in\{2,3,\ldots,10\}$ and $n\leq 10^5$ and believe that the following is true.

\begin{conj}
For $k\in\N_{+}$ and each $n\in\N_{+}$, we have:
\begin{align*}
\nu_{2}(c_{2k}(4n)-c_{2k}(n))=\nu_{2}(n)+2\nu_{2}(k)+3.
\end{align*}
Moreover, for $k\in\N$ and $n\in\N_{+}$ the following inequalities hold
\begin{align*}
\nu_{2}(c_{4k+1}(4n)-c_{4k+1}(n))&\geq \nu_{2}(n)+3,\\
\nu_{2}(c_{4k+3}(4n)-c_{4k+3}(n))&\geq \nu_{2}(n)+6.
\end{align*}
In each case the equality holds for infinitely many $n\in\N$.
\end{conj}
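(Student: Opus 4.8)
The plan is to transport the generating-function and binomial-congruence method of Section~\ref{sec5} to the prime $2$. The starting point is the single-step functional relation coming from $(1-x)^{m}C_{m}(x)=(1+x)^{m}C_{m}(x^{2})$, namely
$$(1-x)^{2m}C_{m}(x)=(1-x^{2})^{m}C_{m}(x^{2}),\qquad (1-x)^{2m}C_{m}(x)=(1+x^{2})^{m}C_{m}(x^{4}),$$
the second identity following by one further substitution. Writing $\vartheta_{m}(n):=c_{m}(2n)-c_{m}(n)$ and comparing coefficients of $x^{2n}$ in the first identity, then splitting the sum over even and odd indices exactly as in the proof of Lemma~\ref{modp}, yields a congruence of the shape $\sum_{i}\binom{2m}{2i}\vartheta_{m}(n-i)\equiv(\text{explicit source term})\Mod{2^{M}}$. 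Since $c_{m}(4n)-c_{m}(n)=\vartheta_{m}(2n)+\vartheta_{m}(n)$, the whole problem reduces to understanding $\vartheta_{m}$ to a sufficiently high power of $2$; I would package this as a congruence for the generating function $\Theta_{m}(x)=\sum_{n}\vartheta_{m}(n)x^{n}$ modulo $2^{M}$, in the spirit of the Remark following Lemma~\ref{modp}.

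The arithmetic input replacing Wolstenholme's theorem (Lemma~\ref{parylem4}) is a $2$-adic estimate for the even-index binomials. Here I would use the elementary identity $(1-x)^{2}=(1-x^{2})-2x(1-x)$, which gives
$$(1-x)^{2m}-(1-x^{2})^{m}=\sum_{j=1}^{m}\binom{m}{j}(-2)^{j}x^{j}(1-x)^{j}(1-x^{2})^{m-j};$$
reading off the coefficient of $x^{2i}$ controls $\binom{2m}{2i}-(-1)^{i}\binom{m}{i}$ and gives $\nu_{2}\!\left(\binom{2m}{2i}-(-1)^{i}\binom{m}{i}\right)\geq\nu_{2}(m)+1$, the low-order terms in $j$ governing the exact valuation when $m$ is even. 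Because at $p=2$ the sums $\sum_{\gamma=1}^{p-1}\gamma^{-1}$ and $\sum_{\gamma=1}^{p-1}\gamma^{-2}$ equal $1$ rather than vanishing, these congruences are genuinely weaker than in the odd-prime case, and the parity factor $(-1)^{i}$ in $(1-x^{2})^{m}=\sum_{i}(-1)^{i}\binom{m}{i}x^{2i}$ (absent for odd $p$, where $(-1)^{pi}=(-1)^{i}$) forces $i$ even and $i$ odd to be treated separately. This weakness, together with the residue of $m$ modulo $4$, is precisely what produces the constant $2\nu_{2}(k)$ in the even formula and the gap between the bounds $3$ and $6$.

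I would prove the even case $m=2k$ first, since it is an exact equality: strong induction on $n$ establishing the sharper statement that $\psi_{2k}(n):=c_{2k}(4n)-c_{2k}(n)$ is congruent to an explicit $2$-adic unit times $2^{\nu_{2}(n)+2\nu_{2}(k)+3}$ modulo $2^{\nu_{2}(n)+2\nu_{2}(k)+4}$. The dependence on $n$ only through $\nu_{2}(n)$ should come out as a self-similarity $\nu_{2}(\psi_{2k}(2n))=\nu_{2}(\psi_{2k}(n))+1$ read off from the recurrence, reducing everything to the base case $n$ odd, where the closed forms $c_{m}(1)=2m$ and $c_{m}(2)=2m(m+1)$ give $\vartheta_{m}(1)=2m^{2}$ and pin the valuation down directly (for $m$ even, $\vartheta_{m}(2)$ has strictly larger valuation, so it does not interfere). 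For the odd cases I would first record, from Theorem~\ref{mainthm} together with $t_{4n}=t_{n}$ and $t_{4n-1}=t_{n-1}$ — so that the predicate $t_{n}=t_{n-1}$ is invariant under $n\mapsto 4n$ — that $c_{m}(4n)$ and $c_{m}(n)$ always carry the same $2$-adic valuation; equivalently $\vartheta_{m}(2n)\equiv-\vartheta_{m}(n)$ to higher order, each of $\vartheta_{m}(2n),\vartheta_{m}(n)$ having valuation exactly $1$ by Corollary~\ref{cor0}. Pushing this cancellation through the $\Theta_{m}$-congruence yields the bounds $\nu_{2}(n)+3$ and $\nu_{2}(n)+6$, the difference reflecting the different orders of vanishing of the source term in the classes $m\equiv 1$ and $m\equiv 3\Mod 4$; the ``equality for infinitely many $n$'' assertions then follow by exhibiting an explicit progression (the odd $n$, via the same base case, e.g.\ $\psi_{1}(1)=8$ and $\psi_{3}(1)=192$) on which the leading coefficient is a unit.

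The hardest step, I expect, is making the bound $6$ sharp in the case $m=4k+3$ and certifying its equality cases. Because the $2$-adic binomial congruences are so weak, several consecutive terms of the recurrence land in the same $2$-adic order, so bounding each term from below is not enough: one must evaluate the relevant alternating coefficient sum modulo a high power of $2$ and show it is a unit, exactly as the cancellations $\Sigma_{1},\Sigma_{2}$ had to be computed exactly in the proof of Lemma~\ref{modp}. Controlling this sum uniformly in $k$, so as to separate the universal lower bound from the sharpness assertion, is the analytic crux of the conjecture.
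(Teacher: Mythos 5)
First, a point of calibration: in the paper this statement is not a theorem but a \emph{conjecture}, supported only by numerical evidence (for $m\in\{2,\ldots,10\}$ and $n\leq 10^{5}$); the paper contains no proof to compare against. Your proposal must therefore stand on its own, and as written it is a strategy outline rather than a proof: every substantive step is introduced with ``I would'', and you concede at the end that the ``analytic crux'' remains open. The ingredients you do verify --- $c_{m}(1)=2m$, $c_{m}(2)=2m(m+1)$, hence $\vartheta_{m}(1)=2m^{2}$; the values $\psi_{1}(1)=8$ and $\psi_{3}(1)=192$; the identities $t_{4n}=t_{n}$, $t_{4n-1}=t_{n-1}$ and their consequence via Theorem~\ref{mainthm} and Corollary~\ref{cor0} that $\nu_{2}(c_{m}(4n))=\nu_{2}(c_{m}(n))$ for odd $m$ --- are all correct, but they are base cases and valuation-preservation facts, not the mechanism that produces the conjectured formula.

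The structural gap is this: the Section~\ref{sec5} machinery you propose to transport produces congruences whose modulus is \emph{uniform in $n$} (Lemma~\ref{modp} and Theorem~\ref{parythm3} give $d_{m}(pn)\equiv d_{m}(n)$ modulo a power of $p$ depending only on $m$), whereas the conjectured valuation $\nu_{2}(n)+2\nu_{2}(k)+3$ is unbounded in $n$. A single congruence for $\Theta_{m}(x)$ modulo a fixed $2^{M}$ therefore cannot prove the statement; one needs an iterative, self-improving scheme in the style of R\"{o}dseth's and Gupta's proofs of the original Churchhouse conjecture, where each pass through the functional equation gains a fixed power of $2$ as $n$ is replaced by $n/2$. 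You gesture at exactly this with the claimed self-similarity $\nu_{2}(\psi_{2k}(2n))=\nu_{2}(\psi_{2k}(n))+1$, but that identity \emph{is} the theorem (in the even case), and you give no derivation --- it cannot simply be ``read off from the recurrence'', since the recurrence mixes several terms of the same $2$-adic order, which is the very difficulty you acknowledge in the odd case. Moreover, your replacement for Wolstenholme, $\nu_{2}\left(\binom{2m}{2i}-(-1)^{i}\binom{m}{i}\right)\geq\nu_{2}(m)+1$, while true, is far too weak for the job: with $m=2k$ the target moduli are of size $2^{\nu_{2}(n)+2\nu_{2}(m)+2}$ and larger, so this estimate cannot even account for the term $2\nu_{2}(k)$, let alone the $\nu_{2}(n)$ term. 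Finally, the sharpness assertions (exact equality in the even case, equality for infinitely many $n$ in the odd cases) rest only on the single data points at $n=1$; nothing in the outline propagates them. In short, the approach points in a reasonable direction, but none of the three assertions of the conjecture is actually established, and the key quantitative mechanism is missing.
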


In case of $p\in\mathbb{P}_{\geq 3}$ we can use Theorem \ref{parythm3} to get some information about the behaviour of the differences $d_{m}(p^{2}n)-d_{m}(n)$ modulo powers of $p$. However, we predict that results obtained in this way are far from being optimal. For example, a direct application of Theorem \ref{parythm3} gives
\begin{align*}
	d_{m}(p^{2}n)-d_{m}(n)=\left(d_{m}(p^{2}n)-d_{m}(pn)\right)+\left(d_{m}(pn)-d_{m}(n)\right)\equiv 0\Mod{p^{\nu_{p}(m)+3}}.
\end{align*}
On the other hand, we believe that the following, much stronger property, is true.

\begin{conj}
Assume that $m=pk+i$ for some $k\in\N$ and $i\in\{0,\ldots ,p-1\}$. Then we have the following equality:
\begin{align*}
\nu_{p}(d_{m}(p^2n)-d_{m}(n))=\nu_{p}(n)+2\nu_{p}(m)+3-[p=3].
\end{align*}
In the case $i=p-1$ the following inequality is true:
\begin{equation*}
\nu_{p}(d_{pk+p-1}(p^2n)-d_{pk+p-1}(n))\geq \nu_{p}(n)+4-[p=3].
\end{equation*}
For infinitely many values of $n$ the above inequality is an equality.
\end{conj}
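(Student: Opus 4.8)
The plan is to reduce the two-step difference to the one-step difference $\vartheta_m$ and then to control $\vartheta_m$ one $p$-adic digit beyond what Lemma \ref{modp} supplies. Writing
\[
d_m(p^2 n)-d_m(n)=\vartheta_m(pn)+\vartheta_m(n),
\]
both summands are divisible by $p^{2\nu_p(m)+3-[p=3]}$ by Lemma \ref{modp}, so this modulus is precisely the threshold at which the generic case $\nu_p(n)=0$ is decided. To read off the \emph{exact} valuation, and in particular to make the summand $\nu_p(n)$ visible, I would first determine $\vartheta_m(n)$ modulo $p^{2\nu_p(m)+4-[p=3]}$, that is, compute its leading $p$-adic coefficient explicitly.

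The first ingredient is a sharpening of the binomial congruence of Lemma \ref{parylem4}. In its proof the Wolstenholme sums $\sum_{\gamma=1}^{p-1}\tfrac1\gamma$ and $\sum_{\gamma=1}^{p-1}\tfrac1{\gamma^2}$ are simply discarded modulo $p^2$ and $p$ respectively; instead I would retain their leading nonzero $p$-adic parts (equivalently, the power sums $\sum_{\gamma=1}^{p-1}\gamma^{j}$ that already surface in the computation of $\Sigma_1$ in Lemma \ref{modp}, expressible through Bernoulli numbers). This produces the next coefficient in the expansion of $\binom{pm}{pi}-\binom{m}{i}$, and, fed into the recurrence of Lemma \ref{modp}, an explicit formula for the leading term of $\vartheta_m(n)$. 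Evaluating this for $p\nmid n$ by means of the lowest nonzero base-$p$ digit of $n$ furnished by Corollary \ref{parycor2}, I expect the leading term to be a $p$-adic unit times $p^{2\nu_p(m)+3-[p=3]}$ whenever $m\not\equiv -1\Mod{p}$, which already yields the stated equality in the base case $\nu_p(n)=0$. The residue class $i=p-1$ is exactly the one in which this leading coefficient cancels: there $\nu_p(m)=0$ and the generic main term vanishes, which is the source of the extra power of $p$ recorded by the inequality $\nu_p\big(d_{pk+p-1}(p^2 n)-d_{pk+p-1}(n)\big)\ge \nu_p(n)+4-[p=3]$.

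The second ingredient is the self-similar mechanism that produces $\nu_p(n)$. The functional equation $(1-x)^{pm}H_m(x)=(1-x^p)^m H_m(x^p)$ makes the decimation operator $\mathcal{U}_p\colon\sum a_j x^j\mapsto\sum a_{pj}x^j$ act multiplicatively, $\mathcal{U}_p H_m=B\cdot H_m$ with $B=\mathcal{U}_p\big((1-x^p)^m(1-x)^{-pm}\big)$, whence
\[
\sum_{n=0}^{\infty}\big(d_m(p^2 n)-d_m(n)\big)x^{n}=\big(\mathcal{U}_p(AB)-1\big)H_m(x),\qquad A=(1-x^p)^m(1-x)^{-pm}.
\]
Combining this identity with the refined leading-term formula, I would establish a relation of the shape
\[
d_m(p^2 n)-d_m(n)\equiv u\,p\,\Big(d_m\big(p^2\tfrac{n}{p}\big)-d_m\big(\tfrac{n}{p}\big)\Big)
\]
modulo a higher power of $p$, valid when $p\mid n$, with $u$ a fixed $p$-adic unit depending only on $m$ and $p$; an induction on $\nu_p(n)$ then transports the exact valuation from the base case $p\nmid n$ to general $n$, each factor of $p$ raising the valuation by exactly $1$. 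For the exceptional class $i=p-1$ the same scheme applies once the base case is replaced by the computation of the \emph{next} coefficient, and equality in the inequality for infinitely many $n$ would follow by prescribing the two lowest nonzero base-$p$ digits of $n$ so that this next coefficient is a unit, then propagating through the same recursion.

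The main obstacle is twofold. First, pushing the binomial congruences one power of $p$ past modulus $p^{2\nu_p(m)+3-[p=3]}$ is delicate: the cancellations exploited in Lemma \ref{parylem4} must be replaced by their exact leading Bernoulli contributions, and the anomalous behaviour at $p=3$ (responsible for the correction $[p=3]$) forces separate bookkeeping. Second, and more seriously, one must prove that the multiplier $u$ in the self-similar relation is a genuine \emph{unit}, so that the valuation grows by exactly $1$ rather than merely $\ge 1$ at each descent; verifying non-vanishing of the relevant leading coefficient---above all in the class $i=p-1$, where one is forced to descend a further digit---is the crux of the argument and the reason the statement is only conjectured here.
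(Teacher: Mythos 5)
This statement is one of the paper's \emph{conjectures} (Section \ref{sec6}): the authors offer no proof, only numerical evidence, and the strongest thing they actually establish in this direction is Theorem \ref{parythm3}, which yields $d_{m}(p^{2}n)\equiv d_{m}(n)\Mod{p^{\nu_{p}(m)+3}}$ --- visibly weaker than the conjectured exact valuation $\nu_{p}(n)+2\nu_{p}(m)+3-[p=3]$. Your proposal is likewise not a proof but a program, and you concede as much at the end: the sharpened Bernoulli-level binomial congruences, the explicit leading coefficient of $\vartheta_{m}(n)$, the unit-ness of the descent multiplier $u$, and the entire analysis of the exceptional class $i=p-1$ are all left as open tasks, and these are precisely the points where the difficulty lives. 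So there is nothing in the paper your sketch could be matched against, and the sketch itself does not close the conjecture.

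Beyond that, your opening quantitative claim is already unjustified. You assert that both $\vartheta_{m}(pn)$ and $\vartheta_{m}(n)$ are divisible by $p^{2\nu_{p}(m)+3-[p=3]}$ ``by Lemma \ref{modp}''. But Lemma \ref{modp} gives this divisibility only for $\vartheta_{m}(1)$; for general $n$ it gives the convolution congruence
\begin{equation*}
\sum_{i=0}^{\min\{m,n\}}(-1)^{i}\binom{m}{i}\vartheta_{m}(n-i)\equiv (-1)^{n+1}\left[\binom{pm}{pn}-\binom{m}{n}\right]\Mod{p^{2\nu_{p}(m)+3-[p=3]}},
\end{equation*}
and pushing the divisibility through this recurrence by induction would require $\nu_{p}\bigl(\binom{pm}{pn}-\binom{m}{n}\bigr)\geq 2\nu_{p}(m)+3-[p=3]$ for all relevant $n$. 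Lemma \ref{parylem4} only guarantees $\nu_{p}(m)+\nu_{p}\bigl(\binom{m}{n}\bigr)+3-\chi$, and $\nu_{p}\bigl(\binom{m}{n}\bigr)\geq\nu_{p}(m)$ can fail: for $m=p^{2}$ and $n=p$ one has $\nu_{p}\bigl(\binom{p^{2}}{p}\bigr)=1<2=\nu_{p}(m)$. This is exactly why the authors, who possess Lemmas \ref{parylem4} and \ref{modp}, could only extract the modulus $p^{\nu_{p}(m)+3}$ in Theorem \ref{parythm3}; the stronger uniform divisibility you start from is part of what is being conjectured, not a known input. The structural ideas in your sketch (decomposing $d_{m}(p^{2}n)-d_{m}(n)=\vartheta_{m}(pn)+\vartheta_{m}(n)$, refining the Wolstenholme cancellations to their leading Bernoulli terms, and a self-similar descent in $\nu_{p}(n)$ with a unit multiplier) are a sensible plan of attack, but as written the argument has a false first step and unproven core steps, so it does not constitute a proof of the statement --- which, to date, neither you nor the paper has.
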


\bigskip

In the light of results obtained in the previous section it is quite natural to ask about existence of pairs $m\in\N_{+}$ and $k\in\N_{\geq 3}$ such that the following congruence
\begin{equation}\label{speccong}
d_{m}(pn)\equiv d_{m}(n)\Mod{p^{\nu_{2}(m)+k}}
\end{equation}
holds. We performed numerical search for values of $m\in\{2,\ldots,100\}$ such that the congruence (\ref{speccong}) is satisfied for $k=5$ or $k=6$, $p\in\{3, 5, \ldots, 29\}$ and all $n\leq 10^3$. For $k=5$ we found the following pairs
\begin{align*}
(p,m)= & (3,26), (3,27), (3,53), (3,54), (3,80), (3,81), (5,24), (5,25), (5,49), (5,50), \\
& (5,74), (5,75), (5,99), (5,100), (7,48), (7,49), (7,97), (7,98).
\end{align*}
Among the above pairs only the pairs $(p,m)=(3,80), (3,81)$ extend to solutions of (\ref{speccong}) with $k=6$.

We formulate the following general
\begin{prob}
For which triples $(p, m, k)\in\mathbb{P}_{\geq 3}\times \N_{+}\times \N_{\geq 3}$ the congruence {\rm (\ref{speccong})} holds for all $n\in\N_{+}$?
\end{prob}

\bigskip

Let $p\in\mathbb{P}$ and for $n\in\N_{+}$ write $n=\sum_{i=0}^{k}\eps_{i}p^{i}$, where $\eps_{i}\in\{0,1,\ldots,p-1\}$ and $k\leq \log_{p} n$. This representation is just the (unique) $p$-ary expansion of $n$ in base $p$. Let us observe that the equality $\nu_{p}(n)=u$ implies $\eps_{0}=\ldots=\eps_{u-1}=0$ and $\eps_{u}\neq 0$ in the above representation. Thus, if $m\in\Z\setminus\{-1\}$ is fixed, our results concerning the exact value of $\nu_{2}(c_{m}(n))$ and $\nu_{p}(d_{m}(n))$ for $p\in\mathbb{P}_{\geq 3}$ given in Theorem \ref{mainthm} and Theorems \ref{parythm1} and \ref{parythm2} respectively, imply that the set of values of the numbers of trailing zeros in the binary expansion of $c_{m}(n)$ and $p$-ary expansion of $d_{m}(n)), n\in\N_{+}$, is bounded. This observation suggests the question whether the index of the next non-zero digit in the binary expansion in $c_{m}(n)$ or $p$-ary expansion of $d_{m}(n)$ is in bounded distance from the first one. We state this in equivalent form as the following

\begin{ques}
Let $m\in\N_{+}$ and write
$$
u_{m}(n)=\nu_{2}\left(\frac{c_{m}(n)}{2^{\nu_{2}(c_{m}(n))}}-1\right),\quad v_{p,m}(n)=\nu_{p}\left(\frac{d_{m}(n)}{p^{\nu_{p}(d_{m}(n))}}-\left[\frac{d_{m}(n)}{p^{\nu_{p}(d_{m}(n))}}\Mod{p}\right]\right),
$$
where $p\in\mathbb{P}_{\geq 3}$. Does there exist $m\in\N_{+}$ such that some of the sequences
$(u_{m}(n))_{n\in\N_{+}}, (v_{p,m}(n))_{n\in\N_{+}}$ has finite set of values?
\end{ques}

In order to see what is going on, we performed numerical computations for $m\in\{1,\ldots, 100\}$ and $n\leq 10^5$. We observed that in the considered range there are many values of $m$ such that the set of values of the sequence $(u_{m}(n))_{n\in\N}$ is small (we looked for sets with cardinality $\leq 4$). We define:
$$
M_{m}(x):=\op{max}\{u_{m}(n):\;n\leq x\},\quad L_{m}(x):=|\{u_{m}(n):\;n\leq x\}|.
$$
In the table below we present the results of our computations.
\begin{equation*}
\begin{array}{|l|l|l|l|l|l|l|l|l|l|l|l|l|l|l|l|}
 \hline
  m          & 3 & 15 & 23 & 27 & 35 & 39 & 47 & 59 & 63 & 67 & 79 & 87 & 91 & 95 & 99\\
  \hline
  M_{m}(10^5)& 2 &  4 &  3 &  2 &  2 &  3 &  4 &  2 &  6 &  2 &  4 &  3 &  2 &  5 &  2 \\
  \hline
  L_{m}(10^5)& 2 &  3 &  3 &  2 &  2 &  3 &  3 &  2 &  4 &  2 &  3 &  3 &  2 &  3 &  2 \\
  \hline
\end{array}
\end{equation*}
\begin{center}
Table 1. Values of $m\in\{1,\ldots, 100\}$ such that $L_{m}(10^5)\leq 4$ together with the largest value of $u_{m}(n)$ for $n\leq 10^5$.
\end{center}

Our numerical computations strongly suggest that there should be infinitely many $m\in\Z$ such that the sequence $(u_{m}(n))_{n\in\N}$ is bounded. We even dare to formulate the following

\begin{conj}
Let $k\in\N_{+}$ and $m=2^{2k}-1$. Then the sequence $(u_{m}(n))_{n\in\N}$ is bounded.
\end{conj}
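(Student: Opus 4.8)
The plan is to exploit the factorization $C_{m}(x)=C_{2^{2k}}(x)C_{-1}(x)$, valid since $m+1=2^{2k}$, together with the fact that both factors are completely understood modulo high powers of $2$. By Theorem \ref{mainthm} (even case) we have $\nu_{2}(c_{2^{2k}}(i))=\nu_{2}(2^{2k})+1=2k+1$ for $i\geq 1$, so I may write $c_{2^{2k}}(i)=2^{2k+1}e(i)$ with $e(i)$ odd, while $c_{2^{2k}}(0)=1$; on the other hand $c_{-1}(0)=1$ and $c_{-1}(j)=t_{j}-t_{j-1}\in\{-2,0,2\}$ for $j\geq1$. Comparing coefficients in the product, I would record for $n\geq1$ the identity
\begin{equation*}
c_{m}(n)=c_{-1}(n)+2^{2k+1}e(n)+2^{2k+2}S(n),\qquad S(n):=\sum_{i=1}^{n-1}e(i)\tfrac{c_{-1}(n-i)}{2}\in\Z,
\end{equation*}
the point being that every mixed term $c_{2^{2k}}(i)c_{-1}(n-i)$ with $1\leq i\leq n-1$ carries a factor $2^{2k+1}\cdot(\pm2\text{ or }0)$.

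This identity settles at once the ``generic'' indices. If $t_{n}\neq t_{n-1}$, then $c_{-1}(n)=\pm2$ and $\nu_{2}(c_{m}(n))=1$, so the odd part of $c_{m}(n)$ is $w_{n}=\pm1+2^{2k}\bigl(e(n)+2S(n)\bigr)$, where $e(n)+2S(n)$ is odd. Hence $w_{n}-1$ equals $2^{2k}\cdot(\text{odd})$ when the sign is $+$, and $-2+2^{2k}\cdot(\text{odd})$ when the sign is $-$; in the two cases $u_{m}(n)=2k$ and $u_{m}(n)=1$ respectively. Thus for every index with $t_{n}\neq t_{n-1}$ we get $u_{m}(n)\in\{1,2k\}$, which already establishes the conjectured bound on this set of indices (and pins the maximal value at $2k$, matching $M_{3}(10^{5})=2$, $M_{15}(10^{5})=4$, $M_{63}(10^{5})=6$ in Table 1). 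As a bonus the odd part is never $1$ here, so these $u_{m}(n)$ are finite.

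The remaining, and genuinely hard, case is $t_{n}=t_{n-1}$, where $c_{-1}(n)=0$, $\nu_{2}(c_{m}(n))=2k+1$, and the odd part of $c_{m}(n)$ is $w_{n}=e(n)+2S(n)$, so that $u_{m}(n)=\nu_{2}\bigl((e(n)-1)+2S(n)\bigr)$. Here one must rule out a large cancellation between $e(n)-1$ (whose valuation is the analogous quantity $u_{2^{2k}}(n)$) and the correction $2S(n)$. I would attack this by analysing $e(n)$, the odd part of $c_{2^{2k}}(n)$, through the Mahler equation $C_{2^{2k}}(x)=\bigl(1+2\tfrac{x}{1-x}\bigr)^{2^{2k}}C_{2^{2k}}(x^{2})$: Kummer's theorem gives $\nu_{2}\bigl(\binom{2^{2k}}{i}2^{i}\bigr)=2k+(i-\nu_{2}(i))\geq 2k+1$, with equality only for $i\in\{1,2\}$, so modulo a fixed power of $2$ the series $C_{2^{2k}}$ obeys a short, explicit self-similar recursion descending $n\mapsto\lfloor n/2\rfloor$. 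Combining this with the parallel descent for $S(n)$ coming from $C_{m}(x)=R(x)C_{m}(x^{2})$, $R(x)=\sum_{i}\binom{m}{i}2^{i}x^{i}/(1-x)^{i}$ (all of whose coefficients $\rho_{s}$, $s\geq1$, have valuation exactly $1$ because every $\binom{m}{i}$ is odd for $m=2^{2k}-1$), should express $u_{m}(n)$ for case-B indices in terms of finitely many case-A values, for which the bound is already known.

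The main obstacle is precisely this case-B cancellation, and I expect two complementary routes. First, a uniform-in-$k$ route: set up the descent above as a self-map on a finite set of ``normalised states'' (odd parts modulo $2^{4k+2}$), reducing the claim $u_{m}(n)\leq 2k$ to a closure property of that map; the difficulty is to prove the closure simultaneously for all $k$, presumably by induction on $k$ using $C_{2^{2k}}(x)=C_{2^{2(k-1)}}(x)^{4}$. Second, for each fixed $k$ one can appeal to Becker's theorem, which makes $(c_{m}(n))_{n}$ a $2$-regular sequence, whence $(c_{m}(n)\bmod 2^{4k+2})_{n}$ is $2$-automatic by \cite{AllSh}; the bound $u_{m}(n)\leq 2k$ then reduces to a finite check over the (finite) $2$-kernel, but this argument is not uniform in $k$ and so proves the conjecture only for individual values of $k$. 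Turning the finite, $k$-by-$k$ verification into the uniform statement is the crux.
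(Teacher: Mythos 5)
First, note that the statement you are attacking is posed in the paper only as a conjecture, supported by numerical evidence; the paper contains no proof of it, so your argument has to stand entirely on its own --- and it does not. What you do prove is correct and clean: the factorization $C_{m}(x)=C_{2^{2k}}(x)C_{-1}(x)$, the identity $c_{m}(n)=c_{-1}(n)+2^{2k+1}e(n)+2^{2k+2}S(n)$, and the full treatment of the indices with $t_{n}\neq t_{n-1}$, where $c_{-1}(n)=\pm 2$ forces $\nu_{2}(c_{m}(n))=1$ and your computation gives $u_{m}(n)\in\{1,2k\}$, consistent with Table 1 and with the bound $u_{m}(n)\leq 2k$ the authors expect. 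But this covers only part of the indices. On the set where $t_{n}=t_{n-1}$ --- which is infinite (it has density $2/3$, since $t_{n}\neq t_{n-1}$ exactly when $\nu_{2}(n+1)$ is odd) --- you have $u_{m}(n)=\nu_{2}\bigl((e(n)-1)+2S(n)\bigr)$, and nothing in your argument bounds this quantity: you must exclude arbitrarily deep cancellation between $e(n)-1$ and $2S(n)$, and you must also exclude $e(n)+2S(n)=1$ outright, since that would give $u_{m}(n)=+\infty$ and destroy boundedness. You name this case as ``the crux'' yourself; everything you offer for it is a programme (Mahler descent, normalised states, induction on $k$), not a proof.

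Moreover, the one concrete tool you invoke for the hard case is flawed. Becker's theorem cannot make $(c_{m}(n))_{n}$ a $2$-regular sequence: $2$-regular integer sequences grow at most polynomially, whereas $c_{m}(n)\geq b(n)$ for $m\geq 1$, and the binary partition function grows super-polynomially, so $(c_{m}(n))_{n}$ is not $2$-regular; accordingly, the Mahler equation $(1-x)^{m}C_{m}(x)=(1+x)^{m}C_{m}(x^{2})$ has a rational, non-polynomial coefficient $\bigl(\tfrac{1+x}{1-x}\bigr)^{m}$, which is exactly the situation Becker's theorem does not cover. The automaticity of $(c_{m}(n)\bmod 2^{4k+2})_{n}$ may well be provable directly from the functional equation reduced modulo $2^{4k+2}$, but that argument would have to be supplied, the resulting finite kernel check has not been carried out for any single $k$, and even then you would face the uniformity-in-$k$ obstruction you yourself identify. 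So the proposal establishes the conjectured bound on the indices with $t_{n}\neq t_{n-1}$ and leaves the conjecture open elsewhere; it is a genuine partial result, not a proof.
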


In fact, we expect that for $n\in\N$ the inequality $u_{2^{2k}-1}(n)\leq 2k$ is true.

In case of $p\in\mathbb{P}_{\geq 3}$ we expect that for all $m\in\N_{+}$ the sequence $(v_{p,m}(n))_{n\in\N_{+}}$ is unbounded.

\bigskip

Finally, let us note that in earlier sections we proved results concerning the $p$-adic behaviour of the sequence $(d_{m}(n))_{n\in\N}$ with $p\in\mathbb{P}$ and $m\in\Z\setminus\{0\}$. A first question which comes to mind is whether anything similar can be proved for the sequence $(S_{k,m}(n))_{n\in\N}$, where $k$ is not a prime number. Here we back to our general definition of $S_{k,m}(n)$ as the $n$-th coefficient in the power series expansion of $H_{k,m}(x)$. We thus ask about the behaviour of $\phi_{k}(S_{k,m}(n))$, where $\phi_{k}:\;\N\rightarrow \N$ is an analogue of the $p$-adic valuation function, i.e., for given $n\in\N$ we define
$$
\phi_{k}(n):=\max\{s\in\N:\;k^{s}\mid n\}\quad\mbox{and}\quad \phi_{k}(0):=+\infty.
$$
It is clear that if $k\not\in\mathbb{P}$, then the function $\phi_{k}$ is not additive. Indeed, if $k=k_{1}k_{2}$ with $k_{1}, k_{2}>1$ and $n_{1}=k_{1}u_{1}, n_{2}=k_{2}u_{2}$ with $\gcd(u_{1}u_{2},k)=1$, then $\phi_{k}(n_{1}n_{2})=1\neq \phi_{k}(n_{1})+\phi_{k}(n_{2})=0$. However, the question concerning the behaviour of the sequence $(\phi_{k}(S_{m,k}(n))_{n\in\N}$ is still interesting and non-trivial. Moreover, it seems that in the case of composite $k$ some new phenomena arise. We concentrate on the first non-trivial case, i.e., $k=4$, and formulate several conjectures.

\begin{conj}
\begin{itemize}
\item[(1)] If $m\equiv 6\Mod{8}$, then the sequence $(\phi_{4}(S_{4,m}(n)))_{n\in\N}$ is unbounded.
\item[(2)] If $m\not\equiv 6\Mod{8}$, then the sequence $(\phi_{4}(S_{4,m}(n)))_{n\in\N}$ is 4-automatic.
\item[(3)] If $a_{n}=\phi_{4}(S_{4,1}(n+1))$, then $a_{0}=a_{7}=1, a_{1}=a_{3}=0$ and
$$
a_{4n+2}=a_{4n},\quad a_{8n+5}=a_{8n+3}=a_{8n+1}=a_{4n+1},\quad a_{16n+7}=a_{2n},\quad a_{16n+15}=a_{n}.
$$
\item[(4)] If $b_{n}=\phi_{4}(S_{4,2}(n+1))$, then $b_{0}=b_{1}=1, b_{3}=2$ and
$$
b_{4n+2}=b_{4n+1}=b_{4n},\quad b_{8n+7}=b_{2n+1},\quad b_{16n+11}=b_{16n+3}=b_{8n+3}.
$$
\item[(5)] For $s\in\N_{\geq 2}$ we have
$$
\nu_{2}(S_{4,2^{s}}(n))=s+1+(\nu_{2}(2n)\Mod{2}).
$$
In particular,
$$
\phi_{4}(S_{4,2^{s}}(n))=\left\lfloor\frac{s+1}{2}\right\rfloor+\left((s+1)\nu_{2}(2n)\Mod{2}\right).
$$
\item[(6)] For $s\in\N_{\geq 3}$ and each $m\in\N$ we have
$$
\phi_{4}(S_{4,2^{s}m+2^{s-1}}(n))=\phi_{4}(S_{4,2^{s-1}}(n)).
$$
\end{itemize}
\end{conj}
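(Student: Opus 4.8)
The backbone of all six parts is the functional equation for $k=4$. Exactly as in the prime case one checks that $G_{4}(x)=\frac{1}{1-x}\prod_{n=0}^{\infty}(1-x^{4^{n}})^{-3}$ satisfies $(1-x)^{4}G_{4}(x)=(1-x^{4})G_{4}(x^{4})$, whence
\begin{equation*}
(1-x)^{4m}H_{4,m}(x)=(1-x^{4})^{m}H_{4,m}(x^{4}).
\end{equation*}
Comparing coefficients gives a recurrence for $S_{4,m}(n)$ completely analogous to Lemma \ref{recrel}, now with the two binomial rows coming from $(1-x)^{4m}$ and $(1-x^{4})^{m}$. The only structural difference from the prime case is that the clean identity $(1-x)^{p}\equiv 1-x^{p}\Mod{p}$ is replaced by $(1-x)^{4}=(1-x^{4})-2x(1-x)(x^{2}-x+2)$, so that $(1-x)^{4m}\equiv(1-x^{4})^{m}$ holds only modulo a controlled power of $2$ rather than modulo $2$ itself. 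Tracking this error term is what produces the extra $+1$ and the parity corrections seen throughout the conjecture.

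First I would settle (5) and (6), because they run parallel to Section \ref{sec3}. For $m=2^{s}$ one raises the functional equation and, by the freshman's dream for powers of $2$, gets $(1-x)^{4\cdot2^{s}}\equiv(1-x^{4})^{2^{s}}\Mod{2^{s+1}}$; the minimal contribution of the binomial expansion is reached at the $j=1,2$ terms and equals $s+1$. This yields the $k=4$ analogue of Lemma \ref{parylem1}, namely $H_{4,2^{s}}(x)\equiv H_{4,2^{s}}(x^{4})\Mod{2^{s+1}}$ and hence $\nu_{2}(S_{4,2^{s}}(n))\geq s+1$. Pushing the same computation one step further, to modulus $2^{s+2}$, separates odd $4$-free cores $n/4^{\phi_{4}(n)}$ from those $\equiv2\Mod{4}$: an odd core contributes one more factor of $2$, which is exactly the alternation recorded by $\nu_{2}(2n)\Mod{2}$ in (5). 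Part (6) then follows from (5) by the multiplicativity trick used at the end of Theorem \ref{parythm1}: writing $2^{s}m+2^{s-1}=2^{s-1}(2m+1)$ and expanding $H_{4,2^{s-1}(2m+1)}=H_{4,2^{s-1}}^{2m+1}$ as a convolution, every term with at least two nonzero arguments is killed modulo a high power of $2$ by the valuation bound just proved, leaving $S_{4,2^{s-1}(2m+1)}(n)\equiv(2m+1)S_{4,2^{s-1}}(n)$; since $2m+1$ is odd the $2$-adic valuation, and therefore $\phi_{4}$, is unchanged.

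Parts (3) and (4) are the explicit low-color cases $m=1,2$. Here I would not seek structural shortcuts but instead prove by induction a congruence for $S_{4,1}(n)$ and $S_{4,2}(n)$ modulo a power of $2$ large enough to determine $\phi_{4}$ (i.e. enough to pin down both $\nu_{2}$ and the leading nonzero $2$-adic digit), using the coefficient recurrence from the functional equation and splitting $n$ according to its residue modulo $4$, $8$ and $16$. The stated recurrences for $a_{n}=\phi_{4}(S_{4,1}(n+1))$ and $b_{n}=\phi_{4}(S_{4,2}(n+1))$ are then just a repackaging of these congruences; the appearance of moduli up to $16$ reflects that, $\phi_{4}$ being non-additive, the self-similarity of the valuation sequence lives on the $4$-kernel rather than on single steps $n\mapsto n/4$.

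The genuinely new difficulties are (1) and (2), and I expect (1) to be the main obstacle. Since $\phi_{4}(N)=\lfloor\nu_{2}(N)/2\rfloor$, unboundedness of $\phi_{4}(S_{4,m}(n))$ is equivalent to unboundedness of $\nu_{2}(S_{4,m}(n))$, which is precisely the behaviour that cannot occur in the prime case. For $m\equiv6\Mod{8}$ write $m=2(4t+3)$ and expand $H_{4,m}=H_{4,2}^{m/2}$ as a convolution of the (bounded-valuation, automatic) sequence $S_{4,2}$; the plan is to locate a subsequence of $n$ along which the leading convolution term and the cross terms cancel modulo increasing powers of $2$, so that the non-additivity of $\phi_{4}$ lets the valuations accumulate without bound. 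Controlling these cancellations exactly, rather than merely bounding them below, is the crux and the hardest step. For the complementary case (2), once the valuation sequence is known to be bounded, I would establish $4$-automaticity either by exhibiting a finite $4$-kernel directly from the recurrences of type (3)--(6), or, following the route of Theorem \ref{automaticthm1}, by writing down an algebraic equation over a finite field for the corresponding generating function and invoking Christol's theorem; the exclusion of $m\equiv6\Mod{8}$ is forced, since an unbounded integer sequence can never be automatic.
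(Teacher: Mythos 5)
This statement is not proved in the paper at all: it appears in Section \ref{sec6} as a \emph{conjecture}, supported only by the authors' numerical computations, so there is no paper proof to compare your attempt against. Your proposal must therefore stand on its own, and as it stands it is a research plan rather than a proof. The parts that are genuinely correct are: the functional equation $(1-x)^{4m}H_{4,m}(x)=(1-x^{4})^{m}H_{4,m}(x^{4})$, the identity $(1-x)^{4}-(1-x^{4})=-2x(1-x)(x^{2}-x+2)$, the lifting-the-exponent argument giving $(1-x)^{4\cdot 2^{s}}\equiv(1-x^{4})^{2^{s}}\Mod{2^{s+1}}$ and hence the \emph{lower} bound $\nu_{2}(S_{4,2^{s}}(n))\geq s+1$, the observation $\phi_{4}(N)=\lfloor\nu_{2}(N)/2\rfloor$, and the derivation of (6) from (5) via the convolution argument (this last step is sound, but only conditionally on (5)).

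The gaps are exactly at the points carrying the content of the conjecture. For (5), the lower bound is the easy half; the claim is an \emph{exact} valuation with the parity alternation $\nu_{2}(2n)\Mod{2}$, and your "pushing the same computation one step further" is an assertion, not an argument. In the prime case the exact valuation required Lemmas \ref{parylem2} and \ref{parylem3}, which rest on $p\mid\binom{pm}{i}$ for $p\nmid i$ and on Lucas' theorem; these have no mod-$4$ analogue precisely because $(1-x)^{4}\equiv 1-x^{4}$ holds only modulo $2$, and tracking the error term you identify is the whole difficulty — it is never done. For (1) you say yourself that "controlling these cancellations exactly \ldots is the crux and the hardest step"; proposing to locate a subsequence along which valuations accumulate is a hope, not a proof of unboundedness. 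For (2), boundedness of $(\nu_{2}(S_{4,m}(n)))_{n}$ for general $m\not\equiv 6\Mod{8}$ is established nowhere in your text, and even granted boundedness, automaticity does not follow: you would need to exhibit a finite $4$-kernel or an algebraic equation over $\F_{2}(x)$, and neither is produced (note also that the generating function of the valuation sequence, unlike $Y_{p}$ in Theorem \ref{automaticthm1}, has no evident Mahler-type equation, because $\phi_{4}$ of a sum of convolution terms is not controlled by $\phi_{4}$ of the summands). Parts (3) and (4) are likewise only described: the congruences "modulo a power of $2$ large enough" are never written down, and proving self-similarity relations such as $a_{16n+7}=a_{2n}$ requires relating valuations at widely separated indices, which is not a routine induction on the coefficient recurrence. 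In short, the proposal correctly identifies where the difficulties lie but resolves none of them; the statement remains open, as it does in the paper.
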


Our numerical calculations suggest that for $k=p^{a}$ and $m=p^{b}, a, b\in\N_{+}$, the sequence of the $p$-adic valuations of $S_{k,m}(n)$ is $p$-automatic (and thus bounded). However, we were unable to formulate such nice formula for the corresponding valuation like in the case $k=4$ and $m=2^{s}$.

Anyway one can state the following general

\begin{prob}
For given $k=p^{a}$, where $p\in\mathbb{P}$ and $a\in\N_{+}$ characterize those values of $m\in\N_{+}$ such that the sequence $(\nu_{p}(S_{k,m}(n)))_{n\in\N}$ is $p$-automatic (is bounded).
\end{prob}

Numerical computations suggest the following

\begin{conj}
\begin{enumerate}
 \item[(1)] If $k$ is not a power of a prime number, then for each $m\in\N_{+}$ the sequence $(\phi_{k}(S_{k,m}(n)))_{n\in\N}$ is unbounded.
 \item[(2)] If $k$ is not a power of a prime number, then for each $m\in\N_{+}$ the sequence $(\phi_{k}(S_{k,m}(n)))_{n\in\N}$ is not $k$-regular.
\end{enumerate}
\end{conj}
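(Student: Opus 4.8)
The plan is to exploit the functional equation
\begin{equation*}
(1-x)^{km}H_{k,m}(x)=(1-x^{k})^{m}H_{k,m}(x^{k}),
\end{equation*}
which follows at once from the product defining $H_{k,m}$ (it is the evident generalisation of the relations used for $k=p$ in Lemma \ref{recrel}), together with the identity $\phi_{k}(N)=\min_{p^{a}\|k}\lfloor\nu_{p}(N)/a\rfloor$. Since $k$ is not a prime power it has at least two distinct prime divisors, and the strategy for part (1) is first to show that each individual valuation $\nu_{p}(S_{k,m}(n))$, for $p\mid k$, is unbounded, and then to arrange that the indices $n$ realising large $\nu_{p}$ can be chosen \emph{simultaneously} for all $p\mid k$.

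For a fixed prime $p$ with $p^{a}\|k$ put $c=k/p^{a}$, so that $p\nmid c$ and $c>1$. Reducing the functional equation modulo $p$, using $(1-x)^{p^{a}}\equiv 1-x^{p^{a}}\Mod{p}$ and the factorisation $1-x^{k}=(1-x^{p^{a}})\sum_{j=0}^{c-1}x^{jp^{a}}$, I would obtain
\begin{equation*}
(1-x^{p^{a}})^{(c-1)m}H_{k,m}(x)\equiv\Big(\sum_{j=0}^{c-1}x^{jp^{a}}\Big)^{m}H_{k,m}(x^{k})\Mod{p}.
\end{equation*}
In the prime case $k=p$ one has $c=1$ and this collapses to $H(x)\equiv H(x^{p})\Mod{p}$, the self-similar relation ultimately responsible for the valuation being constant (Theorems \ref{parythm1} and \ref{parythm2}). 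For composite $k$ the factor $\big(\sum_{j}x^{jp^{a}}\big)^{m}$ is nontrivial and does not divide out, and it is precisely this cyclotomic-type factor that should drive the valuation upwards. The key quantitative sub-step is to iterate modulo $p^{2},p^{3},\dots$ (in the spirit of Lemmas \ref{parylem1}--\ref{parylem3}, but now \emph{without} the collapse) and to exhibit a family of $n$, specified by the shape of their base-$k$ expansions, along which $\nu_{p}(S_{k,m}(n))$ grows without bound.

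Granting the unboundedness of each $\nu_{p}$, the simultaneity required for $\phi_{k}$ should follow from a Chinese-Remainder argument at the level of base-$k$ digits: the digit patterns forcing $\nu_{p}(S_{k,m}(n))$ to be large for different primes $p\mid k$ impose essentially independent constraints, hence can be met together along an infinite set of $n$, on which $\phi_{k}(S_{k,m}(n))=\min_{p}\lfloor\nu_{p}/a\rfloor$ is then unbounded. For part (2) I would argue by contradiction: $k$-regularity forces the $\Z$-module generated by the $k$-kernel $\{(\phi_{k}(S_{k,m}(k^{i}n+j)))_{n}:i\in\N,\ 0\le j<k^{i}\}$ to be finitely generated; using the composite-$k$ analogue of the congruences for $S_{k,m}(kn)-S_{k,m}(n)$ (the $\Theta_{m}$ computation of Section \ref{sec5}) one would show that infinitely many of these subsequences are $\Z$-linearly independent, contradicting finite generation.

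The main obstacle is the quantitative unboundedness of a single $\nu_{p}$ for composite $k$: unlike the prime case, the mod-$p^{j}$ recursion no longer closes up into a self-similar form, so one must genuinely control the interaction between the $(1-x^{p^{a}})^{(c-1)m}$ and $\big(\sum_{j}x^{jp^{a}}\big)^{m}$ factors across all levels of the $k$-adic expansion. The simultaneity step in (1) and the infinite-rank step in (2) both rest on having such control. This is exactly the point at which the clean inductive machinery of Sections \ref{sec3}--\ref{sec5} breaks down, which is why the statement remains only conjectural.
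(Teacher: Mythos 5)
The statement you are addressing is not a theorem of the paper but its closing conjecture: the authors offer no proof whatsoever, only numerical evidence, so there is nothing to compare your argument against except the conjecture itself. Your text, as you concede in your final paragraph, is a research plan rather than a proof, and the load-bearing steps are all missing. The central gap is the unboundedness of a single $\nu_{p}(S_{k,m}(n))$ for $p\mid k$. Your algebra is correct: with $p^{a}\,\|\,k$ and $c=k/p^{a}$ one does get $(1-x^{p^{a}})^{(c-1)m}H_{k,m}(x)\equiv\big(\sum_{j=0}^{c-1}x^{jp^{a}}\big)^{m}H_{k,m}(x^{k})\Mod{p}$, and this collapses to the self-similar relation $H_{k,m}(x)\equiv H_{k,m}(x^{k})\Mod{p}$ exactly when $c=1$, i.e.\ exactly for prime powers --- a heuristic nicely consistent with the paper's conjectured dichotomy. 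But the observation that \emph{one particular proof mechanism} (the collapse exploited in Lemmas \ref{parylem1}--\ref{parylem3}) fails for composite $k$ is not a proof that the valuation is unbounded; a sequence of valuations can be bounded for reasons invisible to this congruence. You exhibit no family of $n$ along which $\nu_{p}(S_{k,m}(n))$ grows, and no mod-$p^{j}$ iteration that actually closes; "the cyclotomic factor should drive the valuation upwards" is precisely the unproven content of the conjecture.

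Two further gaps are equally serious. First, simultaneity: since $\phi_{k}(N)=\min_{p^{a}\|k}\lfloor\nu_{p}(N)/a\rfloor$ (this identity is correct), even granting that each $\nu_{p}$ is unbounded, the minimum of unbounded sequences can perfectly well be bounded --- large $\nu_{2}$ could occur only where $\nu_{3}$ is small. Your Chinese-Remainder argument "at the level of base-$k$ digits" presupposes that the locus of $n$ with $\nu_{p}$ large is cut out by digit conditions that are independent across the primes dividing $k$; for composite $k$ no such digit description exists, and producing one is again essentially the conjecture itself. Second, part (2) does not follow from part (1): $k$-regular sequences may be unbounded (only $k$-automaticity is excluded by unboundedness), so non-regularity needs a separate argument, and the "composite-$k$ analogue of the $\Theta_{m}$ computation" you invoke does not exist --- Section \ref{sec5} is specific to $k=p$ and its proofs use Lucas- and Wolstenholme-type congruences that have no evident analogue when $k$ has two prime factors. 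In short, your proposal correctly identifies where the difficulty lies, in agreement with the authors' own assessment, but it proves neither part of the statement.
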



\vskip 1cm

\noindent Maciej Ulas, Jagiellonian University, Faculty of Mathematics and Computer Science, Institute of
Mathematics, {\L}ojasiewicza 6, 30-348 Krak\'ow, Poland; email:
maciej.ulas@uj.edu.pl

\bigskip

\noindent B{\l}a\.{z}ej \.{Z}mija, Jagiellonian University, Faculty of Mathematics and Computer Science, Institute of
Mathematics, {\L}ojasiewicza 6, 30-348 Krak\'ow, Poland; email:
blazej.zmija@im.uj.edu.pl

 \end{document}